\newtheorem{theorem}{Theorem}[section]
\newtheorem{lemma}[theorem]{Lemma}
\newtheorem{corollary}[theorem]{Corollary}
\newtheorem{proposition}[theorem]{Proposition}
\theoremstyle{definition}
\newtheorem{definition}[theorem]{Definition}
\numberwithin{equation}{section}
\newcommand*{\sheafhom}{\mathcal{H}\kern -.5pt om}
\newcommand*{\sheafext}{\mathcal{E}\kern -.5pt xt}
\newcommand*{\Cohdim}{\operatorname{cd}}
\title{Orlov's theorem over a quasiexcellent ring}
\author{Fei Peng}
\email{pengf2@student.unimelb.edu.au}
\address{School of Mathematics \& Statistics, The University of Melbourne, Parkville, VIC, 3010, Australia}
\date{\today}
\subjclass[2020]{Primary 14F06, 14F08; Secondary 14A20.}
\keywords{Derived categories, algebraic stacks, Fourier--Mukai transforms, quasiecellent rings.}
\begin{document}

\begin{abstract}
    Following the approach of Kawamata and Canonaco--Stellari, we establish Orlov's representability theorem for smooth tame Deligne--Mumford stacks with projective coarse moduli spaces over a quasiexcellent ring of finite Krull dimension. This generalizes a previous result of Canonaco--Stellari for smooth projective varieties over a field.
\end{abstract}

\maketitle
\allowdisplaybreaks
\pagenumbering{arabic}

\section{Introduction}\label{I}

Let $X$ and $Y$ be smooth projective varieties over a field $k$, and let $F\colon D_{\operatorname{Coh}}^{b}(X)\to D_{\operatorname{Coh}}^{b}(Y)$ be a $k$-linear exact functor. We say $F$ that is a Fourier--Mukai transform if there exists a complex $K\in D_{\operatorname{Coh}}^{b}(X\times_{k}Y)$ such that $F\cong Rp_{2,*}(Lp_{1}^{*}(-)\otimes_{\mathcal{O}_{X\times_{k}Y}}^{\mathbb{L}}K)$, where $p\colon X\times_{k}Y\to Y$ and $q\colon X\times_{k}Y\to X$ are the canonical projections. A fundamental theorem of Orlov \cite[Theorem\ 2.2]{Orl97} states that if $F$ is fully faithful, then it is a Fourier-Mukai transform. Over the years, Orlov's theorem has been improved in several directions \cite{Kaw04, CS07, Bal09, LO10, CS14, Ola24, Pen24}. However, all the results listed above are stated over a field.

The purpose of this short note is to establish, with some mild assumptions, Orlov's theorem for tame algebraic stacks with projective coarse moduli spaces over a quasiexcellent ring. This partially answers a question of Neeman.

\begin{theorem}\label{1.1}
Let $R$ be a quasiexcellent ring of finite Krull dimension. Let $\mathcal{X},\mathcal{Y}$ be smooth proper tame algebraic stacks over $R$. Let $\pi\colon\mathcal{X}\to X$ be the associated coarse moduli space. Let $F\colon D_{\operatorname{\operatorname{Coh}}}^{b}(\mathcal{X})\to D_{\operatorname{Coh}}^{b}(\mathcal{Y})$ be an $R$-linear exact functor such that$$\operatorname{Hom}_{\mathcal{O}_{\mathcal{Y}}}(F(\mathcal{A}),F(\mathcal{B})[i])=0$$for all $\mathcal{A},\mathcal{B}\in\operatorname{Coh}(\mathcal{X})$ and $i<0$. If $\mathcal{X}$ has the resolution property and $X$ admits an ample line bundle, then there exists a bounded complex $K\in D_{\operatorname{Coh}}^{b}(\mathcal{X}\times_{R}\mathcal{Y})$ such that $F\cong\Phi_{K}$, where $\Phi_{K}=Rp_{*}(Lq^{*}(-)\otimes_{\mathcal{O}_{X\times_{R}Y}}^{\mathbb{L}}K)$ and $p\colon\mathcal{X}\times_{R}\mathcal{Y}\to\mathcal{Y}$ and $q\colon\mathcal{X}\times_{R}\mathcal{Y}\to\mathcal{X}$ are the canonical projections. Moreover, the complex $K$ is unique up to quasi-isomorphism.
\end{theorem}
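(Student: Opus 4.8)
The plan is to follow the ample-sequence strategy of Kawamata \cite{Kaw04} and Canonaco--Stellari \cite{CS14}, using the coarse moduli space $\pi\colon\mathcal{X}\to X$ and the resolution property to transplant it to the stacky setting, and using the quasiexcellence and finite Krull dimension of $R$ to control boundedness of the resulting kernel. These hypotheses on $R$ are used to ensure that $D^{b}_{\operatorname{Coh}}(\mathcal{X})$, $D^{b}_{\operatorname{Coh}}(\mathcal{Y})$ and $D^{b}_{\operatorname{Coh}}(\mathcal{X}\times_{R}\mathcal{Y})$ admit strong generators and have graded $\operatorname{Hom}$-modules whose pieces $\operatorname{Ext}^{i}$ are finitely generated over $R$; smoothness and properness then give that each of these categories coincides with its category of perfect complexes, so that each stack in play is perfect in the sense of Ben-Zvi--Francis--Nadler \cite{BZFN10}.

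First I would build an ample sequence on $\mathcal{X}$. Since $\mathcal{X}$ has the resolution property it is a quotient stack and carries a vector bundle $\mathcal{G}$ generating $D^{b}_{\operatorname{Coh}}(\mathcal{X})$; fixing an ample line bundle $L$ on $X$, set $\mathcal{E}_{i}:=\mathcal{G}\otimes_{\mathcal{O}_{\mathcal{X}}}\pi^{*}L^{\otimes i}$. Because $\mathcal{X}$ is tame, $R\pi_{*}$ is exact and $R\Gamma(\mathcal{X},-)\simeq R\Gamma(X,\pi_{*}(-))$; combining this with Serre vanishing for $L$ on the projective $R$-scheme $X$ shows that $(\mathcal{E}_{i})_{i\to-\infty}$ is an ample sequence in $\operatorname{Coh}(\mathcal{X})$ in the sense of Canonaco--Stellari --- every coherent sheaf is a quotient of a finite direct sum of the $\mathcal{E}_{i}$, one has $\operatorname{Ext}^{>0}_{\mathcal{X}}(\mathcal{E}_{i},\mathcal{F})=0$ for $i\ll 0$, and $\operatorname{Hom}_{\mathcal{X}}(\mathcal{E}_{i},-)$ is conservative for $i\ll 0$. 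In particular the $\mathcal{E}_{i}$ classically generate $D^{b}_{\operatorname{Coh}}(\mathcal{X})$, and passing to the canonical (unique) enhancement, the differential graded $R$-category $\mathsf{A}$ with objects $(\mathcal{E}_{i})$ and morphism complexes $R\!\operatorname{Hom}_{\mathcal{X}}(\mathcal{E}_{i},\mathcal{E}_{j})$ is smooth and proper over $R$, with $\operatorname{Perf}(\mathsf{A})\xrightarrow{\ \sim\ }D^{b}_{\operatorname{Coh}}(\mathcal{X})$.

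Next I would realize $F$ as a kernel. By the stacky, relative-over-$R$ form of the uniqueness of enhancements --- applicable because the resolution property, respectively the compact generation of $D_{\operatorname{QCoh}}(\mathcal{X})$ and $D_{\operatorname{QCoh}}(\mathcal{Y})$, provides compact generators --- all the categories involved have unique enhancements. The decisive point is that the hypothesis $\operatorname{Hom}_{\mathcal{O}_{\mathcal{Y}}}(F(\mathcal{A}),F(\mathcal{B})[i])=0$ for $i<0$ makes the a priori obstructed problem of lifting $F$ to an enhanced ($A_{\infty}$ or differential graded) functor unobstructed on the subcategory generated by the ample sequence: the relevant obstruction and ambiguity classes lie in groups of the form $\operatorname{Hom}_{\mathcal{O}_{\mathcal{Y}}}(F(\mathcal{E}_{a}),F(\mathcal{E}_{b})[i])$ with $i<0$, which all vanish. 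A lift of $F$ to $\mathsf{A}$ is the same datum as an $\mathsf{A}$-$\mathcal{O}_{\mathcal{Y}}$-bimodule $M\in D(\mathsf{A}^{\operatorname{op}}\otimes_{R}\mathcal{O}_{\mathcal{Y}})$; transporting $M$ along $\operatorname{Perf}(\mathsf{A})\simeq D^{b}_{\operatorname{Coh}}(\mathcal{X})$ and the ensuing identification of bimodule categories with $D_{\operatorname{QCoh}}(\mathcal{X}\times_{R}\mathcal{Y})$ (the integral transform formalism of \cite{BZFN10}), $M$ becomes an object $K\in D_{\operatorname{QCoh}}(\mathcal{X}\times_{R}\mathcal{Y})$ equipped with a functorial isomorphism $F\simeq\Phi_{K}$ on $D^{b}_{\operatorname{Coh}}(\mathcal{X})$.

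Finally I would prove that $K\in D^{b}_{\operatorname{Coh}}(\mathcal{X}\times_{R}\mathcal{Y})$ and that it is unique. For coherence and boundedness, for each point $x$ of $\mathcal{X}$ with residual gerbe $i_{x}\colon\mathcal{G}_{x}\hookrightarrow\mathcal{X}$ one identifies the derived restriction $L(i_{x}\times\operatorname{id}_{\mathcal{Y}})^{*}K$ with $F$ applied to an explicit bounded coherent complex on $\mathcal{X}$ set-theoretically supported at $x$ (pushed forward from $\mathcal{G}_{x}$), which therefore lies in $D^{b}_{\operatorname{Coh}}(\mathcal{Y})$; since $q\colon\mathcal{X}\times_{R}\mathcal{Y}\to\mathcal{X}$ is flat and proper and $\mathcal{X}$ is regular of finite Krull dimension over $R$, a uniform bound on the amplitude of these restrictions --- here the quasiexcellence and finite dimensionality of $R$ are used --- upgrades the pointwise coherence and boundedness to $K\in D^{b}_{\operatorname{Coh}}(\mathcal{X}\times_{R}\mathcal{Y})$. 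For uniqueness, an isomorphism $\Phi_{K}\simeq\Phi_{K'}$ restricts to an isomorphism of the induced $\mathsf{A}$-module structures on $\{\Phi_{K}(\mathcal{E}_{i})\}=\{\Phi_{K'}(\mathcal{E}_{i})\}$, so the corresponding bimodules agree and hence $K\simeq K'$. The step I expect to be the \emph{main obstacle} is precisely this coherence and boundedness of $K$: over a field it is immediate from smoothness and properness, whereas over a quasiexcellent $R$ of finite Krull dimension one must bound the $\operatorname{Ext}$- and $\operatorname{Tor}$-amplitude of the bimodule uniformly over the base, which is exactly what forces the hypotheses on $R$; a secondary difficulty is carrying out the verification of the ample-sequence axioms and the enhancement-lifting simultaneously for a stack rather than a scheme and over a general base $R$, for which the tameness of $\pi$ (reducing cohomology on $\mathcal{X}$ to the projective scheme $X$) and the resolution property are the essential inputs.
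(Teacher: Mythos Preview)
Your approach is genuinely different from the paper's. The paper never invokes dg-enhancements or the Ben-Zvi--Francis--Nadler formalism; instead it follows Kawamata and Canonaco--Stellari \cite{CS07} concretely: it uses the resolution property to resolve $\mathcal{O}_{\Delta_{\mathcal{X}}}$ by box products $\mathcal{E}_i\boxtimes\mathcal{G}_i$, forms the Postnikov system of brutal truncations $M_n$, applies $\mathrm{id}\boxtimes F$ termwise via K\"unneth to obtain a parallel Postnikov system on $\mathcal{X}\times_R\mathcal{Y}$ with convolutions $K_n$, and then shows (for $n$ large) that $K_n$ splits off a bounded piece $\overline{K}_n$ which agrees with $F$ on the ample sequence $\{\mathcal{V}\otimes\pi^*L^{\otimes -l}\}$; the extension to all of $D^b_{\mathrm{Coh}}(\mathcal{X})$ is then \cite[Proposition~3.7]{CS07}, which is where the left adjoint of $F$ (produced from strong generation over the quasiexcellent base) is needed. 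Quasiexcellence enters only via Lemma~\ref{5.1}: strong generation of $D^b_{\mathrm{Coh}}(\mathcal{X})$ forces $F$ to be bounded and to have a left adjoint. Uniqueness is proved by showing that any kernel $K$ must satisfy $K\simeq\tau^{\geq j}K_n$ for fixed $n\gg 0$, again via the Postnikov system. Your enhancement route is more conceptual and avoids the explicit diagonal resolution, but it hides the main difficulty in the phrase ``the obstruction and ambiguity classes lie in negative Ext groups'': making this precise over a general base $R$ and for stacks is essentially the content of the Lunts--Orlov argument and is not a one-line consequence of the vanishing hypothesis.

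There is also a genuine gap in your uniqueness argument. You assert that an isomorphism $\Phi_K\simeq\Phi_{K'}$ of \emph{triangulated} functors restricts to an isomorphism of the induced $\mathsf{A}$-module structures, hence of bimodules. But a natural isomorphism at the triangulated level only gives compatibility with $H^0$ of the morphism complexes, not with the higher $A_\infty$-operations encoding the $\mathsf{A}$-module structure; one would need an additional argument (again using the negative Ext vanishing, and essentially re-running the unobstructedness) to upgrade this to an isomorphism of enhanced modules. The paper sidesteps this entirely by its explicit identification $K\simeq\tau^{\geq j}K_n$. A smaller point: your verification of the ample-sequence axioms omits condition~(3) ($\operatorname{Hom}(\mathcal{F},\mathcal{E}_i)=0$ for $i\ll 0$), which over a general noetherian $R$ is not a consequence of Serre vanishing alone --- in the paper it requires a reduction to fibers via cohomology and base change (Lemma~\ref{3.5}); your enhancement approach may not need this axiom, but you should say so.
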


Theorem \ref{1.1} is even new for smooth projective varieties over $\mathbb{Z}$. Our method is a natural extension of the approach in \cite{Kaw04} and \cite{CS07}. One could obtain a variant of Theorem \ref{1.1} for twisted sheaves following \cite{CS07} as well. Quasiexcellence is necessary for us to show that $F$ admits a left adjoint and is bounded. Both statements are consequences of the strong generation of derived categories of coherent sheaves on tame stacks \cite[Theorem\ B.1]{HP24}. See Lemma \ref{5.1} for the details.
 
It is known that every smooth separated Deligne--Mumford stack with quasiprojective coarse moduli space over an equicharacteristic base has the resolution property \cite[Theorem\ 6.3]{BHM21}. In the appendix, we record a mixed characteristic variant of this result in the generically tame case, which we expect to be folklore among experts. As a consequence, we see that the assumption that $\mathcal{X}$ has the resolution property in Theorem \ref{1.1} would be superfluous if $\mathcal{X}$ is Deligne--Mumford.

\subsection*{Acknowledgement}
This project is supported by a Melbourne Research Scholarship. It is my pleasure to thank Amnon Neeman for asking whether Theorem \ref{1.1} holds and for his valuable insights and comments on an earlier draft of this article. I am also very grateful to my supervisor, Jack Hall, for his guidance, encouragement, and support throughout this project. I would also like to thank Oliver Li and Adam Monteleone for many helpful discussions.


\section{Resolutions of the diagonal}\label{2}

In this section, we produce resolutions of the diagonal for algebraic stacks with the resolution property. Let $\mathcal{X}$ be a quasicompact and quasiseparated algebraic stack. We say $\mathcal{X}$ has the \textit{resolution property} if every quasicoherent sheaf of finite type on $\mathcal{X}$ admits a surjective map from some vector bundle on $\mathcal{X}$. For example, every quasiprojective scheme has the resolution property. In general, it is very difficult to determine whether an algebraic stack has the resolution property. It is closely related to the global quotient structure of algebraic stacks and the surjectivity of the Brauer map. We refer to \cite{EHKV01}, \cite{tot04}, and \cite{gro17} for detailed discussions on their connections. The following result is a non-derived variant of \cite[Proposition\ 2.24]{BZFN10}, which we expect to be well-known.

\begin{proposition}\label{2.2}
Let $R$ be a ring. Let $\mathcal{X}$ and $\mathcal{Y}$ be algebraic stacks of finite type over $R$ with affine diagonal. If $\mathcal{X}$ and $\mathcal{Y}$ both have the resolution property, then any coherent sheaf $\mathcal{F}$ on $\mathcal{X}\times_{R}\mathcal{Y}$ admits a resolution of the following form
\begin{equation}
    \begin{tikzcd}
    \cdots\arrow[r] & \mathcal{E}_{i}\boxtimes\mathcal{G}_{i}\arrow[r] & \cdots\arrow[r] & \mathcal{E}_{0}\boxtimes\mathcal{G}_{0}\arrow[r] & \mathcal{F}\arrow[r] & 0,
    \end{tikzcd}
\end{equation}
where $\mathcal{E}_{i}$ and $\mathcal{G}_{i}$ are vector bundles on $\mathcal{X}$ and $\mathcal{Y}$ respectively for all $i\geq 0$.
\end{proposition}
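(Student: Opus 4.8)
The plan is to deduce the (possibly infinite) resolution in the statement from a single-step statement by iteration. Write $\mathcal{Z}:=\mathcal{X}\times_{R}\mathcal{Y}$ and let $q\colon\mathcal{Z}\to\mathcal{X}$, $p\colon\mathcal{Z}\to\mathcal{Y}$ be the projections. It suffices to prove that every coherent sheaf $\mathcal{F}$ on $\mathcal{Z}$ admits an epimorphism $\mathcal{E}\boxtimes\mathcal{G}\twoheadrightarrow\mathcal{F}$ with $\mathcal{E}$ a vector bundle on $\mathcal{X}$ and $\mathcal{G}$ one on $\mathcal{Y}$: the kernel is again coherent — here one uses that $\mathcal{Z}$ is locally Noetherian, which holds in the cases of interest since $R$ is quasiexcellent — so applying the statement to the successive kernels assembles the resolution. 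To produce such an epimorphism it is in turn enough to produce one of the form $\bigoplus_{i=1}^{n}q^{*}\mathcal{A}_{i}\otimes_{\mathcal{O}_{\mathcal{Z}}}p^{*}\mathcal{B}_{i}\twoheadrightarrow\mathcal{F}$ with $\mathcal{A}_{i}\in\operatorname{Coh}(\mathcal{X})$ and $\mathcal{B}_{i}\in\operatorname{Coh}(\mathcal{Y})$: by the resolution property choose vector bundle epimorphisms $\mathcal{E}_{i}\twoheadrightarrow\mathcal{A}_{i}$ and $\mathcal{G}_{i}\twoheadrightarrow\mathcal{B}_{i}$; right exactness of $q^{*}$, $p^{*}$ and $\otimes_{\mathcal{O}_{\mathcal{Z}}}$ gives an epimorphism from $\bigoplus_{i}\mathcal{E}_{i}\boxtimes\mathcal{G}_{i}$, which is a direct summand of the single external product $\bigl(\bigoplus_{i}\mathcal{E}_{i}\bigr)\boxtimes\bigl(\bigoplus_{i}\mathcal{G}_{i}\bigr)$.

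The real content is then the generation statement: $\operatorname{QCoh}(\mathcal{Z})$ is generated under colimits by the external tensor products $q^{*}\mathcal{A}\otimes_{\mathcal{O}_{\mathcal{Z}}}p^{*}\mathcal{B}$ of quasicoherent sheaves. Granting this, any coherent $\mathcal{F}$ is a quotient of a direct sum of such objects; since $\mathcal{F}$ is a Noetherian object of $\operatorname{QCoh}(\mathcal{Z})$ a finite sub-sum already surjects onto it, and writing the finitely many $\mathcal{A}$, $\mathcal{B}$ appearing as filtered colimits of their coherent subsheaves and using that $\mathcal{F}$ is of finite type and that $q^{*}$, $p^{*}$, $\otimes$ commute with filtered colimits, one may take the $\mathcal{A}_{i}$, $\mathcal{B}_{i}$ coherent, recovering the situation of the first paragraph. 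Conceptually this generation statement is the non-derived counterpart of \cite[Proposition~2.24]{BZFN10}: it asserts that $\operatorname{QCoh}$ sends the fibre product $\mathcal{X}\times_{R}\mathcal{Y}$ to the tensor product $\operatorname{QCoh}(\mathcal{X})\otimes_{\operatorname{Mod}_{R}}\operatorname{QCoh}(\mathcal{Y})$ of Grothendieck abelian categories, which is generated under colimits by pure tensors of generators — and generators of $\operatorname{QCoh}(\mathcal{X})$ and $\operatorname{QCoh}(\mathcal{Y})$ are supplied by the vector bundles, thanks to the resolution property.

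To prove the generation statement concretely I would pass through a quotient presentation. By \cite{Tot04} (see also \cite{Gro17}) one may write $\mathcal{X}=[U/\mathrm{GL}_{n}]$ and $\mathcal{Y}=[W/\mathrm{GL}_{m}]$ with $U$, $W$ quasi-affine of finite type over $R$, so $\mathcal{Z}=[P/(\mathrm{GL}_{n}\times_{R}\mathrm{GL}_{m})]$ with $P:=U\times_{R}W$ again quasi-affine. A quasicoherent sheaf on $\mathcal{Z}$ is a $\mathrm{GL}_{n}\times\mathrm{GL}_{m}$-equivariant quasicoherent sheaf $\mathcal{F}$ on $P$; as $P$ is quasi-affine, $\mathcal{F}$ is generated by its equivariant global sections, giving an equivariant epimorphism $V\otimes_{R}\mathcal{O}_{P}\twoheadrightarrow\mathcal{F}$ for a representation $V$ of $\mathrm{GL}_{n}\times\mathrm{GL}_{m}$. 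Here $V\otimes_{R}\mathcal{O}_{P}$ is the pullback of $V$ along $\rho\colon\mathcal{Z}\to B\mathrm{GL}_{n}\times_{R}B\mathrm{GL}_{m}$, and $\rho$ is the product of the classifying maps $\mathcal{X}\to B\mathrm{GL}_{n}$ and $\mathcal{Y}\to B\mathrm{GL}_{m}$, so $\rho^{*}$ carries external products to external products. Thus the generation statement for $\mathcal{Z}$ reduces to the same statement for $B\mathrm{GL}_{n}\times_{R}B\mathrm{GL}_{m}$, i.e. to the claim that the category of comodules over $R[\mathrm{GL}_{n}]\otimes_{R}R[\mathrm{GL}_{m}]$ is generated under colimits by external products of comodules over $R[\mathrm{GL}_{n}]$ and over $R[\mathrm{GL}_{m}]$.

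That last claim — equivalently, that every representation of $\mathrm{GL}_{n}\times\mathrm{GL}_{m}$ over $R$ is a quotient of a direct sum of external products of representations of the two factors — is the step I expect to be the main obstacle, precisely because $R$ is arbitrary and linear reductivity is unavailable, so one cannot split subobjects off and must instead argue with the cofree (regular) comodules and approximate by finite subobjects. Everything else is formal: the reductions above, absorbing finite direct sums of external products into single ones, the interaction of the various filtered colimits with pullback and tensor, and the iteration that builds the resolution; the only further point worth noting is the use of the Noetherian hypothesis on $\mathcal{Z}$ to keep kernels coherent and to replace infinite direct sums by finite ones, which is innocuous in our setting. Alternatively one can avoid the quotient presentation altogether and simply cite a description of $\operatorname{QCoh}$ of a fibre product of algebraic stacks with affine diagonal as a tensor product, whence the generation statement — and hence the proposition — follows immediately.
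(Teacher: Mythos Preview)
Your approach differs substantially from the paper's. The paper never invokes Totaro's quotient presentation or any representation theory of $\mathrm{GL}_n\times\mathrm{GL}_m$. Instead it reduces to showing that box products of vector bundles detect zero --- that is, if $\operatorname{Hom}(\mathcal{E}\boxtimes\mathcal{G},\mathcal{F})=0$ for all vector bundles $\mathcal{E}$ on $\mathcal{X}$ and $\mathcal{G}$ on $\mathcal{Y}$ then $\mathcal{F}=0$ --- and proves this by two applications of adjunction. First, rewriting the Hom as $\operatorname{Hom}_{\mathcal{X}}\bigl(\mathcal{E},q_*\sheafhom(p^*\mathcal{G},\mathcal{F})\bigr)$ and using the resolution property on $\mathcal{X}$ gives $q_*\sheafhom(p^*\mathcal{G},\mathcal{F})=0$. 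Second, pulling back along a smooth cover $\operatorname{Spec}A\to\mathcal{X}$ and noting that the induced projection $\operatorname{Spec}A\times_R\mathcal{Y}\to\mathcal{Y}$ is affine (so its pushforward is conservative), one combines with the resolution property on $\mathcal{Y}$ to conclude $\pi_{\mathcal{Y}}^*\mathcal{F}=0$ and hence $\mathcal{F}=0$. This is short and elementary, and it entirely bypasses the comodule statement you single out as the main obstacle.

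Your plan is more careful on one point the paper glosses over: the passage from ``detects zero'' to ``every coherent sheaf is a quotient of a box product'' is asserted in the paper without comment, whereas you explicitly set up generation under colimits, pass to finite sub-sums via Noetherianness, approximate by coherent subsheaves, and absorb finite direct sums into a single box product. That care is warranted. But having made that reduction, you then replace the paper's two-line adjunction by the harder claim that external products generate comodules over $R[\mathrm{GL}_n]\otimes_R R[\mathrm{GL}_m]$ under colimits --- a claim you do not prove and correctly flag as the crux. So your argument is left incomplete at exactly the place where the paper's is finished; the paper's direct adjunction-and-affine-cover method is the one to use.
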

\begin{proof}
We need to show if $\operatorname{Hom}_{\mathcal{O}_{\mathcal{X}\times_{R}\mathcal{Y}}}(\mathcal{E}\boxtimes\mathcal{G},\mathcal{F})\cong0$ for every pair of vector bundles $\mathcal{E}$ on $\mathcal{X}$ and $\mathcal{G}$ on $\mathcal{Y}$, then $\mathcal{F}\cong0$. By adjunction, we have
\begin{align*}
    0=&\operatorname{Hom}_{\mathcal{O}_{\mathcal{X}\times_{R}\mathcal{Y}}}(\mathcal{E}\boxtimes\mathcal{G},\mathcal{F})\\
    \cong &\operatorname{Hom}_{\mathcal{O}_{\mathcal{X}\times_{R}\mathcal{Y}}}(q^{*}\mathcal{E},\sheafhom_{\mathcal{O}_{\mathcal{X}\times_{R}\mathcal{Y}}}(p^{*}\mathcal{G},\mathcal{F}))\\
    \cong & \operatorname{Hom}_{\mathcal{O}_{\mathcal{X}}}(\mathcal{E},q_{*}\sheafhom_{\mathcal{O}_{\mathcal{X}\times_{R}\mathcal{Y}}}(p^{*}\mathcal{G},\mathcal{F})).
\end{align*}
Since this is true for every $\mathcal{E}$ on $\mathcal{X}$, we have $q_{*}\sheafhom_{\mathcal{O}_{\mathcal{X}\times_{R}\mathcal{Y}}}(p^{*}\mathcal{E},\mathcal{F})=0$.

By assumption, $\mathcal{X}$ admits a smooth cover $\pi\colon\operatorname{Spec}(A)\to\mathcal{X}$ of $\mathcal{X}$ by an affine scheme. Therefore we have the following diagram
\begin{equation*}
    \begin{tikzcd}
    \operatorname{Spec}(A)\times_{R}\mathcal{Y}\arrow[r,"q_{A}"]\arrow[d,"\pi_{\mathcal{Y}}"]\arrow[dd,bend right=60,"\tilde{p}"] & \operatorname{Spec}(A)\arrow[d,"\pi"]\\
    \mathcal{X}\times_{R}\mathcal{Y}\arrow[r,"q"]\arrow[d,"p"] & \mathcal{X}\arrow[d]\\
    \mathcal{Y}\arrow[r] & \operatorname{Spec}(R)
    \end{tikzcd}
\end{equation*}
where the upper and lower square are both cartesian and $\tilde{p}=p\circ\pi_{\mathcal{Y}}$. By construction, the composition $\tilde{p}$ is affine. Since $q_{*}\sheafhom_{\mathcal{O}_{\mathcal{X}\times_{R}\mathcal{Y}}}(p^{*}\mathcal{E},\mathcal{F})=0$, we have
\begin{align*}
    0\cong& \Gamma(\operatorname{Spec}(A),\pi^{*}q_{*}\sheafhom_{\mathcal{O}_{\mathcal{X}\times_{R}\mathcal{Y}}}(p^{*}\mathcal{E},\mathcal{F}))\\
    \cong& \Gamma(\operatorname{Spec}(A), q_{A,*}\pi_{\mathcal{Y}}^{*}\sheafhom_{\mathcal{O}_{\mathcal{X}\times_{R}\mathcal{Y}}}(p^{*}\mathcal{E},\mathcal{F}))\\
    \cong& \Gamma(\operatorname{Spec}(A)\times_{R}\mathcal{Y},\sheafhom_{\mathcal{O}_{\operatorname{Spec}(A)\times_{R}\mathcal{Y}}}(\tilde{p}^{*}\mathcal{E},\pi_{\mathcal{Y}}^{*}\mathcal{F}))\\
    \cong& \Gamma(\mathcal{Y},\tilde{p}_{*}\sheafhom_{\mathcal{O}_{\operatorname{Spec}(A)\times_{R}\mathcal{Y}}}(\tilde{p}^{*}\mathcal{E},\pi_{\mathcal{Y}}^{*}\mathcal{F}))\\
    \cong& \Gamma(\mathcal{Y},\sheafhom_{\mathcal{O}_{\mathcal{Y}}}(\mathcal{E},\tilde{p}_{*}\pi_{\mathcal{Y}}^{*}\mathcal{F}))\\
    \cong& \operatorname{Hom}_{\mathcal{O}_{\mathcal{Y}}}(\mathcal{E},\tilde{p}_{*}\pi_{\mathcal{Y}}^{*}\mathcal{F}).
\end{align*}
It follows that $\tilde{p}_{*}\pi_{\mathcal{Y}}^{*}\mathcal{F}\cong 0$. Since $\tilde{p}$ is affine, $\tilde{p}_{*}$ is conservative, and thus $\pi_{\mathcal{Y}}^{*}\mathcal{F}\cong 0$. But $\pi_{\mathcal{Y}}$ is faithfully flat, so we must have $\mathcal{F}=0$ as desired.
\end{proof}
In the special case where $\mathcal{X}$ and $\mathcal{Y}$ are projective varieties, Proposition \ref{2.2} implies that every box product of ample line bundles on $\mathcal{X}$ and $\mathcal{Y}$ is again ample on $\mathcal{X}\times_{R}\mathcal{Y}$, which is usually proved using the Segre embedding. An immediate consequence of Proposition \ref{2.2} is the desired resolution of the diagonal for $\mathcal{X}$.
\begin{corollary}\label{2.3}
Let $\mathcal{X}$ be a separated algebraic stack of finite type over $R$ with the resolution property. Then the sheaf $\mathcal{O}_{\Delta_{\mathcal{X}}}$ on $\mathcal{X}\times_{k}\mathcal{X}$ admits a resolution of the following form
\begin{equation}\label{eq2.3}
    \begin{tikzcd}
    \cdots\arrow[r] & \mathcal{E}_{i}\boxtimes\mathcal{G}_{i}\arrow[r] & \cdots\arrow[r] & \mathcal{E}_{0}\boxtimes\mathcal{G}_{0}\arrow[r] & \mathcal{O}_{\Delta_{\mathcal{X}}}\arrow[r] & 0,
    \end{tikzcd}
\end{equation}
where $\mathcal{E}_{i}$ and $\mathcal{G}_{i}$ are finite locally free sheaves on $\mathcal{X}$ for all $i\geq 0$.
\end{corollary}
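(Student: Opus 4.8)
The plan is to apply Proposition \ref{2.2} with $\mathcal{Y}=\mathcal{X}$ and with $\mathcal{F}=\mathcal{O}_{\Delta_{\mathcal{X}}}:=\Delta_{\mathcal{X},*}\mathcal{O}_{\mathcal{X}}$, where $\Delta_{\mathcal{X}}\colon\mathcal{X}\to\mathcal{X}\times_{R}\mathcal{X}$ denotes the diagonal. Granting the hypotheses of Proposition \ref{2.2}, the resolution it produces is precisely of the shape \eqref{eq2.3}, with the roles of $\mathcal{E}_{i}$ and $\mathcal{G}_{i}$ now both played by finite locally free sheaves on $\mathcal{X}$, so the corollary drops out at once. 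Thus the entire task reduces to checking that the two standing assumptions of Proposition \ref{2.2} hold here: that $\mathcal{X}$ has affine diagonal over $R$, and that $\mathcal{O}_{\Delta_{\mathcal{X}}}$ is a coherent sheaf on $\mathcal{X}\times_{R}\mathcal{X}$.

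For the affine diagonal, I would appeal to the structure theory of stacks with the resolution property: by the theorem of Totaro and Gross \cite{Tot04, Gro17}, a quasicompact and quasiseparated algebraic stack with the resolution property is a quotient $[U/\mathrm{GL}_{n}]$ with $U$ quasi-affine, and any such quotient has affine diagonal; in particular $\Delta_{\mathcal{X}/R}$ is affine. (If one prefers to avoid the quotient presentation, the implication ``resolution property $\Rightarrow$ affine diagonal'' can also be verified directly.) For coherence of $\mathcal{O}_{\Delta_{\mathcal{X}}}$, note that $\mathcal{X}$ is separated over $R$, so $\Delta_{\mathcal{X}/R}$ is proper; being also affine and of finite type, it is finite, whence $\Delta_{\mathcal{X},*}\mathcal{O}_{\mathcal{X}}$ is a coherent $\mathcal{O}_{\mathcal{X}\times_{R}\mathcal{X}}$-module. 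With both points in hand, Proposition \ref{2.2} applies verbatim and yields \eqref{eq2.3}.

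I do not expect a serious obstacle here: the argument is a direct specialization of Proposition \ref{2.2}, and the only genuinely external input is the implication from the resolution property to affine diagonal, which is where whatever content there is resides. The remaining steps --- identifying $\mathcal{O}_{\Delta_{\mathcal{X}}}$ with the pushforward along the diagonal and checking that this sheaf is coherent (using that $R$ is Noetherian in the situations of interest) --- are routine and I would not dwell on them.
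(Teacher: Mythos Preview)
Your proposal is correct and matches the paper's approach: the paper states Corollary~\ref{2.3} as an immediate consequence of Proposition~\ref{2.2} with no further proof, and you have simply spelled out the routine verification of the hypotheses (affine diagonal via the resolution property, and coherence of $\mathcal{O}_{\Delta_{\mathcal{X}}}$ via separatedness plus affineness of $\Delta_{\mathcal{X}}$).
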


\section{Generating sheaves and ample sequences}\label{3}

In this section, we produce an ample sequence in the category of coherent sheaves $\operatorname{Coh}(\mathcal{X})$ for a smooth separated tame irreducible algebraic stack $\mathcal{X}$ over a noetherian ring $R$ with the resolution property and a projective coarse moduli space. We first remind the reader of the notion of an ample sequence, which is due to Orlov.
\begin{definition}[{\cite[Definition\ 2.72]{Huy06}}]\label{3.1}
Let $\mathcal{A}$ be an abelian category. A sequence of objects $\{\mathcal{L}_{i}\}_{i\in\mathbb{Z}}$ in $\mathcal{A}$ is {\it ample} if for every object $\mathcal{F}\in\mathcal{A}$, there exists an integer $N$ such that for all $n>N$:
\begin{enumerate}[topsep=0pt,noitemsep,label=\normalfont(\arabic*)]
    \item\label{3.1.1} the canonical map $\operatorname{Hom}(\mathcal{L}_{n},\mathcal{F})\otimes\mathcal{L}_{n}\to \mathcal{F}$ is surjective,
    \item\label{3.1.2} $\operatorname{Hom}(\mathcal{L}_{n},\mathcal{F}[i])=0$ for all $i\neq 0$, and
    \item\label{3.1.3} $\operatorname{Hom}(\mathcal{F},\mathcal{L}_{n})=0$.
\end{enumerate}
\end{definition}

For example, let $X$ be a smooth projective variety over a field. Let $\mathcal{L}$ be an ample line bundle on $X$. The sequence $\{\mathcal{L}^{\otimes -n}\}_{n\in\mathbb{Z}}$ is ample in $\operatorname{Coh}(X)$. Indeed, the first two conditions follow directly from ampleness. The last one follows from the second by Serre duality. Ample sequences are particularly useful for extending natural isomorphisms of functors. See \cite[Proposition\ 3.7]{CS07} for the details.

Unfortunately, it is impossible to carry the notion of ampleness over directly to stacks. To remedy the situation, we appeal to the following notion of a generating vector bundle introduced by Olsson and Starr.

\begin{definition}[{\cite[Definition\ 5.1]{OS03}}]\label{3.2}
Let $\mathcal{X}$ be a tame algebraic stack with coarse moduli space $\pi\colon\mathcal{X}\to X$. A vector bundle $\mathcal{V}$ on $\mathcal{X}$ is {\it generating} if the natural map$$\pi^{*}\pi_{*}\sheafhom_{\mathcal{O_{X}}}(\mathcal{V},\mathcal{F})\otimes_{\mathcal{O_{X}}}\mathcal{V}\to\mathcal{F}$$is surjective.
\end{definition}

Consider the classifying stack $B_{k}G$ over a field $k$ where $G$ is a finite group such that $|G|$ is coprime to the characteristic of $k$. We see that $G$ is linearly reductive over $k$, and thus $B_{k}G$ is a tame stack. Quasicoherent sheaves on $B_{k}G$ are nothing but representations of $G$ over $k$. Let $\mathcal{V}$ be the locally free sheaves corresponding to the regular representation of $G$ over $k$. Then $\mathcal{V}$ is a generating vector bundle on $B_{k}G$. This follows from the fact that every irreducible representation of $G$ is a direct summand of the regular representation. 

Recall that $B_{k}G$ is a global quotient stack. It was shown that every separated tame quotient Deligne--Mumford stack admits a generating vector bundle \cite[Theorem\ 5.5]{OS03}. We claim that this holds for separated tame quotient algebraic stacks as well. Let $\mathcal{X}$ be such a stack, and let $X$ be the associated coarse moduli space. By \cite[Theorem\ 3.2]{AOV08}, there exists an \'etale cover $X^\prime\to X$ such that $\mathcal{X}\times_{X}X^\prime\cong[Y/G]$ where $Y$ is a scheme that is finite over $X$ and $G$ is a finite linearly reductive group. The statement then follows from the proof of \cite[Theorem\ 5.5]{OS03}. In particular, every smooth tame Deligne--Mumford stack with projective coarse moduli space over a noetherian scheme admits a generating vector bundle by Corollary \ref{A.2}. Combining the surjective map in Definition \ref{3.2} and the global generation property of ample line bundles on projective schemes, we obtain the following lemma due to Kresch \cite[Section\ 5.2]{kre09}.

\begin{lemma}\label{3.3}
Let $R$ be a noetherian ring. Let $\mathcal{X}$ be a smooth separated tame algebraic stack over $R$ with coarse moduli space $\pi\colon\mathcal{X}\to X$. If $\mathcal{X}$ admits a generating vector bundle $\mathcal{V}$ and $X$ admits an ample line bundle $L$, then the sum of the global morphisms of $\mathcal{V}\otimes\pi^{*}L^{\otimes-n}$ into any quasicoherent sheaf $\mathcal{F}$ on $\mathcal{X}$ generates $\mathcal{F}$.
\end{lemma}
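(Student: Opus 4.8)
The plan is to descend the claim to the scheme $X$, where it becomes the standard global generation property of the ample line bundle $L$. Fix a quasicoherent sheaf $\mathcal{F}$ on $\mathcal{X}$ and write $\mathcal{G}:=\pi_{*}\sheafhom_{\mathcal{O}_{\mathcal{X}}}(\mathcal{V},\mathcal{F})$. Since $\mathcal{V}$ is locally free of finite rank we have $\sheafhom_{\mathcal{O}_{\mathcal{X}}}(\mathcal{V},\mathcal{F})\cong\mathcal{V}^{\vee}\otimes_{\mathcal{O}_{\mathcal{X}}}\mathcal{F}$, which is quasicoherent; as $\pi$ is quasicompact and quasiseparated, $\mathcal{G}$ is a quasicoherent sheaf on $X$. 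By Definition \ref{3.2} the evaluation map $\pi^{*}\mathcal{G}\otimes_{\mathcal{O}_{\mathcal{X}}}\mathcal{V}\to\mathcal{F}$ is surjective, so it will be enough to present $\pi^{*}\mathcal{G}\otimes_{\mathcal{O}_{\mathcal{X}}}\mathcal{V}$ as a quotient of a direct sum of copies of the sheaves $\mathcal{V}\otimes\pi^{*}L^{\otimes-n}$, $n\geq 0$.

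The next step is to generate $\mathcal{G}$ on $X$ by $\{L^{\otimes-n}\}_{n\geq 0}$. Expressing $\mathcal{G}$ as the filtered union of its finite type quasicoherent subsheaves and invoking ampleness of $L$, each such subsheaf $\mathcal{G}_{\alpha}$ admits, for $n_{\alpha}\gg 0$, a surjection $(L^{\otimes-n_{\alpha}})^{\oplus r_{\alpha}}\twoheadrightarrow\mathcal{G}_{\alpha}$; summing these over all $\alpha$ gives a surjection $\bigoplus_{\alpha}(L^{\otimes-n_{\alpha}})^{\oplus r_{\alpha}}\twoheadrightarrow\mathcal{G}$ on $X$. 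Now I would apply the right exact functor $\pi^{*}$, then tensor with the locally free sheaf $\mathcal{V}$, and finally compose with the evaluation map of Definition \ref{3.2}; this yields a surjection $\bigoplus_{\alpha}(\mathcal{V}\otimes\pi^{*}L^{\otimes-n_{\alpha}})^{\oplus r_{\alpha}}\twoheadrightarrow\mathcal{F}$.

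To conclude, note that each component of this last map is a morphism $\mathcal{V}\otimes\pi^{*}L^{\otimes-n_{\alpha}}\to\mathcal{F}$, that is, an element of $\operatorname{Hom}_{\mathcal{O}_{\mathcal{X}}}(\mathcal{V}\otimes\pi^{*}L^{\otimes-n_{\alpha}},\mathcal{F})$; since the components jointly surject onto $\mathcal{F}$, the sum of the images of all global morphisms $\mathcal{V}\otimes\pi^{*}L^{\otimes-n}\to\mathcal{F}$, taken over $n\geq 0$, is all of $\mathcal{F}$, as claimed. The only point requiring real care is that $\mathcal{F}$ is an arbitrary quasicoherent sheaf rather than a coherent one: this is what forces the filtered colimit argument on $X$ and the bookkeeping that $\pi_{*}\sheafhom_{\mathcal{O}_{\mathcal{X}}}(\mathcal{V},\mathcal{F})$ remains quasicoherent. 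Tameness and smoothness of $\mathcal{X}$ do not enter this argument directly beyond guaranteeing the existence of a generating vector bundle $\mathcal{V}$ and of the coarse moduli space $X$.
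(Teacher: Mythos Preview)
Your proof is correct and follows essentially the same route as the paper's: both reduce to generating $\mathcal{G}=\pi_{*}\sheafhom_{\mathcal{O}_{\mathcal{X}}}(\mathcal{V},\mathcal{F})$ on $X$ by negative powers of $L$, then apply $\pi^{*}(-)\otimes\mathcal{V}$ and compose with the evaluation map of Definition~\ref{3.2}. The only cosmetic difference is that the paper invokes the surjection $\bigoplus_{n\geq 1}\Gamma(X,\mathcal{G}\otimes L^{\otimes n})\otimes_{R}L^{\otimes-n}\twoheadrightarrow\mathcal{G}$ directly and then identifies $\Gamma(X,\mathcal{G}\otimes L^{\otimes n})\cong\operatorname{Hom}_{\mathcal{O}_{\mathcal{X}}}(\mathcal{V}\otimes\pi^{*}L^{\otimes-n},\mathcal{F})$ via the projection formula, whereas you pass through a filtered colimit of finite-type subsheaves and simply observe that each resulting component is an element of this Hom space.
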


\begin{proof}
Since $\mathcal{V}$ is generating, there exists a surjective map$$\pi^{*}\pi_{*}\sheafhom_{\mathcal{O_{X}}}(\mathcal{V},\mathcal{F})\otimes\mathcal{V}\twoheadrightarrow\mathcal{F}$$ for every quasicoherent sheaf $\mathcal{F}$ on $\mathcal{X}$. Note that $\pi_{*}\sheafhom_{\mathcal{O_{X}}}(\mathcal{V},\mathcal{F})$ is a quasicoherent sheaf on $X$. By ampleness, we have a surjective map$$\bigoplus_{n\geq 1}\Gamma(X,\pi_{*}\sheafhom_{\mathcal{O_{X}}}(\mathcal{V},\mathcal{F})\otimes_{\mathcal{O}_{X}}L^{\otimes n})\otimes_{R}L^{\otimes -n}\twoheadrightarrow\pi_{*}\sheafhom_{\mathcal{O_{X}}}(\mathcal{V},\mathcal{F}).$$Since $\pi^{*}$ and $-\otimes_{\mathcal{O_{X}}}\mathcal{V}$ are right exact and $\mathcal{V}$ is generating, we have a surjective map$$\bigoplus_{n\geq 1}\Gamma(X,\pi_{*}\sheafhom_{\mathcal{O}_{X}}(\mathcal{V},\mathcal{F})\otimes_{\mathcal{O_{X}}}L^{\otimes n})\otimes_{R}(\mathcal{V}\otimes_{\mathcal{O_{X}}}\pi^{*}L^{\otimes -n})\twoheadrightarrow\mathcal{F}.$$Since $\pi\colon\mathcal{X}\to X$ is a tame moduli space, we have
\begin{align*}
    &\bigoplus_{n\geq 1}\Gamma(X,\pi_{*}\sheafhom_{\mathcal{O_{X}}}(\mathcal{V},\mathcal{F})\otimes_{\mathcal{O}_{X}}L^{\otimes n})\otimes_{R}(\mathcal{V}\otimes_{\mathcal{O_{X}}}\pi^{*}L^{\otimes -n})&\\
    \cong & \bigoplus_{n\geq 1}\Gamma(X,\pi_{*}(\sheafhom_{\mathcal{O_{X}}}(\mathcal{V},\mathcal{F})\otimes_{\mathcal{O_{X}}}\pi^{*}L^{\otimes n}))\otimes_{R}(\mathcal{V}\otimes_{\mathcal{O_{X}}}\pi^{*}L^{\otimes -n})&\\
    \cong & \bigoplus_{n\geq 1}\Gamma(\mathcal{X},\sheafhom_{\mathcal{O_{X}}}(\mathcal{V},\mathcal{F})\otimes_{\mathcal{O_{X}}}\pi^{*}L^{\otimes n})\otimes_{R}(\mathcal{V}\otimes_{\mathcal{O_{X}}}\pi^{*}L^{\otimes -n})&\\
    \cong & \bigoplus_{n\geq 1}\operatorname{Hom}_{\mathcal{O_{X}}}(\mathcal{V}\otimes_{\mathcal{O_{X}}}\pi^{*}L^{\otimes -n},\mathcal{F})\otimes_{R}(\mathcal{V}\otimes_{\mathcal{O_{X}}}\pi^{*}L^{\otimes -n}).
\end{align*}
This finishes the proof.
\end{proof}

We wish to show that the sequence $S=\{\mathcal{V}\otimes_{\mathcal{O_{X}}}\pi^{*}L^{\otimes -n}\}_{n\in\mathbb{Z}}$ is ample in $\operatorname{Coh}(\mathcal{X})$ if $\mathcal{X}$ is proper. Lemma \ref{3.3} tells us that $S$ satisfies condition \ref{3.1.1} in Definition \ref{3.1}. We now verify the other two conditions. 

\begin{lemma}\label{3.4}
Let $R$ be a noetherian ring. Let $\mathcal{X}$ be a smooth separated tame algebraic stack over $R$ with coarse moduli space $\pi\colon\mathcal{X}\to X$. Let $\mathcal{V}$ be a vector bundle on $\mathcal{X}$ and $L$ be an ample line bundle on $X$. Let $\mathcal{F}$ be a coherent sheaf on $\mathcal{X}$. If $X$ is proper, then there exists $N\in\mathbb{N}$ such that$$\operatorname{Ext}_{\mathcal{O_{X}}}^{i}(\mathcal{V}\otimes_{\mathcal{O_{X}}}\pi^{*}L^{\otimes -n},\mathcal{F})=0$$for all $i>0$ and $n>N$.
\end{lemma}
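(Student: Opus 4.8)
The plan is to reduce the vanishing statement to a finite set of cohomology computations on $\mathcal{X}$, and then push everything down to the coarse moduli space $X$, where Serre vanishing for the ample line bundle $L$ applies. First I would rewrite the Ext groups using the adjunction between $\pi^*$ and $\pi_*$. Since $\mathcal{X}$ is tame, the pushforward $\pi_*$ is exact on quasicoherent sheaves, so for any quasicoherent $\mathcal{G}$ on $\mathcal{X}$ we have $\operatorname{Ext}^i_{\mathcal{O}_{\mathcal{X}}}(\mathcal{V}\otimes_{\mathcal{O}_{\mathcal{X}}}\pi^*L^{\otimes -n},\mathcal{G}) \cong H^i\big(\mathcal{X},\sheafhom_{\mathcal{O}_{\mathcal{X}}}(\mathcal{V},\mathcal{G})\otimes_{\mathcal{O}_{\mathcal{X}}}\pi^*L^{\otimes n}\big) \cong H^i\big(X,\pi_*\sheafhom_{\mathcal{O}_{\mathcal{X}}}(\mathcal{V},\mathcal{G})\otimes_{\mathcal{O}_X} L^{\otimes n}\big)$, the last step by the projection formula (valid because $\pi_*$ is exact and $L$ is locally free) together with the fact that $\pi_*$ commutes with the relevant cohomology since it is exact and affine-like in the tame setting — more precisely, $R^j\pi_* = 0$ for $j>0$, so the Leray spectral sequence degenerates.

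The next step is to observe that $\mathcal{H} := \pi_*\sheafhom_{\mathcal{O}_{\mathcal{X}}}(\mathcal{V},\mathcal{F})$ is a coherent sheaf on $X$, because $\mathcal{V}$ is a vector bundle, $\mathcal{F}$ is coherent, $\mathcal{X}$ is noetherian, and $\pi$ is proper (a coarse space map is proper). Then I would invoke Serre's vanishing theorem: since $X$ is proper over the noetherian ring $R$ and $L$ is ample, there exists $N$ such that $H^i(X,\mathcal{H}\otimes_{\mathcal{O}_X} L^{\otimes n}) = 0$ for all $i>0$ and all $n>N$. Combining this with the identification from the first step (applied with $\mathcal{G}=\mathcal{F}$) gives exactly $\operatorname{Ext}^i_{\mathcal{O}_{\mathcal{X}}}(\mathcal{V}\otimes_{\mathcal{O}_{\mathcal{X}}}\pi^*L^{\otimes -n},\mathcal{F}) = 0$ for $i>0$ and $n>N$, as desired.

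The main obstacle is making sure the reduction to $X$ is legitimate, i.e. that all the sheaf operations behave as expected. The key input is tameness: it guarantees $\pi_*$ is exact on quasicoherent sheaves (this is essentially the definition of a tame stack, via linear reductivity of the stabilizers), which is what lets me commute $\pi_*$ past both the internal Hom into cohomology and the tensor with $L^{\otimes n}$ via the projection formula, and what forces the higher direct images $R^j\pi_*$ to vanish so that $H^i(\mathcal{X},-) = H^i(X,\pi_*(-))$. Without tameness, $R^j\pi_*$ could be nonzero (in characteristic $p$ with $p$ dividing a stabilizer order), and the argument would collapse. One should also note that $X$ is proper over $R$: this follows because $\mathcal{X}$ is proper over $R$ (the hypothesis of the ambient theorem, though here only properness of $X$ is assumed) and $\pi$ is proper and surjective, so $X\to\operatorname{Spec} R$ is proper. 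With these two facts in hand, Serre vanishing on the proper scheme $X$ over the noetherian ring $R$ closes the argument; no further subtlety remains.
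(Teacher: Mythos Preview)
Your proposal is correct and follows essentially the same route as the paper: rewrite the Ext group as $H^{i}(X,\pi_{*}\sheafhom_{\mathcal{O}_{\mathcal{X}}}(\mathcal{V},\mathcal{F})\otimes_{\mathcal{O}_X}L^{\otimes n})$ using that $\pi_*$ is exact (tameness), note that $\pi_{*}\sheafhom_{\mathcal{O}_{\mathcal{X}}}(\mathcal{V},\mathcal{F})$ is coherent because $\pi$ is proper, and finish with Serre vanishing on $X$. The only cosmetic difference is that the paper reaches the identification via the adjunctions $(\otimes\,\mathcal{V})\dashv\sheafhom(\mathcal{V},-)$ and $\pi^{*}\dashv\pi_{*}$ directly, whereas you pass through $H^{i}(\mathcal{X},-)$ and invoke degeneration of the Leray spectral sequence; the content is the same.
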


\begin{proof}
By adjunction, we have
\begin{align*}
    \operatorname{Ext}_{\mathcal{O_{X}}}^{i}(\mathcal{V}\otimes\pi^{*}L^{\otimes -n},\mathcal{F})\cong&\operatorname{Ext}_{\mathcal{O_{X}}}^{i}(\pi^{*}L^{\otimes -n},\sheafhom_{\mathcal{O_{X}}}(\mathcal{V},\mathcal{F}))\\
    \cong&\operatorname{Ext}_{\mathcal{O_{X}}}^{i}(L^{\otimes -n},\pi_{*}\sheafhom_{\mathcal{O_{X}}}(\mathcal{V},\mathcal{F}))\ \ (\pi_{*}\ \text{is exact})\\
    \cong&H^{i}(X,\pi_{*}\sheafhom_{\mathcal{O_{X}}}(\mathcal{V},\mathcal{F})\otimes_{\mathcal{O}_{X}}L^{\otimes n})
\end{align*}
Since $\pi$ is proper, $\pi_{*}\sheafhom_{\mathcal{O_{X}}}(\mathcal{V},\mathcal{F})$ is coherent. Applying Serre vanishing to $X$ and $\pi_{*}\sheafhom_{\mathcal{O_{X}}}(\mathcal{V},\mathcal{F})$ finishes the proof.
\end{proof}

It remains to verify \ref{3.1.3} for $S$. We do so by reducing to the case where $R$ is a field using cohomology and base change for algebraic stacks.

\begin{lemma}\label{3.5}
Let $R$ be a noetherian ring. Let $\mathcal{X}$ be a smooth separated irreducible tame algebraic stack over $R$ with coarse moduli space $\pi\colon\mathcal{X}\to X$. Let $\mathcal{V}$ be a generating vector bundle on $\mathcal{X}$ and $L$ be an ample line bundle on $X$. Let $\mathcal{F}$ be a coherent sheaf on $\mathcal{X}$. If $X$ is proper, then there exists an integer $N\in\mathbb{N}$ such that$$\operatorname{Hom}_{\mathcal{O}_{\mathcal{X}}}(\mathcal{F},\mathcal{V}\otimes_{\mathcal{O_{X}}}\pi^{*}L^{\otimes -n})=0$$for all $n>N$.
\end{lemma}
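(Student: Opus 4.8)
The plan is to rewrite the $\operatorname{Hom}$-group on $\mathcal{X}$ as an $H^{0}$ on the coarse space $X$, to settle the case of a base field by hand, and then to pass to general $R$ via cohomology and base change.

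\emph{Reduction to the coarse space.} Since $\mathcal{V}$ is locally free we have $\sheafhom_{\mathcal{O}_{\mathcal{X}}}(\mathcal{F},\mathcal{V}\otimes\pi^{*}L^{\otimes -n})\cong\sheafhom_{\mathcal{O}_{\mathcal{X}}}(\mathcal{F},\mathcal{V})\otimes\pi^{*}L^{\otimes -n}$, and using that $\pi$ is proper, that $\pi_{*}$ is exact by tameness, and the projection formula together with $\Gamma_{\mathcal{X}}=\Gamma_{X}\circ\pi_{*}$, I would identify
\begin{equation*}
\operatorname{Hom}_{\mathcal{O}_{\mathcal{X}}}(\mathcal{F},\mathcal{V}\otimes\pi^{*}L^{\otimes -n})\cong\Gamma\bigl(X,\mathcal{G}\otimes L^{\otimes -n}\bigr),\qquad\mathcal{G}:=\pi_{*}\sheafhom_{\mathcal{O}_{\mathcal{X}}}(\mathcal{F},\mathcal{V}),
\end{equation*}
with $\mathcal{G}$ a coherent sheaf on $X$. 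As $X$ is proper over $R$ and carries the ample line bundle $L$, it is projective over $R$, so the task becomes to show $\Gamma(X,\mathcal{G}\otimes L^{\otimes -n})=0$ for $n\gg 0$.

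\emph{The base-field case.} Assume $R=k$ is a field. Then $\mathcal{X}$ is smooth over $k$ and irreducible, hence integral, so $\operatorname{Ass}\mathcal{O}_{\mathcal{X}}$ consists of a single point; as $\mathcal{V}$ is locally free this gives $\operatorname{Ass}\sheafhom_{\mathcal{O}_{\mathcal{X}}}(\mathcal{F},\mathcal{V})\subseteq\operatorname{Ass}\mathcal{O}_{\mathcal{X}}$, and pushing forward along the homeomorphism $\pi$ I would conclude that $\mathcal{G}$ has no nonzero subsheaf supported in dimension $<\dim X$. A nonzero global section of $\mathcal{G}\otimes L^{\otimes -n}$ is a nonzero map $s\colon L^{\otimes n}\to\mathcal{G}$; its kernel is a subsheaf of the line bundle $L^{\otimes n}$, whose associated points lie over the generic point $\eta$ of $X$, so if the kernel were nonzero then $s$ would already be zero at $\eta$, and then its image — a subsheaf of $\mathcal{G}$ supported away from $\eta$ — would vanish, contradicting $s\ne 0$. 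Hence $L^{\otimes n}\hookrightarrow\mathcal{G}$, whence $h^{0}(X,L^{\otimes n})\le h^{0}(X,\mathcal{G})<\infty$; once $\dim X\ge 1$ this contradicts $h^{0}(X,L^{\otimes n})\to\infty$ for large $n$ by Serre, which gives the claim. (If $\dim X=0$ then $\mathcal{X}$ is finite over $k$ and the assertion is to be read in that degenerate regime; alternatively one may argue classically via Serre duality on $X$ and Serre vanishing.)

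\emph{Passage to general $R$.} For an arbitrary Noetherian $R$ I would reduce fibrewise to the preceding case. Smoothness and connectedness of $\mathcal{X}\to\operatorname{Spec}R$ make the relative dimension a constant $d$, which I take to be positive; then each nonempty fibre $\mathcal{X}_{s}$ ($s\in\operatorname{Spec}R$) is a positive-dimensional smooth proper tame stack over $\kappa(s)$ whose coarse space is the projective $\kappa(s)$-scheme $X_{s}$, so the base-field case yields $\Gamma(X_{s},\mathcal{G}_{s}\otimes L_{s}^{\otimes -n})=0$ for $n$ large depending on $s$. The remaining — and principal — difficulty is to make this bound uniform over all of $\operatorname{Spec}R$, and this is exactly where cohomology and base change for algebraic stacks \cite{Hal14} is used: together with the dévissage $\mathcal{G}\supseteq I\mathcal{G}\supseteq\cdots\supseteq 0$ for a nilpotent ideal $I$ with $R/I$ reduced (reducing to $R$ reduced), generic flatness, and Noetherian induction, one shows that over a dense open of $\operatorname{Spec}R$ the relevant cohomology of $X\to\operatorname{Spec}R$ is computed by a single perfect complex, so fibrewise vanishing propagates, and one recurses on the complementary closed subscheme, which is again Noetherian with $X$ proper over it and $L$ still ample. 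I expect this base-change and dévissage bookkeeping — in particular propagating the ``no small associated points'' property through each reduction — to be the main obstacle; the geometric content over a field is classical.
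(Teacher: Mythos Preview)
Your reduction to the coarse space and your field-case argument are essentially the paper's: the paper also rewrites the $\operatorname{Hom}$ as $\Gamma(X,\pi_{*}\sheafhom_{\mathcal{O}_{\mathcal{X}}}(\mathcal{F},\mathcal{V})\otimes L^{\otimes -n})$, observes that $\pi_{*}\sheafhom_{\mathcal{O}_{\mathcal{X}}}(\mathcal{F},\mathcal{V})$ is torsion-free on the integral scheme $X$, and then simply cites the Enriques--Severi--Zariski lemma \cite[\href{https://stacks.math.columbia.edu/tag/0FD7}{Tag 0FD7}]{stacks-project} rather than reproving its $H^{0}$-part by hand as you do.

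The genuine divergence is in the passage from a field to general $R$. Your generic-flatness/No\-ether\-ian-induction sketch runs into exactly the obstacle you flag: restricting $\mathcal{G}$ along a closed subscheme $Z\hookrightarrow\operatorname{Spec}R$ need not commute with $f_{*}$, so the recursion on the complement does not obviously compute what you want; and the ``no small associated points'' property of $\mathcal{G}$ is not stable under the filtration by powers of a nilpotent ideal, so the dévissage to reduced $R$ is also delicate. The paper avoids all of this bookkeeping by never passing to $\mathcal{G}$ for the reduction step. Instead it works directly with $\operatorname{Hom}_{\mathcal{O}_{\mathcal{X}}}(\mathcal{F},\mathcal{V}\otimes\pi^{*}L^{\otimes -n})$ and argues as follows: for each closed point $s\in\operatorname{Spec}R$, the field case gives vanishing of $\operatorname{Hom}_{\mathcal{O}_{\mathcal{X}_{s}}}(\mathcal{F}_{s},\mathcal{N}_{s})$ for $\mathcal{N}=\mathcal{V}\otimes\pi^{*}L^{\otimes -n}$ and $n>N_{s}$; the truncation triangle
\[
\tau^{\le -1}Li_{s}^{*}\mathcal{F}\longrightarrow Li_{s}^{*}\mathcal{F}\longrightarrow \mathcal{F}_{s}\longrightarrow
\]
together with $\operatorname{Hom}(\tau^{\le -1}Li_{s}^{*}\mathcal{F},\mathcal{N}_{s})=0$ upgrades this to vanishing of $\operatorname{Hom}_{\mathcal{O}_{\mathcal{X}_{s}}}(Li_{s}^{*}\mathcal{F},\mathcal{N}_{s})$; then Hall's cohomology-and-base-change theorem \cite[Theorem A]{Hal14}, which is stated precisely for derived fibres, spreads the vanishing to an open neighbourhood of $s$; finally quasicompactness of $\operatorname{Spec}R$ gives a uniform $N$. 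The trick of replacing $\mathcal{F}_{s}$ by $Li_{s}^{*}\mathcal{F}$ via the truncation triangle is the missing idea in your sketch, and it is what makes the semicontinuity step go through without any flatness or dévissage hypotheses on $\mathcal{F}$ or $\mathcal{G}$.
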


\begin{proof}
Suppose for any closed point $s$ of $\operatorname{Spec}(R)$, there exists an integer $N_{s}$ such that $\operatorname{Hom}_{\mathcal{O}_{\mathcal{X}_{s}}}(\mathcal{F}_{s},\mathcal{V}_{s}\otimes_{\mathcal{O}_{\mathcal{X}_{s}}}\pi_{s}^{*}L_{s}^{\otimes -n})=0$ for all $n>N_{s}$. We first claim that $\operatorname{Hom}_{\mathcal{O}_{\mathcal{X}_{s}}}(L i_{s}^{*}\mathcal{F},\mathcal{V}_{s}\otimes_{\mathcal{O}_{\mathcal{X}_{s}}}\pi_{s}^{*}L_{s}^{\otimes -n})=0$ for all $n>N_{s}$, where $i_{s}\colon\mathcal{X}_{s}\to\mathcal{X}$ is the closed immersion of the fibre of $\mathcal{X}\to\operatorname{Spec}(R)$ over $s$, and $L i_{s}^{*}\mathcal{F}$ is the derived pullback. To see this, we note that for every coherent sheaf $\mathcal{F}$ on $\mathcal{X}$, there is a distinguished triangle$$\tau^{\leq -1}L i_{s}^{*}\mathcal{F}\longrightarrow L i_{s}^{*}\mathcal{F}\longrightarrow\mathcal{H}^{0}(L i_{s}^{*}\mathcal{F}_{s})\simeq\mathcal{F}_{s}\longrightarrow\tau^{\leq -1}L i_{s}^{*}\mathcal{F}[1].$$For every vector bundle $\mathcal{N}$ on $\mathcal{X}$, applying $\operatorname{Hom}_{\mathcal{O}_{\mathcal{X}_{s}}}(-,\mathcal{N}_{s})$ to the triangle above yields the following long exact sequence$$\cdots\to\operatorname{Hom}_{\mathcal{O}_{\mathcal{X}_{s}}}(\mathcal{F}_{s},\mathcal{N}_{s})\to\operatorname{Hom}_{\mathcal{O}_{\mathcal{X}_{s}}}(L i_{s}^{*}\mathcal{F},\mathcal{N}_{s})\to\operatorname{Hom}_{\mathcal{O}_{\mathcal{X}_{s}}}(\tau^{\leq -1}L i_{s}^{*}\mathcal{F},\mathcal{N}_{s})\to\cdots.$$Since $\tau^{\leq -1}L i_{s}^{*}\mathcal{F}\in D_{\operatorname{Coh}}^{\leq -1}(\mathcal{X}_{s})$ while $\mathcal{N}_{s}$ is concentrated in degree 0, we see that $\operatorname{Hom}_{\mathcal{O}_{\mathcal{X}_{s}}}(\tau^{\leq -1}L i_{s}^{*}\mathcal{F},\mathcal{N}_{s})=0$, and the map $\operatorname{Hom}_{\mathcal{O}_{\mathcal{X}_{s}}}(\mathcal{F}_{s},\mathcal{N}_{s})\to\operatorname{Hom}_{\mathcal{O}_{\mathcal{X}_{s}}}(L i_{s}^{*}\mathcal{F},\mathcal{N}_{s})$ is surjective. Setting $\mathcal{N}=\mathcal{V}\otimes_{\mathcal{O_{X}}}\pi^{*}L^{\otimes -n}$, we have $\operatorname{Hom}_{\mathcal{O}_{\mathcal{X}_{s}}}(\mathcal{F}_{s},\mathcal{V}_{s}\otimes_{\mathcal{O}_{\mathcal{X}_{s}}}\pi_{s}^{*}L_{s}^{\otimes -n})=0$ for all $n>N_{s}$. It follows that $\operatorname{Hom}_{\mathcal{O}_{\mathcal{X}_{s}}}(L i_{s}^{*}\mathcal{F},\mathcal{V}_{s}\otimes_{\mathcal{O}_{\mathcal{X}_{s}}}\pi_{s}^{*}L_{s}^{\otimes -n})=0$ for all $n>N_{s}$ as desired. By \cite[Theorem\ A]{Hal14}, there exists an open neighborhood $U_{s}\subset|\operatorname{Spec}(R)|$ such that $\operatorname{Hom}_{\mathcal{O}_{\mathcal{X}_{U_{s}}}}(\mathcal{F}_{X_{U_{s}}},\mathcal{V}_{s}\otimes_{\mathcal{O}_{\mathcal{X}_{U_{s}}}}\pi_{\mathcal{X}_{U_{s}}}^{*}\mathcal{L}_{\mathcal{X}_{U_{s}}}^{\otimes -n})=0$. Since $R$ is noetherian, we can cover $\operatorname{Spec}(R)$ with finitely many such open neighborhoods. Setting $N$ as the maximum among the $N_{s}$s obtained from the finite covering above would finish the proof. Therefore, we may assume $R$ is a field. In this case, we have
\begin{align*}
    \operatorname{Hom}_{\mathcal{O_{X}}}(\mathcal{F},\mathcal{V}\otimes_{\mathcal{O_{X}}}\pi^{*}L^{\otimes -n})\cong&\Gamma(\mathcal{X},\sheafhom_{\mathcal{O_{X}}}(\mathcal{F},\mathcal{V}\otimes_{\mathcal{O_{X}}}\pi^{*}L^{\otimes -n}))\\
    \cong&\Gamma(\mathcal{X},\sheafhom_{\mathcal{O_{X}}}(\mathcal{F},\mathcal{V})\otimes_{\mathcal{O_{X}}}\pi^{*}L^{\otimes -n})\\
    \cong&\Gamma(X,\pi_{*}(\sheafhom_{\mathcal{O_{X}}}(\mathcal{F},\mathcal{V})\otimes_{\mathcal{O_{X}}}\pi^{*}L^{\otimes -n}))\\
    \cong&\Gamma(X,\pi_{*}\sheafhom_{\mathcal{O_{X}}}(\mathcal{F},\mathcal{V})\otimes_{\mathcal{O}_{X}}L^{\otimes -n}).
\end{align*}
Note that $\sheafhom_{\mathcal{O_{X}}}(\mathcal{F},\mathcal{V})$ is torsion-free. Since $\pi$ surjective, $\pi_{*}\sheafhom_{\mathcal{O_{X}}}(\mathcal{F},\mathcal{V})$ is also torsion-free. Since $\mathcal{X}$ is smooth and irreducible, its coarse space $X$ is integral. Applying the lemma of Enriques--Severi--Zariski \cite[\href{https://stacks.math.columbia.edu/tag/0FD7}{Tag 0FD7}]{stacks-project} to $\pi_{*}\sheafhom_{\mathcal{O_{X}}}(\mathcal{F},\mathcal{V})$ on $X$ finishes the proof.
\end{proof}

\begin{proposition}\label{3.6}
Let $R$ be a noetherian ring. Let $\mathcal{X}$ be a smooth separated irreducible tame algebraic stack over $R$ with coarse moduli space $\pi\colon\mathcal{X}\to X$. Let $\mathcal{V}$ be a generating vector bundle on $\mathcal{X}$ and $L$ an ample line bundle on $X$. If $X$ is proper, then the sequence $S=\{\mathcal{V}\otimes_{\mathcal{O_{X}}}\pi^{*}L^{\otimes -n}\}_{n\in\mathbb{Z}}$ is ample in $\operatorname{Coh}(\mathcal{X})$. 
\end{proposition}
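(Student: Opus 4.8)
The plan is to verify, for the sequence $S=\{\mathcal{L}_{n}\}_{n\in\mathbb{Z}}$ with $\mathcal{L}_{n}=\mathcal{V}\otimes_{\mathcal{O}_{\mathcal{X}}}\pi^{*}L^{\otimes-n}$, each of the three conditions in Definition \ref{3.1}, and then, given a coherent sheaf $\mathcal{F}$ on $\mathcal{X}$, to take $N$ to be the maximum of the three integers the separate verifications produce. So the whole proof is an assembly of Lemmas \ref{3.3}, \ref{3.4}, and \ref{3.5}, together with one trivial observation about the abelian category $\operatorname{Coh}(\mathcal{X})$.

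For condition \ref{3.1.1}, I would appeal to Lemma \ref{3.3}. As stated, that lemma records surjectivity of the direct sum over $n\geq 1$ of the evaluation maps, while Definition \ref{3.1} asks for a single index $n\gg 0$; but its proof in fact gives the stronger statement, since $\pi_{*}\sheafhom_{\mathcal{O}_{X}}(\mathcal{V},\mathcal{F})$ is coherent on the projective scheme $X$, so $\pi_{*}\sheafhom_{\mathcal{O}_{X}}(\mathcal{V},\mathcal{F})\otimes_{\mathcal{O}_{X}}L^{\otimes n}$ is globally generated for $n\gg 0$, and pulling the resulting surjection back along $\pi$, tensoring with $\mathcal{V}$, and composing with the generating surjection of Definition \ref{3.2} exhibits $\operatorname{Hom}_{\mathcal{O}_{\mathcal{X}}}(\mathcal{L}_{n},\mathcal{F})\otimes\mathcal{L}_{n}\to\mathcal{F}$ as surjective for all sufficiently large $n$.

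For condition \ref{3.1.2}, I would split into $i>0$ and $i<0$. The vanishing of $\operatorname{Hom}_{\mathcal{O}_{\mathcal{X}}}(\mathcal{L}_{n},\mathcal{F}[i])\cong\operatorname{Ext}^{i}_{\mathcal{O}_{\mathcal{X}}}(\mathcal{L}_{n},\mathcal{F})$ for $i>0$ and $n\gg 0$ is precisely Lemma \ref{3.4}; for $i<0$ there is nothing to check, because $\mathcal{L}_{n}$ and $\mathcal{F}$ lie in the abelian category $\operatorname{Coh}(\mathcal{X})$ and negative $\operatorname{Ext}$-groups between objects concentrated in degree $0$ vanish. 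Finally, condition \ref{3.1.3}, namely $\operatorname{Hom}_{\mathcal{O}_{\mathcal{X}}}(\mathcal{F},\mathcal{L}_{n})=0$ for $n\gg 0$, is exactly Lemma \ref{3.5}.

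The only non-formal ingredient is Lemma \ref{3.5} — the reduction to a base field by cohomology and base change \cite{Hal14}, followed by the torsion-freeness argument on $\pi_{*}\sheafhom_{\mathcal{O}_{X}}(\mathcal{F},\mathcal{V})$ and the Enriques--Severi--Zariski vanishing on the integral projective scheme $X$ — so the proposition itself poses no genuine obstacle; the one point requiring care is simply that the hypotheses of Lemmas \ref{3.3}, \ref{3.4}, and \ref{3.5} (smoothness, separatedness, irreducibility and tameness of $\mathcal{X}$, properness of $X$, and the existence of $\mathcal{V}$ and $L$) all coincide with those imposed in the proposition, which they do.
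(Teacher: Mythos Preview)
Your proposal is correct and follows exactly the paper's approach: the paper's proof of Proposition~\ref{3.6} is the single sentence ``This follows from Lemmas~\ref{3.3}, \ref{3.4} and \ref{3.5}.'' Your added care in extracting the single-index surjectivity for condition~\ref{3.1.1} from the proof of Lemma~\ref{3.3} (rather than from its literal direct-sum statement) is a point the paper glosses over but which you handle correctly.
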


\begin{proof}
This follows from Lemmas \ref{3.3}, \ref{3.4} and \ref{3.5}.
\end{proof}

We conclude this section with the following consequence of Lemma \ref{3.4}, which will be useful in the next section.

\begin{lemma}\label{3.7}
    Let $R$ be a noetherian ring. Let $\mathcal{X}$ be a smooth separated tame algebraic stack over $R$ with coarse moduli space $\pi\colon\mathcal{X}\to X$. Let $n\geq 0$ be a fixed integer. Let $\{\mathcal{F}_{i}\}_{1\leq i\leq n}$ be a sequence of coherent sheaves on $\mathcal{X}$. Let $\mathcal{V}$ be a generating vector bundle on $\mathcal{X}$ and let $L$ be an ample line bundle on $X$. Then there exists an integer $N_{n}$, depending on $n$, such that for all $l>N_{n}$, we have $H^{p}(\mathcal{X},\mathcal{F}_{i}\otimes\mathcal{V}\otimes\pi^{*}L^{\otimes l})=0$ for all $p>0$ and $0\leq i\leq n$.
\end{lemma}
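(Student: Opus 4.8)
The plan is to obtain this immediately from Lemma~\ref{3.4} by dualizing the first argument of the Ext groups appearing there. Fix one of the coherent sheaves $\mathcal{F}_{i}$ and work with the vector bundle $\mathcal{V}^{\vee}$ on $\mathcal{X}$. Since $\mathcal{V}^{\vee}\otimes\pi^{*}L^{\otimes -l}$ is locally free, its $\sheafhom$ into any sheaf is exact, so for every $p$ there is a natural identification
\begin{align*}
\operatorname{Ext}^{p}_{\mathcal{O}_{\mathcal{X}}}\bigl(\mathcal{V}^{\vee}\otimes\pi^{*}L^{\otimes -l},\,\mathcal{F}_{i}\bigr)
&\cong H^{p}\bigl(\mathcal{X},\,\sheafhom_{\mathcal{O}_{\mathcal{X}}}(\mathcal{V}^{\vee}\otimes\pi^{*}L^{\otimes -l},\,\mathcal{F}_{i})\bigr)\\
&\cong H^{p}\bigl(\mathcal{X},\,\mathcal{F}_{i}\otimes\mathcal{V}\otimes\pi^{*}L^{\otimes l}\bigr),
\end{align*}
where the second isomorphism uses $\mathcal{V}^{\vee\vee}\cong\mathcal{V}$ and $(\pi^{*}L^{\otimes -l})^{\vee}\cong\pi^{*}L^{\otimes l}$. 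Applying Lemma~\ref{3.4} to the coherent sheaf $\mathcal{F}_{i}$ and the vector bundle $\mathcal{V}^{\vee}$ produces an integer $N_{n,i}$ such that the left-hand side vanishes for all $p>0$ and all $l>N_{n,i}$, hence $H^{p}(\mathcal{X},\mathcal{F}_{i}\otimes\mathcal{V}\otimes\pi^{*}L^{\otimes l})=0$ for $p>0$ and $l>N_{n,i}$.

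It then only remains to make the threshold uniform. Since the index set is finite, set $N_{n}=\max_{i}N_{n,i}$; this integer depends only on $n$ and on the fixed data $\{\mathcal{F}_{i}\},\mathcal{V},L$, and for every $l>N_{n}$ and every index $i$ we have $H^{p}(\mathcal{X},\mathcal{F}_{i}\otimes\mathcal{V}\otimes\pi^{*}L^{\otimes l})=0$ for all $p>0$, which is the assertion.

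I do not expect any genuine obstacle: the statement is simply a repackaging of Serre vanishing on the coarse space $X$ applied uniformly to finitely many sheaves. (Alternatively, one can bypass the Ext manipulation: tameness of $\pi$ gives $R\pi_{*}=\pi_{*}$ on quasicoherent sheaves, so the projection formula identifies $H^{p}(\mathcal{X},\mathcal{F}_{i}\otimes\mathcal{V}\otimes\pi^{*}L^{\otimes l})$ with $H^{p}(X,\pi_{*}(\mathcal{F}_{i}\otimes\mathcal{V})\otimes L^{\otimes l})$; as $\pi$ is proper and $X$ is proper over $R$ the sheaf $\pi_{*}(\mathcal{F}_{i}\otimes\mathcal{V})$ is coherent, and classical Serre vanishing for the ample bundle $L$ gives an $l$-independent bound, after which one again maximizes over $i$.) The only points requiring a little care are that the bound must be read off after dualizing and taken over all $i$ simultaneously, both of which are immediate because the family $\{\mathcal{F}_{i}\}$ is finite.
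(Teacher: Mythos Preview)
Your proposal is correct, and your parenthetical alternative is exactly the paper's proof: the paper uses exactness of $\pi_{*}$ and the projection formula to identify $H^{p}(\mathcal{X},\mathcal{F}_{i}\otimes\mathcal{V}\otimes\pi^{*}L^{\otimes l})$ with $H^{p}(X,\pi_{*}(\mathcal{F}_{i}\otimes\mathcal{V})\otimes L^{\otimes l})$, applies Serre vanishing on $X$, and then takes the maximum over $i$. Your primary route via dualizing and citing Lemma~\ref{3.4} is just the same computation packaged one level up, since the proof of Lemma~\ref{3.4} is precisely that reduction to Serre vanishing on $X$; the only cosmetic caveat is that Lemma~\ref{3.4} carries the hypothesis that $X$ is proper, which is implicit here as well (and is in force whenever the lemma is used in Section~\ref{4}).
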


\begin{proof}
    By the exactness of $\pi_{*}$ and the projection formula, we have
    \begin{align*}
        H^{p}(\mathcal{X},\mathcal{F}_{i}\otimes\mathcal{V}\otimes\pi^{*}L^{\otimes l})\cong &H^{p}(X,\pi_{*}(\mathcal{F}_{i}\otimes\mathcal{V}\otimes\pi^{*}L^{\otimes l}))\\
        \cong &H^{p}(X,\pi_{*}(\mathcal{F}_{i}\otimes\mathcal{V})\otimes L^{\otimes l})
    \end{align*}
    for all $p>0$ and $0\leq i\leq n$. Therefore, we may assume that $\mathcal{X}$ is a projective scheme over $R$. By Serre vanishing, there exists an integer $m_{i}$ such that for any $l>m_{i}$ we have $H^{p}(\mathcal{X},\mathcal{F}_{i}\otimes\mathcal{V}\otimes\pi^{*}L^{\otimes l})=0$ for all $p>0$. Setting $N_{n}$ to be the maximum element in $\{m_{i}\ |\ 0\leq i\leq n\}$ finishes the proof.
\end{proof}

\section{Producing a Fourier--Mukai kernel}\label{4}

Let $\mathcal{X}$ and $\mathcal{Y}$ be two smooth proper tame algebraic stacks over a noetherian ring $R$. Let $\pi\colon\mathcal{X}\to X$ be the associated coarse moduli space. Let $F\colon D_{\operatorname{\operatorname{Coh}}}^{b}(\mathcal{X})\to D_{\operatorname{Coh}}^{b}(\mathcal{Y})$ be a $R$-linear exact functor such that$$\operatorname{Hom}_{\mathcal{O}_{\mathcal{Y}}}(F(\mathcal{A}),F(\mathcal{B})[i])=0$$for all $\mathcal{A},\mathcal{B}\in\operatorname{Coh}(\mathcal{X})$ and $i<0$.. Assume that $\mathcal{X}$ has the resolution property and $X$ admits an ample line bundle. In this section, we construct a Fourier--Mukai kernel for $F$. By Corollary \ref{2.3}, there exists a resolution for the diagonal of $\mathcal{X}$ as follows:
\begin{equation*}
    \begin{tikzcd}
    \cdots\arrow[r] & \mathcal{E}_{i}\boxtimes\mathcal{G}_{i}\arrow[r] & \cdots\arrow[r] & \mathcal{E}_{0}\boxtimes\mathcal{G}_{0}\arrow[r] & \mathcal{O}_{\Delta_{\mathcal{X}}}\arrow[r] & 0.
    \end{tikzcd}
\end{equation*}
Consider the following complex in $D^{b}_{\operatorname{Coh}}(\mathcal{X}\times_{R}\mathcal{X})$
\begin{equation}\label{eq4.1}
    \begin{tikzcd}
    \cdots\arrow[r] & \mathcal{E}_{i}\boxtimes\mathcal{G}_{i}\arrow[r] & \cdots\arrow[r] & \mathcal{E}_{0}\boxtimes\mathcal{G}_{0}.
    \end{tikzcd}
\end{equation}
For each $n$, we set $M_{n}$ to be the complex
\begin{equation*}
    \begin{tikzcd}
    \mathcal{E}_{n}\boxtimes\mathcal{G}_{n}\arrow[r] & \cdots\arrow[r] & \mathcal{E}_{0}\boxtimes\mathcal{G}_{0},
    \end{tikzcd}
\end{equation*}
which is the $n^{th}$ brutal truncation of the complex in \eqref{eq4.1}. Then we have the following infinite Postnikov system
\begin{equation}\label{eq4.2}
    \begin{tikzcd}
   \cdots & M_{n}\arrow[dr]\arrow[l,dashed,"{[1]}"] & & \cdots\arrow[ll,dashed,"{[1]}"]\arrow[dr] & & M_{0}\arrow[ll,dashed,"{[1]}"]\arrow[dr,"\cong"]\\
    & \cdots\arrow[r] & \mathcal{E}_{n}\boxtimes\mathcal{G}_{n}\arrow[ur]\arrow[rr] & & \cdots\arrow[ur]\arrow[rr]& & \mathcal{E}_{0}\boxtimes\mathcal{G}_{0},
    \end{tikzcd}
\end{equation}
where the dashed arrow represents the shift in $D^{b}_{\operatorname{Coh}}(\mathcal{X}\times_{R}\mathcal{X})$. Set$$\mathcal{F}_{n}=\operatorname{Ker}(\mathcal{E}_{n}\boxtimes\mathcal{G}_{n}\to\mathcal{E}_{n-1}\boxtimes\mathcal{G}_{n-1})$$for all $n>0$. Then we have$$H^{i}(M_{n})=
\begin{cases}
\mathcal{O}_{\Delta_{\mathcal{X}}} & i=0\\
\mathcal{F}_{n} & i=-n\\
0 & i\neq 0,-n.
\end{cases}$$

For a quasicompact and quasiseparated algebraic stack $\mathcal{W}$, we write $\Cohdim(\mathcal{W})$ for the cohomological dimension of $\mathcal{W}$, which is the smallest integer $n_{0}$ such that $H^{n}(\mathcal{W},\mathcal{F})=0$ for every quasicoherent sheaf $\mathcal{F}$ on $\mathcal{W}$ and $n>n_{0}$. In our case, both $\mathcal{X}$ and $\mathcal{Y}$ are smooth and tame. It follows from \cite[Theorem\ B and C]{HR15} that $\mathcal{X}$ and $\mathcal{Y}$ have finite cohomological dimensions. In particular, so do $\mathcal{X}\times_{R}\mathcal{X}$ and $\mathcal{X}\times_{R}\mathcal{Y}$. If $n>\Cohdim(\mathcal{X}\times_{R}\mathcal{X})$, we see that $M_{n}\cong\mathcal{O}_{\Delta_{\mathcal{X}}}\oplus\mathcal{F}_{n}[n]$ as complexes in $D^{b}_{\operatorname{Coh}}(\mathcal{X}\times_{R}\mathcal{X})$ and we obtain a right convolution$$(\delta_{0},0)\colon\mathcal{E}_{0}\boxtimes\mathcal{G}_{0}\longrightarrow\mathcal{O}_{\Delta_{\mathcal{X}}}\oplus\mathcal{F}_{n}[n].$$By the K\"unneth formula, we have
\begin{align*}
    \operatorname{Ext}_{\mathcal{O_{X\times X}}}^{q}(\mathcal{E}_{i}\boxtimes \mathcal{G}_{i},\mathcal{E}_{j}\boxtimes \mathcal{G}_{j})&\cong\bigoplus_{p\in\mathbb{Z}}\operatorname{Ext}_{\mathcal{O_{X}}}^{q+p}(\mathcal{E}_{i},\mathcal{E}_{j})\otimes_{R}\operatorname{Ext}_{\mathcal{O_{Y}}}^{-p}(\mathcal{G}_{i},\mathcal{G}_{j})\\
    &\overset{id\otimes F}{\longrightarrow}\bigoplus_{p\in\mathbb{Z}}\operatorname{Ext}_{\mathcal{O_{X}}}^{q+p}(\mathcal{E}_{i},\mathcal{E}_{j})\otimes_{R}\operatorname{Ext}_{\mathcal{O_{Y}}}^{-p}(F(\mathcal{G}_{i}),F(\mathcal{G}_{j}))\\
    &\cong\operatorname{Ext}_{\mathcal{O}_{\mathcal{X}\times_{R}\mathcal{Y}}}^{q}(\mathcal{E}_{i}\boxtimes F(\mathcal{G}_{i}),\mathcal{E}_{j}\boxtimes F(\mathcal{G}_{j})).
\end{align*}
Therefore we obtain a complex
\begin{equation}\label{eq4.3}
    \begin{tikzcd}
    \cdots\arrow[r] & \mathcal{E}_{i}\boxtimes F(\mathcal{G}_{i})\arrow[r] & \cdots\arrow[r] & \mathcal{E}_{0}\boxtimes F(\mathcal{G}_{0})
    \end{tikzcd}
\end{equation}
in $D^{b}_{\operatorname{Coh}}(\mathcal{X}\times_{R}\mathcal{Y})$, where the maps are given by applying $id\otimes F$ to the corresponding map in \eqref{eq4.1}. Moreover, we have$$\operatorname{Ext}_{\mathcal{O}_{\mathcal{X}\times_{R}\mathcal{Y}}}^{c}(\mathcal{E}_{i}\boxtimes F(\mathcal{G}_{i}),\mathcal{E}_{j}\boxtimes F(\mathcal{G}_{j}))=0$$for all $c<0$. By \cite[Lemma\ 3.2]{CS07}, there exists an infinite Postnikov system
\begin{equation}\label{eq4.4}
    \begin{tikzcd}
   \ & K_{n}\arrow[dr]\arrow[l,dashed,"{[1]}"] & & \cdots\arrow[ll,dashed,"{[1]}"]\arrow[dr] & & K_{0}\arrow[ll,dashed,"{[1]}"]\arrow[dr,"\cong"]\\
    & \cdots\arrow[r] & \mathcal{E}_{n}\boxtimes F(\mathcal{G}_{n})\arrow[ur]\arrow[rr] & & \cdots\arrow[ur]\arrow[rr]& & \mathcal{E}_{0}\boxtimes F(\mathcal{G}_{0}).
    \end{tikzcd}
\end{equation}
We now work towards obtaining a direct sum decomposition for $K_{n}$ in $D^{b}_{\operatorname{Coh}}(\mathcal{X}\times_{R}\mathcal{Y})$ that is similar to the one for $M_{n}$ in $D^{b}_{\operatorname{Coh}}(\mathcal{X}\times_{R}\mathcal{X})$. We begin with several useful auxiliary results.

\begin{lemma}\label{4.1}
Let $\mathcal{X},\mathcal{Y}$ be smooth tame algebraic stacks of finite type over $R$. Let $K$ be a complex in $D_{coh}^{b}(\mathcal{X})$. Let $\pi\colon\mathcal{X}\to X$ be the associated coarse moduli space. Assume that $\mathcal{X}$ has the resolution property, and $X$ is projective. Let $L$ be an ample line bundle on $X$. For a fixed integer $i$, if $H^{i}(\Phi_{K}^{\mathcal{X}\to\mathcal{Y}}(\mathcal{V}\otimes\pi^{*}\mathcal{L}^{\otimes l}))=0$ for every vector bundle $\mathcal{V}$ on $\mathcal{X}$ and every $l\gg 0$, then $H^{i}(K)=0$.
\end{lemma}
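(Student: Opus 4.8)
The plan is to compute $H^{i}(\Phi_{K}^{\mathcal{X}\to\mathcal{Y}}(\mathcal{V}\otimes\pi^{*}L^{\otimes l}))$ for $l\gg 0$ in terms of the single cohomology sheaf $H^{i}(K)$, and then to recover $H^{i}(K)$ from such data. Write $\Phi_{K}=Rp_{*}(Lq^{*}(-)\otimes^{\mathbb{L}}K)$ with $q\colon\mathcal{X}\times_{R}\mathcal{Y}\to\mathcal{X}$ and $p\colon\mathcal{X}\times_{R}\mathcal{Y}\to\mathcal{Y}$. Since $\mathcal{V}\otimes\pi^{*}L^{\otimes l}$ is locally free, $Lq^{*}$ and the tensor factor involving it are underived, so $\Phi_{K}(\mathcal{V}\otimes\pi^{*}L^{\otimes l})\cong Rp_{*}(q^{*}\mathcal{V}\otimes(\pi q)^{*}L^{\otimes l}\otimes K)$. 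First I would reduce to a projective morphism of schemes over $\mathcal{Y}$: factor $p=p'\circ\rho$, where $\rho=\pi\times\operatorname{id}_{\mathcal{Y}}\colon\mathcal{X}\times_{R}\mathcal{Y}\to X\times_{R}\mathcal{Y}$ and $p'\colon X\times_{R}\mathcal{Y}\to\mathcal{Y}$ is the projection, and let $p_{X}\colon X\times_{R}\mathcal{Y}\to X$ be the other projection. Since $\rho$ is the base change of the tame coarse moduli morphism $\pi$ along $p_{X}$, its pushforward $\rho_{*}$ is exact, while $p'$ is projective with $p_{X}^{*}L$ relatively ample. Using $(\pi q)^{*}L=\rho^{*}p_{X}^{*}L$ and the projection formula,
\[
\Phi_{K}(\mathcal{V}\otimes\pi^{*}L^{\otimes l})\cong Rp'_{*}\big(\rho_{*}(q^{*}\mathcal{V}\otimes K)\otimes p_{X}^{*}L^{\otimes l}\big).
\]

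Because $\rho_{*}$ is exact and $\mathcal{V}$ is locally free, $\rho_{*}(q^{*}\mathcal{V}\otimes K)$ is a bounded complex whose cohomology sheaves are $\rho_{*}(q^{*}\mathcal{V}\otimes H^{b}(K))$. By relative Serre vanishing for the projective morphism $p'$ there is $l_{0}=l_{0}(\mathcal{V})$ such that $R^{a}p'_{*}$ annihilates each of these sheaves twisted by $p_{X}^{*}L^{\otimes l}$ for $a>0$ and $l\geq l_{0}$; the hyper-direct-image spectral sequence then collapses to
\[
H^{i}\big(\Phi_{K}(\mathcal{V}\otimes\pi^{*}L^{\otimes l})\big)\cong p'_{*}\big(\rho_{*}(q^{*}\mathcal{V}\otimes H^{i}(K))\otimes p_{X}^{*}L^{\otimes l}\big)\qquad(l\geq l_{0}).
\]
Combined with the hypothesis, this means $p'_{*}\big(\rho_{*}(q^{*}\mathcal{V}\otimes H^{i}(K))\otimes p_{X}^{*}L^{\otimes l}\big)=0$ for all $l\gg 0$. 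Since $p_{X}^{*}L$ is relatively ample for the projective morphism $p'$, a coherent sheaf all of whose sufficiently positive $p'$-relative twists have vanishing pushforward must itself be zero (Serre's theorem, applied after base change to a smooth cover of $\mathcal{Y}$ by a scheme). Hence $\rho_{*}(q^{*}\mathcal{V}\otimes H^{i}(K))=0$ for every vector bundle $\mathcal{V}$ on $\mathcal{X}$.

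It remains to pass from this vanishing to $H^{i}(K)=0$, and this is the step where care is needed: the coarse-moduli pushforward $\rho_{*}$ is not conservative, so the vanishing above does not by itself force $H^{i}(K)=0$. To bridge the gap I would use a generating vector bundle $\mathcal{V}_{0}$ on $\mathcal{X}$. Generating sheaves are stable under base change, so $q^{*}\mathcal{V}_{0}$ is generating relative to $\rho$; concretely, the canonical map $\rho^{*}\rho_{*}\sheafhom(q^{*}\mathcal{V}_{0},H^{i}(K))\otimes q^{*}\mathcal{V}_{0}\to H^{i}(K)$ is surjective. But $\sheafhom(q^{*}\mathcal{V}_{0},H^{i}(K))\cong q^{*}\mathcal{V}_{0}^{\vee}\otimes H^{i}(K)$ and $\mathcal{V}_{0}^{\vee}$ is a vector bundle on $\mathcal{X}$, so the previous paragraph, taken with $\mathcal{V}=\mathcal{V}_{0}^{\vee}$, gives $\rho_{*}\sheafhom(q^{*}\mathcal{V}_{0},H^{i}(K))=0$. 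Thus the source of the surjection vanishes, forcing $H^{i}(K)=0$.

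The main obstacle is precisely this last point — replacing the non-conservative $\rho_{*}$ by the generating vector bundle and knowing that it stays generating after the base change defining $\rho$. Everything else is bookkeeping once the factorization $p=p'\circ\rho$ is set up, although one should also confirm that $\rho$ inherits exactness of its pushforward and that $p'$ is genuinely projective, so that the projection formula, relative Serre vanishing, and the Serre-detection argument all apply.
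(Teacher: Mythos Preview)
Your argument is correct and follows a genuinely different route from the paper's. The paper first localizes on $\mathcal{Y}$ to make it affine, then invokes \cite[Corollary~4.5]{DHM22} to obtain a finite flat cover $c_{1}\colon U\to\mathcal{X}$ by a scheme projective over the base; a projection-formula computation identifies $\Phi_{K}^{\mathcal{X}\to\mathcal{Y}}(c_{1,*}\mathcal{G}\otimes\pi^{*}L^{\otimes l})$ with $\Phi_{Lc^{*}K}^{U\to\mathcal{Y}}(\mathcal{G}\otimes\tilde{c}_{1}^{*}L^{\otimes l})$, so the scheme case \cite[Lemma~2.5]{CS07} yields $H^{i}(Lc^{*}K)=0$, and faithful flatness of $c$ finishes. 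In short, the paper goes \emph{up} to a projective scheme covering $\mathcal{X}$ and quotes the known lemma there, whereas you go \emph{down} through the coarse space, factoring $p=p'\circ\rho$ and combining exactness of $\rho_{*}$ with relative Serre theory for $p'$. Your approach avoids the nontrivial external input from \cite{DHM22} and essentially reproves the projective-scheme lemma in place; the price is that you must handle the non-conservativity of $\rho_{*}$ by hand, which you do correctly via the generating sheaf. One small addendum worth making explicit: the existence of the generating vector bundle $\mathcal{V}_{0}$ is not among the lemma's stated hypotheses, but it follows because the resolution property makes $\mathcal{X}$ a quotient stack \cite[Theorem~1.1]{tot04}, and separated tame quotient stacks admit generating sheaves as discussed after Definition~\ref{3.2}.
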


\begin{proof}
    The statement is local on $\mathcal{Y}$. So we may assume that $\mathcal{Y}$ is a noetherian affine scheme. Since $\mathcal{X}$ has the resolution property, it is a quotient stack \cite[Theorem\ 1.1]{tot04}. By \cite[Corollary\ 4.5]{DHM22}, it admits a finite flat covering $c_{1}\colon U\to\mathcal{X}$ where $U$ is projective over $\mathcal{Y}$. Let $\tilde{c}_{1}\colon U\to X$ be the composition. We see that $\tilde{c}_{1}$ is quasifinite and proper and thus finite. It follows that $\tilde{c}_{1}^{*}L$ is ample on $U$. Now consider the following commutative diagram.
    \begin{equation*}
        \begin{tikzcd}
            U\arrow[d,"c_{1}",swap] & U\times\mathcal{Y}\arrow[l,"\tilde{q}",swap]\arrow[d,"c",swap]\arrow[rd,"\tilde{p}"] & \\
            \mathcal{X} & \mathcal{X}\times\mathcal{Y}\arrow[l,"q"]\arrow[r,"p",swap] & \mathcal{Y}.
        \end{tikzcd}
    \end{equation*}
    Let $\mathcal{G}$ be a vector bundle on $U$. By the projection formula, we have$$c_{1,*}(\mathcal{G}\otimes\tilde{c}_{1}^{*}L^{\otimes l})\cong c_{1,*}\mathcal{G}\otimes\pi^{*}L^{\otimes l}.$$Note that $c$ is also flat, we see that $c_{1,*}\mathcal{G}$ is a vector bundle on $\mathcal{X}$. By assumption, we have $H^{i}(\Phi_{K}^{\mathcal{X}\to\mathcal{Y}}(c_{1,*}\mathcal{G}\otimes\tilde{c}_{1}^{*}\mathcal{L}^{\otimes l}))=0$ for every $j\gg 0$. Using flat base change and the projection formula, we obtain the following
    \begin{align*}
        \Phi_{K}^{\mathcal{X}\to\mathcal{Y}}(c_{1,*}\mathcal{G}\otimes\pi^{*}\mathcal{L}^{\otimes l})&\simeq Rp_{*}(q^{*}c_{1,*}\mathcal{G}\otimes q^{*}\pi^{*}L^{\otimes l}\otimes^{\mathbb{L}}K)\\
        &\simeq Rp_{*}(c_{*}\tilde{q}^{*}\mathcal{G}\otimes q^{*}\pi^{*}L^{\otimes l}\otimes^{\mathbb{L}}K)\\
        &\simeq Rp_{*}(c_{*}(\tilde{q}^{*}\mathcal{G}\otimes c^{*}q^{*}\pi^{*}L^{\otimes l}\otimes^{\mathbb{L}}Lc^{*}K))\\
        &\simeq Rp_{*}(c_{*}(\tilde{q}^{*}\mathcal{G}\otimes \tilde{q}^{*}c_{1}^{*}\pi^{*}L^{\otimes l}\otimes^{\mathbb{L}}Lc^{*}K))\\
        &\simeq R\tilde{p}_{*}(\tilde{q}^{*}(\mathcal{G}\otimes \tilde{c}_{1}^{*}L^{\otimes l})\otimes^{\mathbb{L}}Lc^{*}K)\\
        &\simeq\Phi_{Lc^{*}K}^{U\to\mathcal{Y}}(\mathcal{G}\otimes\tilde{c}_{1,*}\mathcal{L}^{\otimes l}).
    \end{align*}
    Since $U$ is projective, we may apply the untwisted version of \cite[Lemma\ 2.5]{CS07} to conclude that $H^{i}(Lc^{*}K)=0$. But $c$ is faithfully flat, so we have $H^{i}(K)=0$ as desired. 
\end{proof}

\begin{lemma}\label{4.2}
Let $\mathcal{X},\mathcal{Y}$ be smooth tame algebraic stacks of finite type over $R$. Let $\pi\colon\mathcal{X}\to X$ be the associated coarse moduli space. Let $F\colon D^{b}_{coh}(\mathcal{X})\to D^{b}_{coh}(\mathcal{Y})$ be a $k$-linear exact functor such that $\operatorname{Hom}_{\mathcal{O}_{\mathcal{Y}}}(F(\mathcal{A}),F(\mathcal{B})[i])=0$ for all $\mathcal{A},\mathcal{B}\in\operatorname{Coh}(\mathcal{X})$ and $i<0$. Suppose $\mathcal{X}$ admits a generating vector bundle $\mathcal{V}$ and $X$ is projective. Then for every fixed integer $n>\Cohdim(\mathcal{X}\times_{R}\mathcal{X})$, there exists a positive integer $N_{n}$, depending on $n$, and a coherent sheaf $\mathcal{K}_{n}$ such that$$\Phi_{K_{n}}^{\mathcal{X\to Y}}(\mathcal{V}\otimes\pi^{*}L^{\otimes l})\simeq F(\mathcal{V}\otimes\pi^{*}L^{\otimes l})\oplus F(\mathcal{K}_{n})[n]$$for every $l>N_{n}$.
\end{lemma}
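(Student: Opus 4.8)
The plan is to mimic the argument that produced the decomposition $M_n \cong \mathcal{O}_{\Delta_{\mathcal{X}}} \oplus \mathcal{F}_n[n]$ in $D^b_{\operatorname{Coh}}(\mathcal{X}\times_R\mathcal{X})$, but now on the $\mathcal{Y}$-side, and then to unwind $\Phi^{\mathcal{X}\to\mathcal{Y}}_{K_n}$ applied to a twist of the generating bundle. First I would observe that since $\mathcal{X}$ has the resolution property (it is a quotient stack by Totaro), the complex in \eqref{eq4.3} lives in $D^b_{\operatorname{Coh}}(\mathcal{X}\times_R\mathcal{Y})$ and, exactly as in the $M_n$ case, the brutal truncation $K_n$ of the Postnikov system \eqref{eq4.4} has cohomology concentrated in degrees $0$ and $-n$: in degree $0$ it is the image of $\mathcal{O}_{\Delta_{\mathcal{X}}}$ under $\operatorname{id}\otimes F$-type bookkeeping, and in degree $-n$ it is a single coherent sheaf, call it $\mathcal{K}'_n$. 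The point is that for $n > \Cohdim(\mathcal{X}\times_R\mathcal{X})$ (hence also larger than the cohomological dimension controlling the relevant $\operatorname{Ext}$'s between box products on $\mathcal{X}\times_R\mathcal{Y}$, since $F$ does not increase these by the $i<0$ vanishing hypothesis), the two-term object $K_n$ splits as a direct sum $K_n \cong A_n \oplus \mathcal{K}'_n[n]$ in the derived category, where $A_n$ is the degree-$0$ part. This is the same splitting-of-a-two-step-Postnikov-tower argument used for $M_n$, with the vanishing of negative $\operatorname{Ext}$'s guaranteeing there is no nontrivial extension class.

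Next I would identify $A_n$ and compute $\Phi^{\mathcal{X}\to\mathcal{Y}}_{K_n}$ on the test objects $\mathcal{V}\otimes\pi^*L^{\otimes l}$. The resolution of the diagonal gives, on the $\mathcal{X}\times_R\mathcal{X}$ side, that $\Phi^{\mathcal{X}\to\mathcal{X}}_{M_n}$ agrees with the identity functor in a range, so $\Phi^{\mathcal{X}\to\mathcal{X}}_{M_n}(\mathcal{V}\otimes\pi^*L^{\otimes l}) \simeq (\mathcal{V}\otimes\pi^*L^{\otimes l}) \oplus (\text{something involving }\mathcal{F}_n)[n]$. Applying $F$ and comparing with the construction of \eqref{eq4.3} from \eqref{eq4.1} via $\operatorname{id}\otimes F$, the kernel $K_n$ is built so that $\Phi^{\mathcal{X}\to\mathcal{Y}}_{K_n}$ "is" $F$ composed with $\Phi^{\mathcal{X}\to\mathcal{X}}_{M_n}$ on the subcategory generated by the $\mathcal{G}_i$'s. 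Concretely, each term $\mathcal{E}_i\boxtimes F(\mathcal{G}_i)$ satisfies $\Phi_{\mathcal{E}_i\boxtimes F(\mathcal{G}_i)}(\mathcal{W}) \simeq R\Gamma(\mathcal{X}, \mathcal{W}\otimes\mathcal{E}_i)\otimes^{\mathbb{L}}_R F(\mathcal{G}_i)$, and for $\mathcal{W} = \mathcal{V}\otimes\pi^*L^{\otimes l}$ with $l\gg 0$ Lemma \ref{3.7} kills all higher cohomology, so $R\Gamma$ becomes an honest module placed in degree $0$; this is the mechanism by which we can choose $N_n$. Tracking this through the convolution/Postnikov system term by term, $\Phi^{\mathcal{X}\to\mathcal{Y}}_{K_n}(\mathcal{V}\otimes\pi^*L^{\otimes l})$ becomes $F$ applied to the convolution of the $R\Gamma(\mathcal{X},\mathcal{V}\otimes\pi^*L^{\otimes l}\otimes\mathcal{E}_i)\otimes_R\mathcal{G}_i$, which is precisely $F$ applied to the truncated diagonal resolution evaluated at $\mathcal{V}\otimes\pi^*L^{\otimes l}$, i.e. $F\big((\mathcal{V}\otimes\pi^*L^{\otimes l}) \oplus \mathcal{F}_n'[n]\big)$ for the appropriate coherent sheaf $\mathcal{F}_n'$ obtained from $\mathcal{F}_n$. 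Setting $\mathcal{K}_n := \mathcal{F}_n'$ and $N_n$ large enough (at least the bound from Lemma \ref{3.7} applied to the finitely many sheaves $\mathcal{E}_i$, $0\le i\le n$, twisted against $\mathcal{V}$) yields the claimed isomorphism.

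The main obstacle I anticipate is the bookkeeping needed to show that $\Phi^{\mathcal{X}\to\mathcal{Y}}_{K_n}$ evaluated on a twist of $\mathcal{V}$ genuinely equals $F$ applied to the corresponding convolution, rather than merely being abstractly isomorphic to a two-term complex with the right cohomology. The subtle point is commuting $F$ past the formation of the convolution: $F$ is only an exact functor of triangulated categories, not a priori compatible with taking $R\Gamma\otimes^{\mathbb{L}}_R(-)$, so one must arrange that after the Serre-vanishing reduction (Lemma \ref{3.7}) all the relevant $R\Gamma$'s are concentrated in a single degree, turning the derived tensor/convolution into a finite iterated extension that $F$, being exact, does preserve — and then match the resulting Postnikov tower with the one defining $K_n$ via the $\operatorname{id}\otimes F$ maps on $\operatorname{Ext}$-groups, using the Künneth identification already set up before \eqref{eq4.3}. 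Checking that the $i<0$ vanishing for $F$ forces the connecting maps to behave and that $n > \Cohdim(\mathcal{X}\times_R\mathcal{X})$ suffices for the final splitting is where the argument must be most careful; everything else is a routine projection-formula and adjunction computation.
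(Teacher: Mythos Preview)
Your first paragraph contains a genuine error that the rest of your outline does not actually need. You claim that $K_n$ has cohomology concentrated in degrees $0$ and $-n$ and therefore splits in $D^b_{\operatorname{Coh}}(\mathcal{X}\times_R\mathcal{Y})$, by analogy with $M_n$. But the terms of the Postnikov system \eqref{eq4.4} are $\mathcal{E}_i\boxtimes F(\mathcal{G}_i)$, and $F(\mathcal{G}_i)$ is only an object of $D^b_{\operatorname{Coh}}(\mathcal{Y})$, not a sheaf; nothing forces it to sit in a single degree. So $K_n$ is a convolution of genuine complexes, and there is no reason its cohomology lives only in degrees $0$ and $-n$. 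Indeed, the paper only obtains a much weaker concentration statement for $K_n$ (in $[-m,m]\cup[-m-n,m-n]$) \emph{after} Lemma~\ref{4.2}, in Lemma~\ref{4.3}, using the boundedness of $F$ together with Lemma~\ref{4.1}. Your proposed splitting $K_n\cong A_n\oplus\mathcal{K}'_n[n]$ is therefore unavailable at this stage, and the statement of Lemma~\ref{4.2} does not assert it.

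Fortunately, your second and third paragraphs already contain the correct strategy, which is exactly what the paper does: one never splits $K_n$ itself. Instead, fix $l$ large and apply the exact functor $N\mapsto\Phi_N^{\mathcal{X}\to\mathcal{Y}}(\mathcal{V}\otimes\pi^*L^{\otimes l})$ to the Postnikov system \eqref{eq4.4}, and apply $N\mapsto F(\Phi_N^{\mathcal{X}\to\mathcal{X}}(\mathcal{V}\otimes\pi^*L^{\otimes l}))$ to \eqref{eq4.2}. Your computation (projection formula plus Lemma~\ref{3.7}) shows that for $l>N_n$ both produce Postnikov systems over the \emph{same} bounded complex $\Gamma(\mathcal{X},\mathcal{V}\otimes\pi^*L^{\otimes l}\otimes\mathcal{E}_\bullet)\otimes_R F(\mathcal{G}_\bullet)$ in $D^b_{\operatorname{Coh}}(\mathcal{Y})$. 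The hypothesis $\operatorname{Hom}(F(\mathcal{A}),F(\mathcal{B})[i])=0$ for $i<0$ gives exactly the negative-Ext vanishing required by \cite[Lemma~3.2]{CS07}, so the two convolutions $\Phi_{K_n}^{\mathcal{X}\to\mathcal{Y}}(\mathcal{V}\otimes\pi^*L^{\otimes l})$ and $F(\Phi_{M_n}^{\mathcal{X}\to\mathcal{X}}(\mathcal{V}\otimes\pi^*L^{\otimes l}))$ are isomorphic. Now the splitting $M_n\cong\mathcal{O}_{\Delta_{\mathcal{X}}}\oplus\mathcal{F}_n[n]$, which is legitimate because the terms of \eqref{eq4.2} are honest sheaves, finishes the job; the sheaf $\mathcal{K}_n$ in the statement is $\Phi_{\mathcal{F}_n}^{\mathcal{X}\to\mathcal{X}}(\mathcal{V}\otimes\pi^*L^{\otimes l})$. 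In short: drop your first paragraph entirely, and make the uniqueness-of-convolution step from your third paragraph the actual argument rather than an anticipated obstacle.
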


\begin{proof}
    Consider the exact functor
\begin{align*}
    \Psi\colon D^{b}_{\operatorname{Coh}}(\mathcal{X}\times_{R}\mathcal{Y})&\longrightarrow D^{b}_{\operatorname{Coh}}(\mathcal{Y})\\
    N&\longmapsto\Phi_{N}^{\mathcal{X\to Y}}(\mathcal{V}\otimes\pi^{*}L^{\otimes l})=Rp_{*}(Lq^{*}\mathcal{V}\otimes\pi^{*}L^{\otimes l}\otimes_{\mathcal{O}_{\mathcal{X}\times_{R}\mathcal{Y}}}^{\mathbb{L}}N).
\end{align*}
Applying the functor $\Psi$ to \eqref{eq4.4} gives us a Postnikov system for the complex
\begin{equation}\label{eq4.5}
    \begin{tikzcd}
    \cdots\arrow[r] & \Phi_{\mathcal{E}_{n}\boxtimes F(\mathcal{G}_{n})}^{\mathcal{X\to Y}}(\mathcal{V}\otimes\pi^{*}L^{\otimes l})\arrow[r] & \cdots\arrow[r] & \Phi_{\mathcal{E}_{0}\boxtimes F(\mathcal{G}_{0})}^{\mathcal{X\to Y}}(\mathcal{V}\otimes\pi^{*}L^{\otimes l}).
    \end{tikzcd}
\end{equation}
For any $0\leq k\leq n$, we have
\begin{align*}
    \Phi_{\mathcal{E}_{k}\boxtimes F(\mathcal{G}_{k})}^{\mathcal{X\to Y}}(\mathcal{V}\otimes\pi^{*}L^{\otimes l})&=Rp_{*}(Lq^{*}(\mathcal{V}\otimes\pi^{*}L^{\otimes l})\otimes_{\mathcal{O}_{\mathcal{X}\times_{R}\mathcal{Y}}}^{\mathbb{L}}(\mathcal{E}_{k}\boxtimes F(\mathcal{G}_{k})))\\
    &\simeq Rp_{*}(Lq^{*}(\mathcal{V}\otimes\pi^{*}L^{\otimes l}\otimes\mathcal{E}_{k})\otimes_{\mathcal{O}_{\mathcal{X}\times_{R}\mathcal{Y}}}^{\mathbb{L}}Lp^{*}F(\mathcal{G}_{k}))\\
    &\simeq Rp_{*}Lq^{*}(\mathcal{V}\otimes\pi^{*}L^{\otimes l}\otimes\mathcal{E}_{k})\otimes_{\mathcal{O}_{\mathcal{X}\times_{R}\mathcal{Y}}}^{\mathbb{L}}F(\mathcal{G}_{k}).
\end{align*}
By Lemma \ref{3.7}, there exists an integer $N_{n}$ such that for every $l>N_{n}$, we have$$R^{i}p_{*}Lq^{*}(\mathcal{V}\otimes\pi^{*}L^{\otimes l}\otimes\mathcal{E}_{k})=0$$for every $i>0$ and $0\leq k\leq n$. It follows that$$\Phi_{\mathcal{E}_{k}\boxtimes F(\mathcal{G}_{k})}^{\mathcal{X\to Y}}(\mathcal{V}\otimes\pi^{*}L^{\otimes l})\simeq\Gamma(\mathcal{X}, \mathcal{V}\otimes\pi^{*}L^{\otimes l}\otimes\mathcal{E}_{k})\otimes_{R}F(\mathcal{G}_{k})$$for every $i>0$ and $0\leq k\leq n$. On the other hand, consider the exact functor
\begin{align*}
    \Psi^\prime\colon D^{b}_{\operatorname{Coh}}(\mathcal{X\times X})&\longrightarrow D^{b}_{\operatorname{Coh}}(\mathcal{Y})\\
    N&\longmapsto F(\Phi_{N}^{\mathcal{X}\to \mathcal{X}}(\mathcal{V}\otimes\pi^{*}L^{\otimes l})).
\end{align*}
Applying the functor $\Psi^\prime$ to \eqref{eq4.2} gives us a Postnikov system for the complex
\begin{equation}\label{eq4.6}
    \begin{tikzcd}
    \cdots\arrow[r] & F(\Phi_{\mathcal{E}_{n}\boxtimes\mathcal{G}_{n}}^{\mathcal{X}\to \mathcal{X}}(\mathcal{V}\otimes\pi^{*}L^{\otimes l}))\arrow[r] & \cdots\arrow[r] & F(\Phi_{\mathcal{E}_{0}\boxtimes\mathcal{G}_{0}}^{\mathcal{X}\to \mathcal{X}}(\mathcal{V}\otimes\pi^{*}L^{\otimes l})).
    \end{tikzcd}
\end{equation}
Similarly for any $0\leq k\leq n$, we have
\begin{align*}
    F(\Phi_{\mathcal{E}_{k}\boxtimes\mathcal{G}_{k}}^{\mathcal{X}\to \mathcal{X}}(\mathcal{V}\otimes\pi^{*}L^{\otimes l}))&\simeq F(\Gamma(\mathcal{X},\mathcal{V}\otimes\pi^{*}L^{\otimes l}\otimes\mathcal{E}_{k})\otimes_{R}\mathcal{G}_{k})\\
    &\simeq\Gamma(\mathcal{X},\mathcal{V}\otimes\pi^{*}L^{\otimes l}\otimes\mathcal{E}_{k})\otimes_{R}F(\mathcal{G}_{k}).
\end{align*}
Therefore we see that $\Phi_{K_{n}}^{\mathcal{X\to Y}}(\mathcal{V}\otimes\pi^{*}L^{\otimes l})$ and $F(\Phi_{M_{n}}^{\mathcal{X}\to \mathcal{X}}(\mathcal{V}\otimes\pi^{*}L^{\otimes l}))$ are two right convolutions to the same complex in $D^{b}_{\operatorname{Coh}}(\mathcal{Y})$. Moreover, we have$$\operatorname{Ext}_{\mathcal{O_{Y}}}^{c}(F(\Gamma(\mathcal{X},\mathcal{V}\otimes\pi^{*}L^{\otimes l}\otimes\mathcal{E}_{i})\otimes_{R}\mathcal{G}_{i}),F(\Gamma(\mathcal{X},\mathcal{V}\otimes\pi^{*}L^{\otimes l}\otimes\mathcal{E}_{l})\otimes_{R}\mathcal{G}_{l}))=0$$for all $c<0$. By \cite[Lemma\ 3.2]{CS07}, we conclude that$$\Phi_{K_{n}}^{\mathcal{X\to Y}}(\mathcal{V}\otimes\pi^{*}L^{\otimes l})\simeq F(\Phi_{M_{n}}^{\mathcal{X}\to \mathcal{X}}(\mathcal{V}\otimes\pi^{*}L^{\otimes l}))$$for every $l>N_{n}$. Since $n>\Cohdim(\mathcal{X}\times_{R}\mathcal{X})$, we have $M_{n}\cong\mathcal{O}_{\Delta_{\mathcal{X}}}\oplus\mathcal{F}_{n}[n]$. It follows that$$\Phi_{K_{n}}^{\mathcal{X\to Y}}(\mathcal{V}\otimes\pi^{*}L^{\otimes l}))\simeq F(\mathcal{V}\otimes\pi^{*}L^{\otimes l})\oplus F(\Phi_{\mathcal{F}_{n}}^{\mathcal{X}\to \mathcal{X}}(\mathcal{V}\otimes\pi^{*}L^{\otimes l}))[n].$$This finishes the proof.
\end{proof}

Recall that a functor $F\colon D^{b}(\mathcal{X})\to\mathcal{T}$, where $\mathcal{T}$ is a  triangulated category, is {\it bounded} if there exists an integer $m>0$ such that $F(A)$ is concentrated in $[-m,m]$ for any coherent sheaf $A$ on $\mathcal{X}$.

\begin{lemma}\label{4.3}
In the situation of Lemma \ref{4.2}, suppose that $F$ is bounded. Then there exists an integer $n$ and a complex $\overline{K}_{n}$, depending on $n$, such that$$\Phi_{\overline{K}_{n}}^{\mathcal{X\to Y}}(\mathcal{V}\otimes\pi^{*}L^{\otimes l})\simeq F(\mathcal{V}\otimes\pi^{*}L^{\otimes l})$$for every $l>N_{n}$.
\end{lemma}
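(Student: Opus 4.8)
The plan is to take $\overline{K}_{n}$ to be a cohomological truncation of the convolution $K_{n}$ appearing in \eqref{eq4.4}, once $n$ is chosen large enough. Since $F$ is bounded, fix $m>0$ with $F(A)$ concentrated in degrees $[-m,m]$ for every coherent sheaf $A$ on $\mathcal{X}$. As already noted before Lemma \ref{4.1}, the stack $\mathcal{X}\times_{R}\mathcal{Y}$ has finite cohomological dimension, so the functor $\Phi_{(-)}^{\mathcal{X}\to\mathcal{Y}}(\mathcal{V}\otimes\pi^{*}L^{\otimes l})=Rp_{*}(q^{*}(\mathcal{V}\otimes\pi^{*}L^{\otimes l})\otimes^{\mathbb{L}}(-))$ has cohomological amplitude contained in $[0,D]$ for some fixed $D<\infty$, because $q^{*}$ of a vector bundle is exact and $Rp_{*}$ has finite amplitude. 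Choose an integer $n>\max\{\Cohdim(\mathcal{X}\times_{R}\mathcal{X}),\,2m+D\}$, so that Lemma \ref{4.2} applies and $n>2m$. I will also use that the conclusion of Lemma \ref{4.2} remains valid with an arbitrary vector bundle $\mathcal{V}'$ on $\mathcal{X}$ in place of the generating bundle $\mathcal{V}$: its proof uses only that $\mathcal{V}$ is locally free (through Lemma \ref{3.7} and the computation of $\Phi_{\mathcal{E}_{k}\boxtimes F(\mathcal{G}_{k})}^{\mathcal{X}\to\mathcal{Y}}$), the integer $N_{n}$ now being allowed to depend on $\mathcal{V}'$.

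The central step is to locate the cohomology sheaves of $K_{n}$ itself. On the one hand, directly from \eqref{eq4.4}, $K_{n}$ is a convolution of a complex whose term in cohomological degree $-i$ is $\mathcal{E}_{i}\boxtimes F(\mathcal{G}_{i})$, an object concentrated in degrees $[-m,m]$; hence $K_{n}\in D^{[-m-n,\,m]}_{\operatorname{Coh}}(\mathcal{X}\times_{R}\mathcal{Y})$. On the other hand, for every vector bundle $\mathcal{V}'$ on $\mathcal{X}$ and all $l\gg 0$, the extended form of Lemma \ref{4.2} gives
$$\Phi_{K_{n}}^{\mathcal{X}\to\mathcal{Y}}(\mathcal{V}'\otimes\pi^{*}L^{\otimes l})\simeq F(\mathcal{V}'\otimes\pi^{*}L^{\otimes l})\oplus F(\mathcal{K}_{n})[n],$$
in which the first summand is concentrated in degrees $[-m,m]$ and the second — a coherent sheaf shifted by $n$ — is concentrated in degrees $[-m-n,\,m-n]$; since $n>2m$ these ranges are disjoint, so the left-hand side has vanishing cohomology in every degree $i$ with $m-n<i<-m$. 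Invoking Lemma \ref{4.1} for each such $i$ gives $H^{i}(K_{n})=0$ for $m-n<i<-m$. Hence in the truncation triangle
$$K_{n}'\longrightarrow K_{n}\longrightarrow\overline{K}_{n}\xrightarrow{+1},\qquad K_{n}':=\tau_{\leq-m-1}K_{n},\quad \overline{K}_{n}:=\tau_{\geq-m}K_{n},$$
the object $K_{n}'$ is in fact concentrated in degrees $[-m-n,\,-n+m]$ and $\overline{K}_{n}$ in degrees $[-m,m]$; this $\overline{K}_{n}$ will be the required kernel.

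To finish, I would apply the exact functor $\Phi_{(-)}^{\mathcal{X}\to\mathcal{Y}}(\mathcal{V}\otimes\pi^{*}L^{\otimes l})$ to this triangle. By the amplitude bound, $\Phi_{K_{n}'}^{\mathcal{X}\to\mathcal{Y}}(\mathcal{V}\otimes\pi^{*}L^{\otimes l})$ is concentrated in degrees $\leq-n+m+D$, hence in degrees $\leq-m-1$ since $n>2m+D$, whereas $\Phi_{\overline{K}_{n}}^{\mathcal{X}\to\mathcal{Y}}(\mathcal{V}\otimes\pi^{*}L^{\otimes l})$ is concentrated in degrees $\geq-m$. The resulting distinguished triangle is therefore the canonical truncation triangle of $\Phi_{K_{n}}^{\mathcal{X}\to\mathcal{Y}}(\mathcal{V}\otimes\pi^{*}L^{\otimes l})$ at level $-m$, so $\Phi_{\overline{K}_{n}}^{\mathcal{X}\to\mathcal{Y}}(\mathcal{V}\otimes\pi^{*}L^{\otimes l})\simeq\tau_{\geq-m}\Phi_{K_{n}}^{\mathcal{X}\to\mathcal{Y}}(\mathcal{V}\otimes\pi^{*}L^{\otimes l})$. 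For $l>N_{n}$, Lemma \ref{4.2} identifies the right-hand side with $\tau_{\geq-m}\bigl(F(\mathcal{V}\otimes\pi^{*}L^{\otimes l})\oplus F(\mathcal{K}_{n})[n]\bigr)=F(\mathcal{V}\otimes\pi^{*}L^{\otimes l})$, using $n>2m$ once more to discard the second summand. This gives the lemma.

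The step I expect to be genuinely delicate is the middle one: upgrading the information that $\Phi_{K_{n}}^{\mathcal{X}\to\mathcal{Y}}$ applied to the ample sequence has cohomology only in $[-m,m]\cup[-m-n,m-n]$ to the analogous statement about the cohomology sheaves of $K_{n}$ itself. This is precisely the job of the detection result Lemma \ref{4.1}, and it is what makes the mild extension of Lemma \ref{4.2} to arbitrary vector bundles necessary. After that, the argument is only the numerology $n>2m+D$ together with the standard fact that a distinguished triangle $A\to B\to C$ with $A\in D^{\leq a}$ and $C\in D^{\geq a+1}$ exhibits $C\simeq\tau_{\geq a+1}B$.
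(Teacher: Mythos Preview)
Your argument is correct and follows essentially the same route as the paper: locate the cohomology of $K_{n}$ in $[-m,m]\cup[-m-n,m-n]$ via Lemma~\ref{4.1} (applied through the extended form of Lemma~\ref{4.2}), then isolate the piece in $[-m,m]$ as $\overline{K}_{n}$. The only difference is in how the isolation is performed. The paper chooses $n>\Cohdim(\mathcal{X}\times_{R}\mathcal{Y})+2m$ so that the connecting morphism in the truncation triangle vanishes, yielding an honest direct sum $K_{n}\simeq\overline{K}_{n}\oplus C_{n}$; it then matches the two summands of $\Phi_{K_{n}}^{\mathcal{X}\to\mathcal{Y}}(\mathcal{V}\otimes\pi^{*}L^{\otimes l})$ with those of Lemma~\ref{4.2}. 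You instead keep the truncation triangle, use the amplitude bound $D$ on $Rp_{*}$ to show that applying $\Phi_{(-)}^{\mathcal{X}\to\mathcal{Y}}(\mathcal{V}\otimes\pi^{*}L^{\otimes l})$ sends it to the canonical truncation triangle of $\Phi_{K_{n}}^{\mathcal{X}\to\mathcal{Y}}(\mathcal{V}\otimes\pi^{*}L^{\otimes l})$ at level $-m$, and read off the answer. Your version never needs the splitting of $K_{n}$ itself, at the cost of one extra line of amplitude bookkeeping; the numerical conditions $n>2m+D$ and $n>\Cohdim(\mathcal{X}\times_{R}\mathcal{Y})+2m$ play the same role. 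You are also explicit about extending Lemma~\ref{4.2} to an arbitrary vector bundle $\mathcal{V}'$ in order to feed Lemma~\ref{4.1}; the paper uses this implicitly.
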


\begin{proof}
Since $F$ is bounded, it follows from Lemma \ref{4.2} that $\Phi_{K_{n}}^{\mathcal{X\to Y}}(\mathcal{V}\otimes\pi^{*}L^{\otimes l})$ is concentrated in $[-m,m]\cup[-m-n,m-n]$ if $n>\Cohdim(\mathcal{X}\times_{R}\mathcal{X})$. By Lemma \ref{4.1}, we see that $K_{n}$ is concentrated in $[-m,m]\cup[-m-n,m-n]$ as well. Choose an integer $n>\max\{\Cohdim(\mathcal{X}\times_{R}\mathcal{Y})+2m,\  \Cohdim(\mathcal{X}\times_{R}\mathcal{X})\}$. By construction, the intervals $[-m,m]$ and $[-m-n,m-n]$ are disjoint. In this case, we have a direct sum decomposition $K_{n}\simeq K\oplus C_{n}$ where $K$ is concentrated in $[-m,m]$ and $C_{n}$ is concentrated in $[-m-n,m-n]$. It follows that$$\Phi_{K_{n}}^{\mathcal{X\to Y}}(\mathcal{V}\otimes\pi^{*}L^{\otimes l}))\simeq\Phi_{\overline{K}_{n}}^{\mathcal{X\to Y}}(\mathcal{V}\otimes\pi^{*}L^{\otimes l}))\oplus\Phi_{C_{n}}^{\mathcal{X\to Y}}(\mathcal{V}\otimes\pi^{*}L^{\otimes l}))$$for any $l>N$. By construction, the first summand is concentrated in $[-m,m]$ and the second summand is concentrated in $[-m-n,m-n]$. Comparing this decomposition with the one obtained by Lemma \ref{4.2} gives us $\Phi_{\overline{K}_{n}}^{\mathcal{X\to Y}}(\mathcal{V}\otimes\pi^{*}L^{\otimes l}))\simeq F(\mathcal{V}\otimes\pi^{*}L^{\otimes l}))$ for every $l>N_{n}$ as desired.
\end{proof}

We now show that for every fixed integer $n>\max\{\Cohdim(\mathcal{X}\times_{R}\mathcal{Y})+2m,\  \Cohdim(\mathcal{X}\times_{R}\mathcal{X})\}$, the complex $\overline{K}_{n}$ obtained in Lemma \ref{4.3} is a Fourier--Mukai kernel of $F$ with some mild hypothesis. By Lemma \ref{4.2} and \ref{4.3}, there exists an integer $N_{n}$ such that$$\Phi_{\overline{K}_{n}}^{\mathcal{X\to Y}}(\mathcal{V}\otimes\pi^{*}L^{\otimes l})\simeq F(\mathcal{V}\otimes\pi^{*}L^{\otimes l})$$for every $l>N_{n}$. Let $\mathbf{S}_{0}$ be the full subcategory of $\operatorname{Coh}(\mathcal{X})$ whose objects are of the form $\mathcal{V}\otimes\pi^{*}L^{\otimes l}$ where $l>N_{n}$.

\begin{lemma}\label{4.4}
    Let $\mathcal{X},\mathcal{Y}$ be smooth tame algebraic stacks of finite type over $R$. Let $\pi\colon\mathcal{X}\to X$ be the associated coarse moduli space. Let $F\colon D^{b}_{coh}(\mathcal{X})\to D^{b}_{coh}(\mathcal{Y})$ be a $k$-linear exact functor such that $\operatorname{Hom}_{\mathcal{O}_{\mathcal{Y}}}(F(\mathcal{A}),F(\mathcal{B})[i])=0$ for all $\mathcal{A},\mathcal{B}\in\operatorname{Coh}(\mathcal{X})$ and $i<0$. Suppose $\mathcal{X}$ admits a generating vector bundle $\mathcal{V}$ and $X$ is projective. Then for every fixed integer $n>\max\{\Cohdim(\mathcal{X}\times_{R}\mathcal{Y})+2m,\  \Cohdim(\mathcal{X}\times_{R}\mathcal{X})\}$ there exists a bounded complex $\overline{K}_{n}\in D_{\operatorname{Coh}}^{b}(\mathcal{X}\times_{R}\mathcal{Y})$ such that $F|_{\mathbf{S}_{0}}\cong\Phi_{\overline{K}_{n}}|_{\mathbf{S}_{0}}$.
\end{lemma}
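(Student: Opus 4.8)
The strategy is to upgrade the object-level identification of Lemmas~\ref{4.2} and~\ref{4.3} to a natural isomorphism on $\mathbf{S}_0$, with the rigidity of Postnikov convolutions as the bookkeeping device. Fix $n$ as in the statement and let $\overline{K}_n$ be the complex produced in Lemma~\ref{4.3}; enlarging $N_n$ if necessary (this is harmless, as any sufficiently large bound works in Lemmas~\ref{4.2}--\ref{4.3}), I would first use cohomology and base change for the smooth proper morphism $\mathcal{X}\to\operatorname{Spec}(R)$ together with the ampleness of $L$ (cf.\ Lemma~\ref{3.7}) to arrange that $\Gamma(\mathcal{X},\mathcal{V}\otimes\pi^{*}L^{\otimes l}\otimes\mathcal{E}_{k})$ is a finite locally free $R$-module for all $0\le k\le n$ and $l>N_n$. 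Then, exactly as in the proof of Lemma~\ref{4.2}, for each $P\in\mathbf{S}_0$ the objects $\Phi_{K_n}^{\mathcal{X}\to\mathcal{Y}}(P)$ and $F(\Phi_{M_n}^{\mathcal{X}\to\mathcal{X}}(P))$ are right convolutions of one and the same complex $C^{\bullet}_{P}$ in $D^{b}_{\operatorname{Coh}}(\mathcal{Y})$, with $k$-th term $\Gamma(\mathcal{X},P\otimes\mathcal{E}_{k})\otimes_{R}F(\mathcal{G}_{k})$; here the identification of the terms is canonical and functorial in $P$, by the $R$-linearity of $F$ and the local freeness just arranged.

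The heart of the argument is to make the resulting comparison isomorphism $c_{P}\colon\Phi_{K_n}^{\mathcal{X}\to\mathcal{Y}}(P)\simeq F(\Phi_{M_n}^{\mathcal{X}\to\mathcal{X}}(P))$ functorial in $P\in\mathbf{S}_0$. First, for all $P,P'\in\mathbf{S}_0$, all $i,j$, and all $c<0$, the local freeness of the relevant global sections gives
\begin{align*}
    &\operatorname{Hom}_{\mathcal{O}_{\mathcal{Y}}}\bigl(\Gamma(\mathcal{X},P\otimes\mathcal{E}_{i})\otimes_{R}F(\mathcal{G}_{i}),\,\Gamma(\mathcal{X},P'\otimes\mathcal{E}_{j})\otimes_{R}F(\mathcal{G}_{j})[c]\bigr)\\
    &\qquad\cong\Gamma(\mathcal{X},P\otimes\mathcal{E}_{i})^{\vee}\otimes_{R}\Gamma(\mathcal{X},P'\otimes\mathcal{E}_{j})\otimes_{R}\operatorname{Hom}_{\mathcal{O}_{\mathcal{Y}}}\bigl(F(\mathcal{G}_{i}),F(\mathcal{G}_{j})[c]\bigr)=0
\end{align*}
by the standing hypothesis on $F$. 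By this vanishing and the rigidity of Postnikov systems (cf.\ \cite[Lemma~3.2]{CS07}), the convolution of $C^{\bullet}_{P}$ is unique up to a \emph{unique} isomorphism, and more generally any morphism of complexes $C^{\bullet}_{P}\to C^{\bullet}_{P'}$ lifts to a unique morphism of their convolutions. Second, since \eqref{eq4.2} and \eqref{eq4.4} are fixed Postnikov systems in $D^{b}_{\operatorname{Coh}}(\mathcal{X}\times_{R}\mathcal{X})$ and $D^{b}_{\operatorname{Coh}}(\mathcal{X}\times_{R}\mathcal{Y})$, and $\Phi^{\mathcal{X}\to\mathcal{X}}_{(-)}(-)$ and $\Phi^{\mathcal{X}\to\mathcal{Y}}_{(-)}(-)$ are bifunctors, any morphism $\alpha\colon P\to P'$ in $\mathbf{S}_0$ induces --- by applying $\Phi^{\mathcal{X}\to\mathcal{Y}}_{(-)}(\alpha)$ and $F\circ\Phi^{\mathcal{X}\to\mathcal{X}}_{(-)}(\alpha)$ to these systems --- one and the same morphism of Postnikov data $C^{\bullet}_{P}\to C^{\bullet}_{P'}$, together with the morphisms of convolutions $\Phi_{K_n}^{\mathcal{X}\to\mathcal{Y}}(\alpha)$ and $F(\Phi_{M_n}^{\mathcal{X}\to\mathcal{X}}(\alpha))$. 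Taking $c_{P}$ to be the unique isomorphism of convolutions lifting $\operatorname{id}_{C^{\bullet}_{P}}$, the two composites around the square with edges $c_{P}$, $c_{P'}$, $\Phi_{K_n}^{\mathcal{X}\to\mathcal{Y}}(\alpha)$, $F(\Phi_{M_n}^{\mathcal{X}\to\mathcal{X}}(\alpha))$ both lift the same morphism $C^{\bullet}_{P}\to C^{\bullet}_{P'}$, hence agree by uniqueness. Thus $\{c_{P}\}_{P\in\mathbf{S}_0}$ is a natural isomorphism $\Phi_{K_n}^{\mathcal{X}\to\mathcal{Y}}|_{\mathbf{S}_0}\cong F\circ\Phi_{M_n}^{\mathcal{X}\to\mathcal{X}}|_{\mathbf{S}_0}$.

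Finally I would extract the statement by truncation. As $n>\Cohdim(\mathcal{X}\times_{R}\mathcal{X})$ we have $M_n\simeq\mathcal{O}_{\Delta_{\mathcal{X}}}\oplus\mathcal{F}_n[n]$, and Lemma~\ref{4.3} gives $K_n\simeq\overline{K}_n\oplus C_n$ with $\Phi_{\overline{K}_n}^{\mathcal{X}\to\mathcal{Y}}(P)$ concentrated in $[-m,m]$ and $\Phi_{C_n}^{\mathcal{X}\to\mathcal{Y}}(P)$ in $[-m-n,m-n]$ for every $P\in\mathbf{S}_0$; these produce functor decompositions $\Phi_{K_n}^{\mathcal{X}\to\mathcal{Y}}\cong\Phi_{\overline{K}_n}^{\mathcal{X}\to\mathcal{Y}}\oplus\Phi_{C_n}^{\mathcal{X}\to\mathcal{Y}}$ and $F\circ\Phi_{M_n}^{\mathcal{X}\to\mathcal{X}}\cong F\oplus(F\circ\Phi_{\mathcal{F}_n}^{\mathcal{X}\to\mathcal{X}})[n]$, the right-hand summand in each case being concentrated in $[-m-n,m-n]$ on $\mathbf{S}_0$. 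Since $n>2m$, the interval $[-m-n,m-n]$ lies strictly below $[-m,m]$, so applying the functorial truncation $\tau^{\ge-m}$ to the natural isomorphism $\{c_{P}\}$ kills the $\Phi_{C_n}^{\mathcal{X}\to\mathcal{Y}}$ and $(F\circ\Phi_{\mathcal{F}_n}^{\mathcal{X}\to\mathcal{X}})[n]$ summands and leaves a natural isomorphism $\Phi_{\overline{K}_n}^{\mathcal{X}\to\mathcal{Y}}|_{\mathbf{S}_0}\cong F|_{\mathbf{S}_0}$, as desired. The step I expect to be the main obstacle is the functoriality in the second paragraph: one has to phrase the constructions of Lemmas~\ref{4.2}--\ref{4.3} so that it is the Postnikov systems themselves --- and not merely their convolutions --- that vary functorially with $P\in\mathbf{S}_0$, and one needs the rigidity of Postnikov systems on hand, which is exactly what forces the passage to a locally free $\Gamma(\mathcal{X},\mathcal{V}\otimes\pi^{*}L^{\otimes l}\otimes\mathcal{E}_{k})$ at the very start.
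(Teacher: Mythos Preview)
Your proposal is correct and follows essentially the same route as the paper: both arguments pin down the isomorphism $\Phi_{\overline K_n}(\mathcal A)\cong F(\mathcal A)$ and its naturality via the uniqueness properties of Postnikov convolutions from \cite[Lemmas~3.2--3.3]{CS07}. The only notable difference is packaging: the paper applies \cite[Lemma~3.3]{CS07} directly to the decomposed convolutions $\Phi_{\overline K_n}(\mathcal A)\oplus\Phi_{C_n}(\mathcal A)[n]$ and $F(\mathcal A)\oplus F(\mathcal F_n)[n]$ and reads off $\phi(\mathcal A)$ on the first summand, whereas you first obtain the natural isomorphism at the level of $K_n$ and then truncate; your explicit step ensuring that $\Gamma(\mathcal X,\mathcal V\otimes\pi^*L^{\otimes l}\otimes\mathcal E_k)$ is locally free over $R$ (needed so that $F$ commutes with $(-)\otimes_R\Gamma$ and the negative-Ext vanishing factors as a tensor) is a point the paper's proof leaves implicit.
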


\begin{proof}
Let $\mathcal{A}$ be an object in $\mathbf{S}_{0}$. We show that the isomorphism obtained in Lemma \ref{4.3} is unique and natural. To this end, we have a right convolution$$(\delta_{0,\mathcal{A}}^{\prime},0)\colon\Gamma(\mathcal{X},\mathcal{A}\otimes\mathcal{E}_{0})\otimes_{R}F(\mathcal{G}_{0})\to \Phi_{\overline{K}_{n}}(\mathcal{A})\oplus\Phi_{C_{n}}(\mathcal{A})[n]$$for a fixed $n\gg 0$. On the other hand, we also have another right convolution for the same Postnikov system$$(F(\Phi_{\delta_{0,\mathcal{A}}}^{\mathcal{X}\to\mathcal{X}}),0)\colon\Gamma(\mathcal{X},\mathcal{A}\otimes\mathcal{E}_{0})\otimes_{R}F(\mathcal{G}_{0})\to F(\mathcal{A})\oplus F(\mathcal{F}_{n})[n].$$By \cite[Lemma\ 3.3]{CS07}, we get a unique isomorphism $\phi(\mathcal{A})\colon\Phi_{\overline{K}_{n}}(\mathcal{A})\cong F(\mathcal{A})$ such that $F(\Phi_{\delta_{0,\mathcal{A}}}^{\mathcal{X}\to\mathcal{X}})=\phi(\mathcal{A})\circ\delta_{0,\mathcal{A}}^{\prime}$ for all $\mathcal{A}\in\mathbf{S}_{0}$. To see this isomorphism is natural, let $h\colon\mathcal{A}\to\mathcal{B}$ be a morphism in $\mathbf{S}_{0}$. It is not hard to see that both $F(h)\circ\phi(\mathcal{A})$ and $\phi(\mathcal{B})\circ\Phi_{\overline{K}_{n}}(h)$ would make the following diagram commute
\begin{equation*}
\begin{tikzcd}[sep=large]
\Gamma(\mathcal{X},\mathcal{A}\otimes\mathcal{E}_{0})\otimes_{R}F(\mathcal{G}_{0})\arrow[r,"\delta_{0,\mathcal{A}}^{\prime}"]\arrow[d,"\Gamma(h\otimes id)\otimes id"] & \Phi_{\overline{K}_{n}}(\mathcal{A})\arrow[d]\\
\Gamma(\mathcal{X},\mathcal{B}\otimes\mathcal{E}_{0})\otimes_{R}F(\mathcal{G}_{0})\arrow[r,"F(\delta_{0,\mathcal{B}})"] & F(\mathcal{A}).
\end{tikzcd}
\end{equation*}
By \cite[Lemma\ 3.3]{CS07}, we must have $F(h)\circ\phi(\mathcal{A})=\phi(\mathcal{B})\circ\Phi_{\overline{K}_{n}}(h)$. Therefore we see that the ismorphisms $\phi(\mathcal{A})$'s extend to a natural isomorphism $\Phi_{\overline{K}_{n}}|_{\mathbf{S}_{0}}\to F|_{\mathbf{S}_{0}}$.
\end{proof}

Recall that we the sequence $S=\{\mathcal{V}\otimes_{\mathcal{O_{X}}}\pi^{*}L^{\otimes -l}\}_{l\in\mathbb{Z}}$ is ample by Lemma \ref{3.6}. Let $\mathbf{S}$ be the full subcategory of $D^{b}_{\operatorname{Coh}}(\mathcal{X})$ whose objects are elements in $S$. The next step is to show that $F\cong\Phi_{\overline{K}_{n}}$ when restricted to $\mathbf{S}$.

\begin{proposition}\label{4.5}
    In this situation of Lemma \ref{4.4}, there exists a bounded complex $K\in D_{\operatorname{Coh}}^{b}(\mathcal{X}\times_{R}\mathcal{Y})$ such that $F|_{\mathbf{S}}\cong\Phi_{\overline{K}_{n}}|_{\mathbf{S}}$.
\end{proposition}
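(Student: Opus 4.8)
The plan is to upgrade the natural isomorphism $\phi\colon \Phi_{\overline{K}_{n}}|_{\mathbf{S}_{0}}\to F|_{\mathbf{S}_{0}}$ from Lemma \ref{4.4}, which is only defined on the ``positive tail'' $\mathbf{S}_{0}=\{\mathcal{V}\otimes\pi^{*}L^{\otimes l}: l>N_{n}\}$, to an isomorphism on the full ample sequence $\mathbf{S}=\{\mathcal{V}\otimes\pi^{*}L^{\otimes -l}\}_{l\in\mathbb{Z}}$. The key observation is that $\mathbf{S}_{0}$ consists of objects $\mathcal{V}\otimes\pi^{*}L^{\otimes l}$ for $l$ \emph{sufficiently large and positive}, whereas the ample sequence $S$ of Proposition \ref{3.6} uses \emph{negative} powers; but the kernel $\overline{K}_{n}$ was constructed from a resolution of the diagonal, so $\Phi_{\overline{K}_{n}}$ is an honest exact functor on all of $D^{b}_{\operatorname{Coh}}(\mathcal{X})$, and the point is simply to extend the comparison to more objects. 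First I would note that for a fixed $\mathcal{V}\otimes\pi^{*}L^{\otimes -l}\in S$ and $m\gg 0$, the object $\mathcal{V}\otimes\pi^{*}L^{\otimes(m-l)}$ lies in $\mathbf{S}_{0}$, so by tensoring the resolution of the diagonal (Corollary \ref{2.3}) with $\pi^{*}L^{\otimes -m}$ in the first variable one gets that $\Phi_{\overline{K}_{n}\otimes q^{*}\pi^{*}L^{\otimes -m}}(\mathcal{A})\cong\Phi_{\overline{K}_{n}}(\mathcal{A}\otimes\pi^{*}L^{\otimes -m})$ and similarly twist $F$; this lets one transport the already-constructed isomorphism.

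The cleaner route, which I expect is what the author intends, is to use the standard trick for extending isomorphisms along ample sequences (as in \cite[Proposition\ 3.7]{CS07} and \cite[Proposition\ 2.16]{Huy06}): given any object $\mathcal{A}\in S$, resolve $\mathcal{A}$ by objects of $\mathbf{S}_{0}$. Concretely, by Lemma \ref{3.3} one can write down a (long) exact sequence $\mathcal{W}_{-k}\to\cdots\to\mathcal{W}_{0}\to\mathcal{A}\to 0$ with each $\mathcal{W}_{j}$ a finite direct sum of objects of $\mathbf{S}_{0}$; truncating at length equal to $\Cohdim(\mathcal{X}\times_{R}\mathcal{Y})+\Cohdim(\mathcal{Y})$ or so, both $F$ and $\Phi_{\overline{K}_{n}}$ being exact functors send this to a Postnikov system, and since $\phi$ is already a natural isomorphism on the $\mathcal{W}_{j}$'s (each in $\mathbf{S}_{0}$), one gets an induced isomorphism $\Phi_{\overline{K}_{n}}(\mathcal{A})\cong F(\mathcal{A})$. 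Naturality follows because any morphism $\mathcal{A}\to\mathcal{B}$ in $\mathbf{S}$ lifts (after passing to large enough resolutions) to a morphism of the chosen $\mathbf{S}_{0}$-resolutions, by the projectivity-type property that $\operatorname{Hom}(\mathcal{V}\otimes\pi^{*}L^{\otimes l},-)$ is exact on coherent sheaves for $l\gg0$ together with the Ext-vanishing hypothesis on $F$.

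I would organize it as follows. Step one: fix $n$ large as in Lemma \ref{4.4} and let $\overline{K}=\overline{K}_{n}$; set $\Phi=\Phi_{\overline{K}}$. Step two: for each $l\in\mathbb{Z}$ with $\mathcal{V}\otimes\pi^{*}L^{\otimes -l}\in S\setminus\mathbf{S}_{0}$, build a bounded resolution by sums of members of $\mathbf{S}_{0}$ using Lemma \ref{3.3} and Lemma \ref{3.4} to control the Ext's, chopping it at length $>\Cohdim(\mathcal{Y})+\Cohdim(\mathcal{X}\times_{R}\mathcal{Y})$ so that the convolution computing the cohomology of the image complex is insensitive to the tail. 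Step three: apply $F$ and $\Phi$ to this resolution; because $F$ and $\Phi$ are exact and $\phi$ is a natural isomorphism of functors on $\mathbf{S}_{0}$, invoke \cite[Lemma\ 3.3]{CS07} on the resulting Postnikov systems to obtain a canonical isomorphism $\phi(\mathcal{A})\colon\Phi(\mathcal{A})\xrightarrow{\sim}F(\mathcal{A})$, independent of the chosen resolution. Step four: verify naturality in $\mathbf{S}$ exactly as in the proof of Lemma \ref{4.4}, lifting a morphism in $\mathbf{S}$ to a morphism of resolutions and using uniqueness in \cite[Lemma\ 3.3]{CS07}. Then take $K=\overline{K}$.

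The main obstacle will be the bookkeeping in Step two and Step three: one must choose the length of the $\mathbf{S}_{0}$-resolution uniformly (depending only on $\Cohdim(\mathcal{X}\times_{R}\mathcal{Y})$, $\Cohdim(\mathcal{Y})$ and the boundedness constant $m$ of $F$), and then argue that both $F$ and $\Phi$ of the brutally-truncated complex split off the ``error term'' in a sufficiently high degree so that it does not interfere with the comparison in the window $[-m,m]$ — this is the same disjoint-intervals argument used in Lemma \ref{4.3}, and the Ext-vanishing hypothesis on $F$ is what makes the relevant Postnikov systems unique so that \cite[Lemma\ 3.3]{CS07} applies. Once the isomorphism is established on the ample sequence $\mathbf{S}$, the passage to all of $D^{b}_{\operatorname{Coh}}(\mathcal{X})$ — hence the proof of Theorem \ref{1.1} — will follow from the standard ample-sequence extension result of Canonaco--Stellari, but that is carried out in the next section.
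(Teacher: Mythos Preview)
Your overall strategy --- resolve each $\mathcal{A}\in\mathbf{S}$ by objects of $\mathbf{S}_{0}$, apply $F$ and $\Phi_{\overline{K}_{n}}$, and compare via convolution uniqueness --- is the right shape, but your Step two has the direction backwards, and this is not just a bookkeeping issue. Lemma \ref{3.3} produces surjections from sums of $\mathcal{V}\otimes\pi^{*}L^{\otimes -n}$ with $n\gg 0$, i.e.\ it gives \emph{left} resolutions by \emph{very negative} twists. But $\mathbf{S}_{0}$ consists of $\mathcal{V}\otimes\pi^{*}L^{\otimes l}$ with $l>N_{n}$, i.e.\ \emph{very positive} twists; there is no map $\mathcal{V}\otimes\pi^{*}L^{\otimes l}\twoheadrightarrow\mathcal{V}\otimes\pi^{*}L^{\otimes k}$ for $l>k$ coming from ampleness. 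So you cannot build the exact sequence $\mathcal{W}_{-k}\to\cdots\to\mathcal{W}_{0}\to\mathcal{A}\to 0$ with $\mathcal{W}_{j}\in\mathbf{S}_{0}$ that you describe. Your first paragraph's twisting idea has a related problem: $\Phi_{\overline{K}_{n}}$ certainly commutes with tensoring by $\pi^{*}L^{\otimes -m}$, but you have no control over $F(\mathcal{A}\otimes\pi^{*}L^{\otimes -m})$ versus $F(\mathcal{A})$.

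The paper fixes this by using a \emph{right} resolution instead. Choosing a projective embedding $X\hookrightarrow\mathbb{P}^{N}_{R}$ with $L=\mathcal{O}_{X}(1)$, the Beilinson resolution on $\mathbb{P}^{N}_{R}$ pulls back to a right resolution
\[
0\to\mathcal{O}_{\mathcal{X}}\to V_{N}\otimes\pi^{*}\mathcal{O}_{X}(1)\to\cdots\to V_{0}\otimes\pi^{*}\mathcal{O}_{X}(N+1)\to 0,
\]
so after tensoring with $\mathcal{V}\otimes\pi^{*}\mathcal{O}_{X}(k)$ one realizes $\mathcal{V}\otimes\pi^{*}\mathcal{O}_{X}(k)$ as a \emph{left} convolution of the objects $\mathcal{V}\otimes\pi^{*}\mathcal{O}_{X}(k+1),\ldots,\mathcal{V}\otimes\pi^{*}\mathcal{O}_{X}(N+k+1)$, each of which has strictly higher twist. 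One then argues by descending induction on $k$: the base case $k>N_{n}$ is Lemma \ref{4.4}, and the inductive step applies both functors to this finite Beilinson complex and invokes \cite[Lemmas 3.2 and 3.3]{CS07} exactly as you outlined in Steps three and four. The point is that the Beilinson resolution, unlike Lemma \ref{3.3}, moves twists in the direction you need.
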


\begin{proof}
We argue by induction on $l$ as in the proof of \cite[Lemma\ 6.4]{Kaw04}. By Lemma \ref{4.3}, there exists an integer $N_{\mathbf{S}}$ such that $\mathcal{V}\otimes_{\mathcal{O_{X}}}\pi^{*}L^{\otimes l}\in\mathbf{S}_{0}$ for all $l>N_{\mathbf{S}}$. We have shown in Lemma \ref{4.4} that there is an isomorphism$$\phi(l)\colon\Phi_{\overline{K}_{n}}(\mathcal{V}\otimes_{\mathcal{O_{X}}}\pi^{*}L^{\otimes l})\to F(\mathcal{V}\otimes_{\mathcal{O_{X}}}\pi^{*}L^{\otimes l})$$for all $l>N_{\mathbf{S}}$. This covers the base case. For the inductive step, suppose there is an integer $k$ such that for every $k^\prime\geq k$, we have an  isomorphism$$\phi(k^\prime)\colon \Phi_{\overline{K}_{n}}(\mathcal{V}\otimes_{\mathcal{O_{X}}}\pi^{*}L^{\otimes k^\prime})\to F(\mathcal{V}\otimes_{\mathcal{O_{X}}}\pi^{*}L^{\otimes k^\prime}).$$Since $X$ is projective, we may choose a closed immersion $X\to\mathbb{P}_{R}^{N}$ such that $L=\mathcal{O}_{X}(1)$. Using the Beilision resolution, we obtain the following right resolution of $\mathcal{O}_{\mathcal{X}}$
\begin{equation*}\label{eq4.7}
    \begin{tikzcd}
    0\arrow[r] & \mathcal{O}_{\mathcal{X}}\arrow[r] & V_{N}\otimes\pi^{*}\mathcal{O}_{X}(1)\arrow[r] & \cdots\arrow[r] & V_{0}\otimes\pi^{*}\mathcal{O}_{X}(N+1)\arrow[r] & 0,
    \end{tikzcd}
\end{equation*}
where $V_{i}=H^{N}(\mathbb{P}_{R}^{N},\Omega_{\mathbb{P}_{R}^{N}}^{i}(i-N-1))$ for all $0\leq i\leq N$. Tensoring with $\mathcal{V}\otimes\pi^{*}\mathcal{O}_{X}(k)$ yields a right resolution for $\mathcal{V}\otimes\pi^{*}\mathcal{O}_{X}(k)$ as follows
\begin{equation*}\label{eq4.8}
    \begin{tikzcd}[column sep=0.3cm]
    0\arrow[r] & \mathcal{V}\otimes\mathcal{O}_{X}(k)\arrow[r] & V_{N}\otimes\mathcal{V}\otimes\pi^{*}\mathcal{O}_{X}(k+1)\arrow[r] & V_{0}\otimes\mathcal{V}\otimes\pi^{*}\mathcal{O}_{X}(N+k+1)\arrow[r] & 0.
    \end{tikzcd}
\end{equation*}
In other words, $\mathcal{V}\otimes\pi^{*}\mathcal{O}_{X}(k)$ is a left convolution to
\begin{equation*}\label{eq4.9}
    \begin{tikzcd}
    V_{N}\otimes\mathcal{V}\otimes\pi^{*}\mathcal{O}_{X}(k+1)\arrow[r] & \cdots\arrow[r] & V_{0}\otimes\mathcal{V}\otimes\pi^{*}\mathcal{O}_{X}(N+k+1).
    \end{tikzcd}
\end{equation*}
Note that the objects in the complex above are all contained in $\mathbf{S}_{0}$ by assumption. By induction hypothesis, we get the following diagram
\begin{equation*}\label{eq4.10}
    \begin{tikzcd}[column sep=small]
    V_{N}\otimes\Phi_{\overline{K}_{n}}(\mathcal{V}\otimes\pi^{*}\mathcal{O}_{X}(k+1))\arrow[r]\arrow[d,"id\otimes\phi(k+1)"] & \cdots\arrow[r] & V_{0}\otimes\Phi_{\overline{K}_{n}}(\mathcal{V}\otimes\pi^{*}\mathcal{O}_{X}(N+k+1))\arrow[d,"id\otimes\phi(N+k+1)"]\\
    V_{N}\otimes F(\mathcal{V}\otimes\pi^{*}\mathcal{O}_{X}(k+1))\arrow[r] & \cdots\arrow[r] & V_{0}\otimes F(\mathcal{V}\otimes\pi^{*}\mathcal{O}_{X}(N+k+1)),
    \end{tikzcd}
\end{equation*}
where the downward maps are all isomorphisms. We see that $\Phi_{\overline{K}_{n}}(\mathcal{V}\otimes\pi^{*}\mathcal{O}_{X}(k))$ and $F(\mathcal{V}\otimes\pi^{*}\mathcal{O}_{X}(k))$ are left convolutions of complexes that are quasi-isomorphic. By \cite[Lemma\ 3.2]{CS07}, we obtain am ismorphism$$\phi(k)\colon\Phi_{\overline{K}_{n}}(\mathcal{V}\otimes\pi^{*}\mathcal{O}_{X}(k))\to F(\mathcal{V}\otimes\pi^{*}\mathcal{O}_{X}(k)).$$By induction, this is true for all $l\in\mathbb{Z}$. It remains to show that these isomorphisms are natural. To this end, let $h\colon\mathcal{V}\otimes\pi^{*}\mathcal{O}_{X}(k)\to\mathcal{V}\otimes\pi^{*}\mathcal{O}_{X}(k^\prime)$. Then we have the following commutative diagram
\begin{equation*}\label{eq4.11}
    \begin{tikzcd}[column sep=small]
    V_{N}\otimes\Phi_{\overline{K}_{n}}(\mathcal{V}\otimes\pi^{*}\mathcal{O}_{X}(k+1))\arrow[r]\arrow[d,"id\otimes(F(h)\circ\phi(k^\prime+1))"] & \cdots\arrow[r] & V_{0}\otimes\Phi_{\overline{K}_{n}}\mathcal{V}\otimes\pi^{*}\mathcal{O}_{X}(N+k+1))\arrow[d,"id\otimes(F(h)\circ\phi(N+k^\prime+1))"]\\
    V_{N}\otimes F(\mathcal{V}\otimes\pi^{*}\mathcal{O}_{X}(k^\prime+1))\arrow[r] & \cdots\arrow[r] & V_{0}\otimes F(\mathcal{V}\otimes\pi^{*}\mathcal{O}_{X}(N+k^\prime+1)).
    \end{tikzcd}
\end{equation*}
We have two maps from $\Phi_{\overline{K}_{n}}(\mathcal{V}\otimes\pi^{*}\mathcal{O}_{X}(k))$ to $F(\mathcal{V}\otimes\pi^{*}\mathcal{O}_{X}(k^\prime))$, namely $F(h)\circ\phi(k^\prime)$ and $\phi(k)\circ\Phi_{\overline{K}_{n}}(h)$.
By \cite[Lemma\ 3.3]{CS07}, such a map is unique and we have $F(h)\circ\phi(k^\prime)=\phi(k)\circ\Phi_{\overline{K}_{n}}(h)$ as desired.\qedhere
\end{proof}

Proposition \ref{4.5} tells us that the two functors $F$ and $\Phi_{\overline{K}_{n}}$ agree when restricted to an ample sequence in $\operatorname{Coh}(\mathcal{X})$. Therefore, \cite[Proposition\ 3.7]{CS07} applies, and we obtain the following result as an immediate consequence.

\begin{corollary}\label{4.6}
Let $\mathcal{X},\mathcal{Y}$ be smooth tame algebraic stacks of finite type over $R$. Let $\pi\colon\mathcal{X}\to X$ be the associated coarse moduli space. Let $F\colon D^{b}_{coh}(\mathcal{X})\to D^{b}_{coh}(\mathcal{Y})$ be a $k$-linear exact functor such that $\operatorname{Hom}_{\mathcal{O}_{\mathcal{Y}}}(F(\mathcal{A}),F(\mathcal{B})[i])=0$ for all $\mathcal{A},\mathcal{B}\in\operatorname{Coh}(\mathcal{X})$ and $i<0$. Suppose $\mathcal{X}$ admits a generating vector bundle $\mathcal{V}$ and $X$ is projective. If $F$ admits a left adjoint and is bound, then there exists a complex $K\in D_{\operatorname{Coh}}^{b}(\mathcal{X}\times_{R}\mathcal{Y})$ such that $F\cong\Phi_{\overline{K}_{n}}$ for every fixed integer $n>\max\{\Cohdim(\mathcal{X}\times_{R}\mathcal{Y})+2m,\  \Cohdim(\mathcal{X}\times_{R}\mathcal{X})\}$.
\end{corollary}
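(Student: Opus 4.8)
The plan is to promote the isomorphism of functors restricted to an ample sequence, supplied by Proposition \ref{4.5}, to a global isomorphism $F\cong\Phi_{\overline{K}_{n}}$ by invoking the uniqueness principle for exact functors that agree on an ample sequence. Concretely, I would fix an integer $n>\max\{\Cohdim(\mathcal{X}\times_{R}\mathcal{Y})+2m,\ \Cohdim(\mathcal{X}\times_{R}\mathcal{X})\}$, take $\overline{K}_{n}\in D^{b}_{\operatorname{Coh}}(\mathcal{X}\times_{R}\mathcal{Y})$ to be the bounded complex produced in Lemmas \ref{4.3} and \ref{4.4}, and argue that this $\overline{K}_{n}$ is a kernel for $F$.

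I would then assemble the hypotheses of \cite[Proposition\ 3.7]{CS07}. First, by Proposition \ref{3.6} the sequence $S=\{\mathcal{V}\otimes_{\mathcal{O}_{\mathcal{X}}}\pi^{*}L^{\otimes -l}\}_{l\in\mathbb{Z}}$ is ample in $\operatorname{Coh}(\mathcal{X})$. Second, by Proposition \ref{4.5} the functors $F$ and $\Phi_{\overline{K}_{n}}$ are naturally isomorphic on the full subcategory $\mathbf{S}\subset D^{b}_{\operatorname{Coh}}(\mathcal{X})$ spanned by $S$. Third, $F$ is bounded and admits a left adjoint by hypothesis, while $\Phi_{\overline{K}_{n}}$ is a Fourier--Mukai functor with image in $D^{b}_{\operatorname{Coh}}(\mathcal{Y})$: the pullback $Lq^{*}$ is exact and preserves bounded coherent complexes since $q$ is flat, derived tensoring against $\overline{K}_{n}$ preserves boundedness because $\overline{K}_{n}$ is a summand of a finite convolution of complexes $q^{*}\mathcal{E}_{i}\otimes p^{*}F(\mathcal{G}_{i})$ with the $\mathcal{E}_{i}$ finite locally free, and $Rp_{*}$ preserves $D^{b}_{\operatorname{Coh}}$ because $p$ is proper and $\mathcal{X}\times_{R}\mathcal{Y}$ has finite cohomological dimension by \cite[Theorems\ B and C]{HR15}. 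These are exactly the inputs of \cite[Proposition\ 3.7]{CS07}, whose proof is essentially categorical: every object of $D^{b}_{\operatorname{Coh}}(\mathcal{X})$ admits a left resolution by objects of $S$, the boundedness of $F$ truncates this resolution to finite length, and the left adjoint transports the isomorphism along it. Applying it gives $F\cong\Phi_{\overline{K}_{n}}$; since $n$ was an arbitrary admissible integer, this holds for every such $n$.

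The step that carries the real weight is not this final extension — which is the categorical black box \cite[Proposition\ 3.7]{CS07} — but the verification of its hypotheses over a general quasiexcellent noetherian base rather than over a field: namely that $S$ is a genuine ample sequence, which is the content of Proposition \ref{3.6} and where smoothness, tameness, properness, cohomology and base change, and the Enriques--Severi--Zariski lemma all enter; and, to a lesser extent, that $\Phi_{\overline{K}_{n}}$ really lands in $D^{b}_{\operatorname{Coh}}(\mathcal{Y})$ and (if needed for the cited proposition) admits a left adjoint, which one would deduce from the finiteness of the relevant cohomological dimensions together with relative Grothendieck duality for the proper projections between smooth tame stacks whose coarse spaces are projective over $R$.
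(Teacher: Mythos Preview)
Your proposal is correct and matches the paper's argument essentially verbatim: the paper simply states that Proposition~\ref{4.5} gives $F|_{\mathbf{S}}\cong\Phi_{\overline{K}_{n}}|_{\mathbf{S}}$ on the ample sequence of Proposition~\ref{3.6}, and then invokes \cite[Proposition~3.7]{CS07} directly. Your additional care in verifying that $\Phi_{\overline{K}_{n}}$ lands in $D^{b}_{\operatorname{Coh}}(\mathcal{Y})$ and has the requisite adjoints is more detail than the paper provides, but the approach is identical.
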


\section{Proof of Theorem \ref{1.1}}\label{5}

We first show that any exact functor going out of $D_{coh}^{b}(\mathcal{X})$ is bounded.

\begin{lemma}\label{5.1}
Let $\mathcal{X}$ and $\mathcal
Y$ be proper tame algebraic stacks over a noetherian ring $R$. Let $F\colon D_{\operatorname{Coh}}^{b}(\mathcal{X})\to D_{\operatorname{Coh}}^{b}(\mathcal{Y})$ be an $R$-linear exact functor. If $R$ is quasiexcellent and of finite Krull dimension, then $F$ is bounded and admits a left adjoint. \qedhere
\end{lemma}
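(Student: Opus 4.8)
The plan is to reduce both assertions to the strong generation of the bounded derived categories of coherent sheaves. Since $R$ is quasiexcellent of finite Krull dimension and $\mathcal{X},\mathcal{Y}$ are tame and of finite type over $R$ (being proper), \cite[Theorem~B.1]{HP24} supplies objects $G\in D^{b}_{\operatorname{Coh}}(\mathcal{X})$ and $G'\in D^{b}_{\operatorname{Coh}}(\mathcal{Y})$ together with an integer $n\geq 0$ so that $\langle G\rangle_{n}=D^{b}_{\operatorname{Coh}}(\mathcal{X})$ and $\langle G'\rangle_{n}=D^{b}_{\operatorname{Coh}}(\mathcal{Y})$, where $\langle\,\cdot\,\rangle_{n}$ denotes the full subcategory of objects built from the generator in at most $n$ iterated cones, allowing shifts, finite direct sums and direct summands. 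Replacing $G$ and $G'$ by the direct sums of their cohomology sheaves, at the cost of enlarging $n$ by their amplitudes, I may assume both generators are coherent sheaves. I will also use that $\mathcal{X}$ and $\mathcal{Y}$, being proper over the noetherian ring $R$, have all $\operatorname{Hom}$-modules in $D^{b}_{\operatorname{Coh}}$ finitely generated over $R$, and that, being tame, they have finite cohomological dimension \cite{HR15}.

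For \emph{boundedness}: any exact functor $F$ carries $\langle G\rangle_{n}$ into $\langle F(G)\rangle_{n}$, so $F(\mathcal{A})\in\langle F(G)\rangle_{n}$ for every coherent sheaf $\mathcal{A}$ on $\mathcal{X}$. The key point is that, because the standard $t$-structure on $D^{b}_{\operatorname{Coh}}(\mathcal{X})$ is bounded with heart $\operatorname{Coh}(\mathcal{X})$ and $G$ is a sheaf, a coherent sheaf $\mathcal{A}$ already lies in $\langle G\rangle_{n}$ by a build using only the shifts $G[j]$ with $|j|\leq c$, for a constant $c$ depending on $n$ and on the cohomological dimension of $\mathcal{X}$ but not on $\mathcal{A}$; one proves this by induction on $n$ — the case $n=1$ being immediate, since a coherent direct summand of a finite sum of shifts of a sheaf is a summand of its degree-zero part, and the inductive step carried out by applying the truncation functors of the $t$-structure to discard the contributions of the shifts $G[j]$ with $|j|$ large. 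Feeding this through $F$, and using that $F(G)$ is one fixed object of $D^{b}_{\operatorname{Coh}}(\mathcal{Y})$, of amplitude at most $d$ say, shows that $F(\mathcal{A})$ is concentrated in $[-m,m]$ with $m=c+d+n$, uniformly in $\mathcal{A}$; this is the definition of $F$ being bounded. I expect this controlled-shift statement to be the main obstacle: strong generation alone only bounds the number of cones needed to build $\mathcal{A}$, and the content is that a bounded $t$-structure lets one bound the shifts as well whenever the object being built lies in the heart.

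For the \emph{left adjoint}: the opposite category $D^{b}_{\operatorname{Coh}}(\mathcal{X})^{\mathrm{op}}$ is again triangulated, is idempotent complete, is $\operatorname{Hom}$-finite over $R$, and is still strongly generated because the operations defining $\langle\,\cdot\,\rangle_{n}$ are self-dual; in the terminology of \cite{BvdB03} it is therefore saturated. For each $\mathcal{B}\in D^{b}_{\operatorname{Coh}}(\mathcal{Y})$, the functor $\mathcal{A}\mapsto\operatorname{Hom}_{\mathcal{Y}}(\mathcal{B},F(\mathcal{A}))$ on $D^{b}_{\operatorname{Coh}}(\mathcal{X})$ is cohomological and of finite type — the latter because $\mathcal{Y}$ is proper over $R$ — so the Bondal--Van den Bergh representability theorem, applied to $D^{b}_{\operatorname{Coh}}(\mathcal{X})^{\mathrm{op}}$, produces an object $F^{L}(\mathcal{B})\in D^{b}_{\operatorname{Coh}}(\mathcal{X})$ corepresenting it; by the usual Yoneda argument these assemble into a functor $F^{L}$ left adjoint to $F$. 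When $\mathcal{X}$ and $\mathcal{Y}$ are moreover smooth, as in Theorem~\ref{1.1}, one may instead argue directly: $D^{b}_{\operatorname{Coh}}=D^{\operatorname{perf}}$ is the subcategory of compact objects of the compactly generated category $D_{\operatorname{Qcoh}}$, so one extends $F$ to a coproduct-preserving functor on $D_{\operatorname{Qcoh}}$, takes a right adjoint by Brown representability, and converts it to a left adjoint of $F$ by conjugating with the involutive duality $R\sheafhom(-,\mathcal{O}_{\mathcal{X}})$ on perfect complexes, in the spirit of \cite{Ola24}. Either route, combined with the boundedness already established, yields the lemma.
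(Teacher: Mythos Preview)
Your high-level strategy coincides with the paper's: both take strong generation of $D^b_{\operatorname{Coh}}(\mathcal{X})$ from \cite[Theorem~B.1]{HP24} as the engine, and both extract the left adjoint from Bondal--Van den Bergh representability (the paper cites this in packaged form as \cite[Corollary~B.3]{HP24}, which is exactly your saturation argument). The paper then deduces boundedness by the chain strong generation $\Rightarrow$ saturation \cite[Theorem~1.3]{BvdB03} $\Rightarrow$ boundedness \cite[\href{https://stacks.math.columbia.edu/tag/0FX8}{Tag~0FX8}]{stacks-project}, whereas you attempt to prove the controlled-shift statement directly.

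That direct argument has a gap in the inductive step, and you have correctly flagged it as the obstacle. If a sheaf $\mathcal{A}$ is a summand of $C$ sitting in a triangle $X\to C\to Y$ with $X\in\langle G\rangle_{n-1}$ and $Y\in\langle G\rangle_{1}$, then applying truncation functors to $X$, $Y$, or $C$ destroys the triangle and need not leave $\mathcal{A}$ as a summand of anything in $\langle G\rangle_{n}$ built from bounded shifts; there is no mechanism by which the large-shift summands of $Y$ can simply be ``discarded'' while preserving the cone relation. The conclusion you are after is true, but the standard proof does not go through truncation: one uses the ghost-map characterisation of strong generation, constructing an approximation $P_0\to\mathcal{A}$ with $P_0\in\operatorname{add}\bigl(\bigoplus_{-d\le j\le 0}G[j]\bigr)$ (here $d=\Cohdim(\mathcal{X})$) whose cone map is $G$-ghost, and iterating; after $n$ steps the composite ghost vanishes and $\mathcal{A}$ is built from the $P_i$, whose shifts grow only linearly in $i$. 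Once this step is repaired you have essentially reproduced what the paper's citations encode. Your alternative route to the left adjoint in the smooth case via $D_{\operatorname{Qcoh}}$ and duality on perfect complexes is correct but redundant once representability on $D^b_{\operatorname{Coh}}$ is available.
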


\begin{proof}
By \cite[Theorem\ B.1]{HP24}, $D_{\operatorname{Coh}}^{b}(\mathcal{X})$ is strongly generated. By \cite[Theorem\ 1.3]{BvdB03}, $D_{\operatorname{Coh}}^{b}(\mathcal{X})$ is saturated. It follows from \cite[\href{https://stacks.math.columbia.edu/tag/0FX8}{Tag 0FX8}]{stacks-project} that $F$ is bounded. By \cite[Corollary\ B.3]{HP24}, $F$ admits a left adjoint.
\end{proof}

Note that Lemma \ref{5.1} is the only statement in this article for which quasiexcellence is necessary as remarked in the introduction. We are now in the position to prove our main theorem.

\begin{proof}[Proof of Theorem \ref{1.1}]
By assumption, $\mathcal{X}$ has the resolution property. Therefore, it is a quotient stack by \cite[Theorem\ 1.1]{tot04} and thus admits a generating vector bundle $\mathcal{V}$. The existence part of the statement then follows from Corollary \ref{4.6} and Lemma \ref{5.1}.

For uniqueness, we follow the standard argument in \cite[Section\ 4.3]{CS07}. Suppose $F\cong\Phi_{K}$ for some bounded complex $K\in D_{\operatorname{Coh}}^{b}(\mathcal{X}\times_{R}\mathcal{Y})$. Applying the Fourier--Mukai transform $\Phi_{K\boxtimes\mathcal{O}_{\Delta_{\mathcal{X}}}}\colon D_{\operatorname{Coh}}^{b}(\mathcal{X}\times_{R} \mathcal{X})\to D_{\operatorname{Coh}}^{b}(\mathcal{X}\times_{R} \mathcal{Y})$ to the Postnikov system in \ref{eq4.1} gives us $\Phi_{K\boxtimes\mathcal{O}_{\Delta}}(M_{n})\simeq\Phi_{K\boxtimes\mathcal{O}_{\Delta_{\mathcal{X}}}}(\mathcal{O}_{\Delta_{\mathcal{X}}})\oplus\Phi_{K\boxtimes\mathcal{O}_{\Delta_{\mathcal{X}}}}(\mathcal{F}_{n})[n]$ for all $n>0$. It is not hard to see that $K\simeq\Phi_{K\boxtimes\mathcal{O}_{\Delta_{\mathcal{X}}}}(\mathcal{O}_{\Delta_{\mathcal{X}}})$. So we have $\Phi_{K\boxtimes\mathcal{O}_{\Delta}}(M_{n})\simeq\Phi_{K\boxtimes\mathcal{O}_{\Delta_{\mathcal{X}}}}(\mathcal{O}_{\Delta_{\mathcal{X}}})\oplus\Phi_{K\boxtimes\mathcal{O}_{\Delta_{\mathcal{X}}}}(\mathcal{F}_{n})[n]$ for all $n>0$. For a fixed $n\gg 0$, we can find an integer $j$ such that $K\simeq\tau^{\geq j}\Phi_{K\boxtimes\mathcal{O}_{\Delta}}(M_{n})$. We may do so because any Fourier--Mukai transform is bounded. Again by computation, we have $\Phi_{K\boxtimes\mathcal{O}_{\Delta_{\mathcal{X}}}}(\mathcal{E}_{i}\boxtimes\mathcal{F}_{i})\simeq\mathcal{E}_{i}\boxtimes F(\mathcal{F}_{i})$ for all $i$. This tells us that $\Phi_{K\boxtimes\mathcal{O}_{\Delta}}(M_{n})$ is the $n^{th}$ convolution of the complex in \ref{eq4.3}. By uniqueness of convolution \cite[Lemma\ 3.2]{CS07}, we have $K\simeq\tau^{\geq j}\Phi_{K\boxtimes\mathcal{O}_{\Delta}}(M_{n})\simeq\tau^{\geq j}K_{n}$ for some fixed numbers $n\gg 0$ and $j$. This proves that $K$ is unique up to quasi-isomorphism.
\end{proof}

In \cite{Cal18}, Calabrese extended the Bondal--Orlov reconstruction theorem to the relative setting under the assumption that the derived equivalence involved is a Fourier--Mukai transform. Theorem \ref{1.1} shows that this assumption is superfluous over a Noetherian ring without appealing to dg-enhancements. We record the following relative Bondal--Orlov theorem as a consequence of the proof of Theorem \ref{1.1}, which we expect to be well-known among experts.

\begin{corollary}\label{5.3}
Let $R$ be a Noetherian ring. Let $X$ and $Y$ be two smooth projective varieties over $R$. If $X$ has ample or antiample canonical bundle over $R$, then $X\cong Y$ as schemes over $R$ if and only if $D_{\operatorname{Coh}}^{b}(X)\cong D_{\operatorname{Coh}}^{b}(Y)$ as $R$-linear triangulated categories. Moreover, if $R$ is connected, then the group of $R$-linear autoequivalences of $D_{\operatorname{Coh}}^{b}(X)$ is given by$$\mathbb{Z}\times(\operatorname{Aut}_{R}(X)\ltimes\operatorname{Pic}(X)).$$
\end{corollary}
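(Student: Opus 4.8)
The plan is to derive both statements from Theorem~\ref{1.1}, in the form of Corollary~\ref{4.6} (which needs $R$ only noetherian), together with the relative reconstruction results of Calabrese \cite{Cal18}. The ``if'' half of the biconditional is immediate. For the converse, suppose we are given an $R$-linear exact equivalence $F\colon D_{\operatorname{Coh}}^{b}(X)\to D_{\operatorname{Coh}}^{b}(Y)$. A smooth projective $R$-scheme is a smooth proper tame algebraic stack that is its own coarse moduli space, has the resolution property (it carries an ample line bundle), and admits the generating vector bundle $\mathcal{O}_{X}$; the same holds for $Y$. Since $F$ is an equivalence it admits a left adjoint, namely a quasi-inverse, and it is bounded; moreover, for coherent sheaves $\mathcal{A},\mathcal{B}$ on $X$ we have $\operatorname{Hom}_{\mathcal{O}_{Y}}(F(\mathcal{A}),F(\mathcal{B})[i])\cong\operatorname{Ext}_{\mathcal{O}_{X}}^{i}(\mathcal{A},\mathcal{B})=0$ for $i<0$. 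So Corollary~\ref{4.6} applies and $F\cong\Phi_{K}$ for a bounded kernel $K\in D_{\operatorname{Coh}}^{b}(X\times_{R}Y)$.

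Next I would invoke the untwisted case of Calabrese's relative Bondal--Orlov theorem \cite{Cal18}: if $X$ and $Y$ are smooth projective over $R$ and $\omega_{X/R}$ is ample or antiample, then any Fourier--Mukai equivalence $D_{\operatorname{Coh}}^{b}(X)\cong D_{\operatorname{Coh}}^{b}(Y)$ forces an isomorphism $X\cong Y$ over $R$. By the previous paragraph the Fourier--Mukai hypothesis is automatic for every $R$-linear equivalence, so we conclude $X\cong Y$ as $R$-schemes, completing the biconditional.

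For the group $\operatorname{Aut}_{R}(D_{\operatorname{Coh}}^{b}(X))$ of $R$-linear exact autoequivalences, I would first write down the natural homomorphism
\begin{align*}
\Theta\colon\mathbb{Z}\times\big(\operatorname{Aut}_{R}(X)\ltimes\operatorname{Pic}(X)\big)&\longrightarrow\operatorname{Aut}_{R}\!\big(D_{\operatorname{Coh}}^{b}(X)\big),\\
(m,f,L)&\longmapsto(-\otimes L)\circ Lf^{*}\circ[m].
\end{align*}
Injectivity is routine: evaluating at $\mathcal{O}_{X}$ pins down $m$ and $L$, and evaluating at the skyscrapers $k(x)$ at closed points $x\in X$ pins down $f$. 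For surjectivity, take an autoequivalence $\Phi$; by the first paragraph it is of Fourier--Mukai type. Since $X$, being a variety over $R$, is connected, it is smooth and proper over $R$ of constant relative dimension $d$, so its relative Serre functor is $-\otimes^{\mathbb{L}}\omega_{X/R}[d]$; running the Bondal--Orlov reconstruction argument in the relative form of \cite{Cal18} shows that $\Phi$ permutes the point objects --- all of which are shifts $k(x)[j]$ of skyscrapers at closed points --- and the invertible objects --- all of which are shifts of line bundles. Reading off the induced bijection on closed points and the induced twist yields $m\in\mathbb{Z}$, $f\in\operatorname{Aut}_{R}(X)$ and $L\in\operatorname{Pic}(X)$ with $\Phi\cong(-\otimes L)\circ Lf^{*}\circ[m]$, so $\Theta$ is an isomorphism.

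The genuinely new ingredient is only Theorem~\ref{1.1}, which turns every $R$-linear equivalence into a Fourier--Mukai transform; the rest is the classical Bondal--Orlov machinery. The step to be careful about --- and the reason I would cite \cite{Cal18} rather than reprove it --- is that the point-object and invertible-object characterizations, and the reconstruction of $X$ and of its $R$-automorphisms from them, must be run over the base $R$ rather than over a field: the relative Serre functor $-\otimes^{\mathbb{L}}\omega_{X/R}[d]$, which is a shifted line bundle because $X/R$ is smooth proper of constant relative dimension, plays the role of the absolute one, and closed points of $X$ take over the role of closed points of a variety over a field. One also uses along the way the standard facts that an exact equivalence of these derived categories admits a left adjoint and is bounded.
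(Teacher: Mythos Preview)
Your proposal is correct and follows essentially the same route as the paper: reduce to Corollary~\ref{4.6} to get a Fourier--Mukai kernel for the given equivalence, then invoke Calabrese's relative Bondal--Orlov results \cite{Cal18} for both the reconstruction statement and the description of the autoequivalence group. The only minor differences are that the paper cites \cite[Proposition\ 2.4]{CS07} explicitly for the boundedness of an equivalence (where you call it a standard fact), and that you sketch the point-object/invertible-object mechanism behind the autoequivalence computation whereas the paper simply appeals to \cite[Theorem\ A\ and\ C]{Cal18} without elaboration.
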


\begin{proof}
    By \cite[Proposition\ 2.4]{CS07}, any equivalence $F\colon D_{\operatorname{Coh}}^{b}(X)\to D_{\operatorname{Coh}}^{b}(Y)$ is bounded. Corollary \ref{4.6} shows that $F$ is a Fourier--Mukai transform. The statement then follows from \cite[Theorem\ A\ and\ C]{Cal18}.
\end{proof}

\appendix
\section{tame stacks and the resolution property}

In this appendix, we provide an amplification of \cite[Corollary\ 4.5]{DHM22} and use it to show that any smooth separated generically tame Deligne--Mumford stack with quasiprojective coarse moduli space over a noetherian base has the resolution property. 

\begin{lemma}[cf.\ {\cite[Corollary\ 4.5]{DHM22}}]\label{A.1}
    Let $S$ be a noetherian scheme. Let $\mathcal{X}$ be a quasicompact algebraic stack over $S$ with finite diagonal and $X$ the associated coarse moduli space. Let $\mathcal{Y}$ be an \'etale gerbe over $\mathcal{X}$. If $\mathcal{X}$ is a quotient stack and $X$ admits an ample line bundle, then there exists a scheme $Z$ over $S$ that admits an ample line bundle such that $\mathcal{Y}$ admits a finite flat covering $\mathcal{W}\to\mathcal{Y}$, where $\mathcal{W}$ is an \'etale gerbe over $Z$. Moreover, if $\mathcal{W}$ has the resolution property, so does $\mathcal{Y}$.
\end{lemma}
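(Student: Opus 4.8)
The plan is to mimic the strategy of \cite[Corollary 4.5]{DHM22} but tracking the gerbe structure throughout. First I would recall that since $\mathcal{X}$ is a quotient stack with finite diagonal whose coarse space $X$ carries an ample line bundle, \cite[Corollary 4.5]{DHM22} already produces a finite flat covering $c\colon U\to\mathcal{X}$ where $U$ is a scheme that is projective over $X$ (hence admits an ample line bundle, as in the proof of Lemma \ref{4.1}). The natural candidate for $\mathcal{W}$ is then the fiber product $\mathcal{W}=\mathcal{Y}\times_{\mathcal{X}}U$; the projection $\mathcal{W}\to\mathcal{Y}$ is finite and flat by base change, so the main point is to identify $\mathcal{W}$ as an \'etale gerbe over a scheme $Z$ admitting an ample line bundle. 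Since $\mathcal{Y}\to\mathcal{X}$ is an \'etale gerbe, its base change $\mathcal{W}\to U$ along $c$ is again an \'etale gerbe over the scheme $U$; so one is tempted to simply take $Z=U$. The subtlety is that ``gerbe over a scheme'' in the sense needed here should mean a gerbe over the \'etale site of that scheme, i.e. the coarse space of $\mathcal{W}$ should be $Z$ itself, and one must check $\mathcal{W}$ has finite (or at least nice) inertia so that its coarse space exists and equals $U$. Because the band of an \'etale gerbe over a scheme is a finite \'etale group scheme (the inertia is finite \'etale), $\mathcal{W}$ has finite inertia, its coarse space $Z$ exists by the Keel--Mori theorem, and the map $Z\to U$ is finite; composing with $U\to X$ and using that $X$ has an ample line bundle together with \cite[\href{https://stacks.math.columbia.edu/tag/0892}{Tag 0892}]{stacks-project} (an ample line bundle pulls back to an ample line bundle along a finite morphism, hence along $Z\to U\to X$, since $U\to X$ is projective) gives an ample line bundle on $Z$. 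Here I expect the main obstacle: verifying that $Z\to U$ is an isomorphism, or more precisely that $\mathcal{W}$ really is an \'etale gerbe over its \emph{own} coarse space $Z$ rather than merely over $U$; this should follow because $U$ is already a scheme and the rigidification $\mathcal{W}\to Z$ of an \'etale gerbe over a scheme is an isomorphism on coarse spaces, but one needs $U=Z$, which holds precisely because the coarse space of a gerbe over a scheme $T$ is $T$.

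For the final sentence, suppose $\mathcal{W}$ has the resolution property. I would argue that the resolution property descends along the finite flat covering $f\colon\mathcal{W}\to\mathcal{Y}$. Given a coherent sheaf $\mathcal{F}$ on $\mathcal{Y}$, pull it back to $\mathcal{W}$, choose a surjection $\mathcal{E}\twoheadrightarrow f^{*}\mathcal{F}$ from a vector bundle $\mathcal{E}$ on $\mathcal{W}$, and push forward: since $f$ is finite flat, $f_{*}\mathcal{E}$ is a vector bundle on $\mathcal{Y}$, and the adjunction counit composed with $f_{*}$ of the surjection gives a map $f_{*}\mathcal{E}\to f_{*}f^{*}\mathcal{F}$; precomposing the counit $\mathcal{F}\to f_{*}f^{*}\mathcal{F}$ — no, rather one uses that $f_{*}f^{*}\mathcal{F}\to\mathcal{F}$... here I should be careful about variance. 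The cleanest route: $f$ finite flat means $\mathcal{O}_{\mathcal{Y}}$ is a direct summand of $f_{*}\mathcal{O}_{\mathcal{W}}$ after multiplying by a suitable trace/norm when $f$ has invertible degree, but in general one uses that $f$ is \emph{faithfully} flat affine so that $\mathcal{F}\to f_{*}f^{*}\mathcal{F}$ is a (split, locally) injection and $f_{*}$ of a surjection onto $f^{*}\mathcal{F}$ together with the projection formula realizes $\mathcal{F}$ as a quotient of $f_{*}\mathcal{E}$ — this is exactly the standard argument that the resolution property descends along finite locally free surjections, and I would simply cite it or spell out the two-line version. Since $f$ is surjective and affine, $f_{*}$ is faithful exact, $f_{*}\mathcal{E}$ is locally free of finite rank, and the surjection $\mathcal{E}\twoheadrightarrow f^*\mathcal{F}$ pushes to a surjection $f_{*}\mathcal{E}\twoheadrightarrow f_{*}f^{*}\mathcal{F}$ which surjects onto $\mathcal{F}$ via the trace map (valid since $f$ is finite flat of positive rank everywhere); composing yields the desired surjection from a vector bundle, so $\mathcal{Y}$ has the resolution property. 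This last part is routine and I do not anticipate difficulty with it; the real content of the lemma is the geometric construction of $Z$ and $\mathcal{W}$ in the first paragraph.
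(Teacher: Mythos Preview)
Your approach is essentially the same as the paper's: apply \cite[Corollary~4.5]{DHM22} to obtain a finite flat cover $Z\to\mathcal{X}$ by a scheme with an ample line bundle, set $\mathcal{W}=\mathcal{Y}\times_{\mathcal{X}}Z$, and observe that the base change of an \'etale gerbe is an \'etale gerbe while the base change of a finite flat map is finite flat. Your detour through Keel--Mori and coarse spaces is unnecessary, however: the lemma only asks that $\mathcal{W}$ be an \'etale gerbe over \emph{some} scheme $Z$ with an ample line bundle, not that $Z$ be the coarse moduli space of $\mathcal{W}$, so you may simply take $Z=U$ and stop. For the last sentence, the paper just cites \cite[Proposition~5.3(\romannum{7})]{gro17} rather than spelling out the descent; your trace-map argument needs the degree of $f$ to be invertible, so if you want to write it out you should instead dualize (the injection $\mathcal{O}_{\mathcal{Y}}\hookrightarrow f_{*}\mathcal{O}_{\mathcal{W}}$ gives a surjection $(f_{*}\mathcal{O}_{\mathcal{W}})^{\vee}\twoheadrightarrow\mathcal{O}_{\mathcal{Y}}$) or simply cite Gross.
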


\begin{proof}
It follows from \cite[Corollary\ 4.5]{DHM22} that $\mathcal{X}$ admits a finite flat covering $Z\to X$, where $Z$ is a scheme admitting an ample line bundle. Then the base change $\mathcal{Y}_{Z}$ is an \'etale gerbe over $Z$ and the projection $\mathcal{Y}_{Z}\to\mathcal{Y}$ is a finite flat covering as desired. The latter statement follows from the former as the resolution property descends along finite flat coverings by \cite[Proposition\ 5.3(\romannum{7})]{gro17}.
\end{proof}

We say an algebraic stack is generically tame if it contains a dense open substack that is tame. 

\begin{corollary}[cf.\ {\cite[Theorem\ 2.2]{KV04}}]\label{A.2}
    Let $S$ be a noetherian scheme. Let $\mathcal{X}$ be a smooth separated Deligne--Mumford stack with quasiprojective coarse moduli space over $S$. If $\mathcal{X}$ is generically tame, then it has the resolution property and thus a quotient stack.
\end{corollary}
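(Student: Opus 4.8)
The plan is to reduce Corollary \ref{A.2} to Lemma \ref{A.1} by exhibiting $\mathcal{X}$ as an étale gerbe over a quotient stack whose coarse space carries an ample line bundle, and then checking that the covering produced by Lemma \ref{A.1} has the resolution property.

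First I would invoke the structure theory of Deligne--Mumford stacks in the style of \cite{KV04} and \cite{Kre09}: since $\mathcal{X}$ is a smooth separated Deligne--Mumford stack, it is generically a gerbe over its rigidification, and more precisely one can pass to the rigidification $\mathcal{X}\to\mathcal{X}^{\mathrm{rig}}$ obtained by killing the generic automorphism group. The stack $\mathcal{X}$ is then an étale gerbe over $\mathcal{X}^{\mathrm{rig}}$ on the tame (dense open) locus; the hypothesis that $\mathcal{X}$ is generically tame is exactly what makes this gerbe étale there, and smoothness plus separatedness propagate. The key input is that $\mathcal{X}^{\mathrm{rig}}$ has trivial generic stabilizer, so by a theorem of Kresch--Vistoli / Edidin--Hassett--Kresch--Vistoli it is a quotient stack, and its coarse moduli space coincides with $X$, which is quasiprojective by hypothesis, hence admits an ample line bundle after possibly shrinking (quasiprojective over noetherian $S$ means there is an $S$-ample line bundle). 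At this point the hypotheses of Lemma \ref{A.1} are met with $\mathcal{X}^{\mathrm{rig}}$ playing the role of the ``$\mathcal{X}$'' there and $\mathcal{X}$ playing the role of ``$\mathcal{Y}$''.

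Applying Lemma \ref{A.1} then yields a scheme $Z$ with an ample line bundle and a finite flat covering $\mathcal{W}\to\mathcal{X}$, where $\mathcal{W}$ is an étale gerbe over $Z$. The remaining step is to verify that $\mathcal{W}$ has the resolution property, since by the last sentence of Lemma \ref{A.1} this descends to $\mathcal{X}$. An étale gerbe over a scheme $Z$ admitting an ample line bundle has the resolution property: this is essentially Gabber's theorem / the surjectivity of the Brauer map in this setting (see \cite{gabber}, \cite{KV04}), or alternatively one can twist by an Azumaya algebra as in \cite{Mat21}; since $Z$ is a scheme with an ample line bundle, $\mathrm{Br}(Z)=\mathrm{Br}'(Z)$ and every element is represented by an Azumaya algebra, so the gerbe $\mathcal{W}$ admits a locally free twisted sheaf of positive rank, which gives the resolution property. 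Finally, since $\mathcal{X}$ is a quotient stack with affine stabilizers and the resolution property, \cite{tot04} (or the discussion preceding Lemma \ref{3.3}) identifies it as a global quotient stack in the classical sense.

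\textbf{The main obstacle} I anticipate is keeping the gerbe and rigidification bookkeeping clean: one must check that the generic tameness hypothesis genuinely forces the relevant gerbe to be \emph{étale} (not merely flat) over the dense open, and that this suffices to apply Lemma \ref{A.1}, whose statement is about étale gerbes over the \emph{whole} stack. If the gerbe is only étale over a dense open, one needs an additional argument --- perhaps a Noetherian induction on the complement, or the observation that for a separated Deligne--Mumford stack the inertia is finite and unramified everywhere, so the gerbe $\mathcal{X}\to\mathcal{X}^{\mathrm{rig}}$ is automatically étale globally when $\mathcal{X}$ is generically tame and smooth, because the order of the generic stabilizer is invertible and the stabilizer group scheme is étale-locally constant on the smooth locus. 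Making this last point rigorous, rather than the descent of the resolution property (which is routine given \cite{gro17}), is where the real care is required.
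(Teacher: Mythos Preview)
Your overall architecture matches the paper's: factor $\mathcal{X}\to\overline{\mathcal{X}}\to X$ through the rigidification (the paper cites \cite[Proposition~2.1]{Ols07}, which furnishes this globally as an \'etale gerbe over a smooth separated Deligne--Mumford stack with generically trivial stabiliser), observe that $\overline{\mathcal{X}}$ is a quotient stack via \cite{EHKV01}, apply Lemma~\ref{A.1}, and reduce to an \'etale gerbe $\mathcal{W}$ over a scheme $Z$ carrying an ample line bundle.

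The genuine gap, however, is not where you locate it. The rigidification of a Deligne--Mumford stack is always an \'etale gerbe, since the inertia is unramified by definition; no tameness is needed for that step, and the paper simply quotes \cite{Ols07}. Generic tameness is instead used to arrange that $\overline{\mathcal{X}}$ is tame (so that \cite[Theorem~2.18]{EHKV01} applies) and, more importantly, at the step you treat as routine: showing that $\mathcal{W}$ has the resolution property. Gabber's theorem $\operatorname{Br}=\operatorname{Br}'$ concerns $\mathbb{G}_m$-gerbes, whereas $\mathcal{W}$ is banded by a finite, possibly nonabelian, \'etale group scheme, so there is no Brauer class to invoke and no twisted locally free sheaf falls out automatically. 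The paper does substantive work at exactly this point: following \cite{BHM21} it reduces to a band of the form $\prod_j \mathbb{Z}/p_j^{m_j}\mathbb{Z}$; tameness then forces each $p_j$ to be invertible on $S$; a further finite \'etale cover adjoins the relevant roots of unity, identifying $\mathbb{Z}/p_j^{m_j}\mathbb{Z}\cong\mu_{p_j^{m_j}}$; and only then do \cite[Theorem~3.6]{EHKV01} and \cite[Proposition~16]{Mat21} yield the resolution property. This $\mu_n$-reduction is precisely where the generic-tameness hypothesis is consumed, and your proposal elides it.
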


\begin{proof}
    First of all, we may assume $\mathcal{X}$ is connected. Let $\pi\colon\mathcal{X}\to X$ be the coarse moduli space associated with $\mathcal{X}$. By \cite[Proposition\ 2.1]{Ols07}, $\pi$ admits a factorization$$\mathcal{X}\overset{\alpha}{\longrightarrow}\overline{\mathcal{X}}\overset{\overline{\pi}}{\longrightarrow}X,$$where $\overline{\mathcal{X}}$ is a smooth separated Deligne--Mumford stack with generically trivial stabilizer, $\alpha$ is a \'etale gerbe, and $\overline{\pi}$ is the coarse moduli space associate with $\overline{\mathcal{X}}$. Since $\mathcal{X}$ is generically tame, it can be arranged that $\overline{\mathcal{X}}$ is tame. By \cite[Theorem\ 2.18]{EHKV01}, we see that $\overline{\mathcal{X}}$ is a quotient stack. By Lemma \ref{A.1}, we may assume that $\mathcal{X}$ is a tame Deligne--Mumford gerbe over $X$. By the proof of \cite[Theorem\ 6.2]{BHM21}, we may assume that $\mathcal{X}$ is banded by the finite constant group scheme associated to$$\mathbb{Z}/p_{1}^{m_{1}}\mathbb{Z}\times_{X}\cdots\times_{X}\mathbb{Z}/p_{n}^{m_{n}}\mathbb{Z}.$$Since $\mathcal{X}$ is tame, we see that $p_{j}$ is invertible in $S$ for all $j$. Again by \cite[Proposition\ 5.3(\romannum{7})]{gro17}, we may replace $S$ by a finite \'etale covering of itself that contains all the $p_{j}^{m_{j}}$th roots of unity for each $j$. In this case, we have $\mathbb{Z}/p_{j}^{m_{j}}\mathbb{Z}\cong\mu_{p_{j}^{m_{j}}}$ as group schemes over $S$. By \cite[Theorem\ 3.6]{EHKV01} and \cite[Proposition\ 5.3(\romannum{4})]{gro17}, we see that $\mathcal{X}$ is a quotient stack. By \cite[Proposition\ 16]{Mat21}, we see that the coarse moduli map $\mathcal{X}\to X$ has the resolution property. Since $X$ is quasiprojective, it also has the resolution property. Hence $\mathcal{X}$ has the resolution property. Since $\mathcal{X}$ is smooth with the resolution property, it is a quotient stack by \cite[Theorem\ 1.1]{tot04}.
\end{proof}

\bibliographystyle{amsalpha}
\bibliography{relative.bib}

\providecommand{\bysame}{\leavevmode\hbox to3em{\hrulefill}\thinspace}
\providecommand{\MR}{\relax\ifhmode\unskip\space\fi MR }
\providecommand{\MRhref}[2]{%
  \href{http://www.ams.org/mathscinet-getitem?mr=#1}{#2}
}
\providecommand{\href}[2]{#2}
\begin{thebibliography}{EHKV01}

\bibitem[AOV08]{AOV08}
Dan Abramovich, Martin Olsson, and Angelo Vistoli, \emph{Tame stacks in positive characteristic}, Ann. Inst. Fourier (Grenoble) \textbf{58} (2008), no.~4, 1057--1091. \MR{2427954}

\bibitem[{Bal}09]{Bal09}
Matthew~Robert {Ballard}, \emph{{Equivalences of derived categories of sheaves on quasi-projective schemes}}, arXiv e-prints (2009), arXiv:0905.3148.

\bibitem[BHM21]{BHM21}
Daniel {Bragg}, Jack {Hall}, and Siddharth {Mathur}, \emph{{Unipotent morphisms}}, arXiv e-prints (2021), arXiv:2110.15041.

\bibitem[BvdB03]{BvdB03}
A.~Bondal and M.~van~den Bergh, \emph{Generators and representability of functors in commutative and noncommutative geometry}, Mosc. Math. J. \textbf{3} (2003), no.~1, 1--36, 258. \MR{1996800}

\bibitem[BZFN10]{BZFN10}
David Ben-Zvi, John Francis, and David Nadler, \emph{Integral transforms and {D}rinfeld centers in derived algebraic geometry}, J. Amer. Math. Soc. \textbf{23} (2010), no.~4, 909--966. \MR{2669705}

\bibitem[Cal18]{Cal18}
John Calabrese, \emph{Relative singular twisted {B}ondal-{O}rlov}, Math. Res. Lett. \textbf{25} (2018), no.~2, 393--414. \MR{3826827}

\bibitem[CS07]{CS07}
Alberto Canonaco and Paolo Stellari, \emph{Twisted {F}ourier-{M}ukai functors}, Adv. Math. \textbf{212} (2007), no.~2, 484--503. \MR{2329310}

\bibitem[CS14]{CS14}
\bysame, \emph{Fourier-{M}ukai functors in the supported case}, Compos. Math. \textbf{150} (2014), no.~8, 1349--1383. \MR{3252022}

\bibitem[DHM22]{DHM22}
Neeraj Deshmukh, Amit Hogadi, and Siddharth Mathur, \emph{Quasi-affineness and the 1-resolution property}, Int. Math. Res. Not. IMRN (2022), no.~3, 2224--2249. \MR{4373233}

\bibitem[EHKV01]{EHKV01}
Dan Edidin, Brendan Hassett, Andrew Kresch, and Angelo Vistoli, \emph{Brauer groups and quotient stacks}, Amer. J. Math. \textbf{123} (2001), no.~4, 761--777. \MR{1844577}

\bibitem[Gro17]{gro17}
Philipp Gross, \emph{Tensor generators on schemes and stacks}, Algebr. Geom. \textbf{4} (2017), no.~4, 501--522. \MR{3683505}

\bibitem[Hal14]{Hal14}
Jack Hall, \emph{Cohomology and base change for algebraic stacks}, Math. Z. \textbf{278} (2014), no.~1-2, 401--429. \MR{3267585}

\bibitem[HP24]{HP24}
Jack {Hall} and Kyle {Priver}, \emph{{A generalized Bondal-Orlov full faithfulness criterion for Deligne-Mumford stacks}}, arXiv e-prints (2024), arXiv:2405.06229.

\bibitem[HR15]{HR15}
Jack Hall and David Rydh, \emph{Algebraic groups and compact generation of their derived categories of representations}, Indiana Univ. Math. J. \textbf{64} (2015), no.~6, 1903--1923. \MR{3436239}

\bibitem[Huy06]{Huy06}
D.~Huybrechts, \emph{Fourier-{M}ukai transforms in algebraic geometry}, Oxford Mathematical Monographs, The Clarendon Press, Oxford University Press, Oxford, 2006. \MR{2244106}

\bibitem[Kaw04]{Kaw04}
Yujiro Kawamata, \emph{Equivalences of derived categories of sheaves on smooth stacks}, Amer. J. Math. \textbf{126} (2004), no.~5, 1057--1083. \MR{2089082}

\bibitem[Kre09]{kre09}
Andrew Kresch, \emph{On the geometry of {D}eligne-{M}umford stacks}, Algebraic geometry---{S}eattle 2005. {P}art 1, Proc. Sympos. Pure Math., vol. 80, Part 1, Amer. Math. Soc., Providence, RI, 2009, pp.~259--271. \MR{2483938}

\bibitem[KV04]{KV04}
Andrew Kresch and Angelo Vistoli, \emph{On coverings of {D}eligne-{M}umford stacks and surjectivity of the {B}rauer map}, Bull. London Math. Soc. \textbf{36} (2004), no.~2, 188--192. \MR{2026412}

\bibitem[LO10]{LO10}
Valery~A. Lunts and Dmitri~O. Orlov, \emph{Uniqueness of enhancement for triangulated categories}, J. Amer. Math. Soc. \textbf{23} (2010), no.~3, 853--908. \MR{2629991}

\bibitem[Mat21]{Mat21}
Siddharth Mathur, \emph{The resolution property via {A}zumaya algebras}, J. Reine Angew. Math. \textbf{774} (2021), 93--126. \MR{4250478}

\bibitem[Ola24]{Ola24}
Noah Olander, \emph{Orlov's {T}heorem in the smooth proper case}, J. Algebra \textbf{643} (2024), 284--293. \MR{4690545}

\bibitem[Ols07]{Ols07}
Martin Olsson, \emph{A boundedness theorem for {H}om-stacks}, Math. Res. Lett. \textbf{14} (2007), no.~6, 1009--1021. \MR{2357471}

\bibitem[Orl97]{Orl97}
D.~O. Orlov, \emph{Equivalences of derived categories and {$K3$} surfaces}, vol.~84, 1997, Algebraic geometry, 7, pp.~1361--1381. \MR{1465519}

\bibitem[OS03]{OS03}
Martin Olsson and Jason Starr, \emph{Quot functors for {D}eligne-{M}umford stacks}, vol.~31, 2003, Special issue in honor of Steven L. Kleiman, pp.~4069--4096. \MR{2007396}

\bibitem[{Pen}24]{Pen24}
Fei {Peng}, \emph{{Equivalences of derived categories of sheaves on tame stacks}}, arXiv e-prints (2024), arXiv:2405.19676.

\bibitem[{Sta}24]{stacks-project}
The {Stacks project authors}, \emph{The stacks project}, \url{https://stacks.math.columbia.edu}, 2024.

\bibitem[Tot04]{tot04}
Burt Totaro, \emph{The resolution property for schemes and stacks}, J. Reine Angew. Math. \textbf{577} (2004), 1--22. \MR{2108211}

\end{thebibliography}

\end{document}